\newcommand{\X}{X_{p_0,\delta}(T)}
 \newtheorem{theorem}{Theorem}[section]
 \newtheorem{definition}[theorem]{Definition}
 \newtheorem{lemma}[theorem]{Lemma}
 \newtheorem{remark}[theorem]{Remark}
 \newtheorem{cor}[theorem]{Corollary}
 \newtheorem{pro}[theorem]{Proposition}
\title{ Uniqueness Results for Weak Leray-Hopf Solutions of the Navier-Stokes System with Initial Values in Critical Spaces}
\author{T Barker
  \footnote{OxPDE, Mathematical Institute, University of Oxford, Oxford, UK. \\Email address: \texttt{tobiasbarker5@gmail.com}; }}
\date{ \today}
\begin{document}
\maketitle 
\begin{abstract} The main subject  of this paper concerns the establishment of certain classes of initial data, which grant short time uniqueness of the associated weak Leray-Hopf solutions of the three dimensional Navier-Stokes equations. In particular, our main theorem that this holds for any solenodial initial data, with finite $L_2(\mathbb{R}^3)$ norm, that also belongs to
to certain subsets of $VMO^{-1}(\mathbb{R}^3)$. As a corollary of this, we obtain the same conclusion for any solenodial $u_{0}$ belonging to $L_{2}(\mathbb{R}^3)\cap \mathbb{\dot{B}}^{-1+\frac{3}{p}}_{p,\infty}(\mathbb{R}^3)$, for any $3<p<\infty$. Here, $\mathbb{\dot{B}}^{-1+\frac{3}{p}}_{p,\infty}(\mathbb{R}^3)$ denotes the closure of test functions in the critical Besov space ${\dot{B}}^{-1+\frac{3}{p}}_{p,\infty}(\mathbb{R}^3)$.
Our results rely on the establishment  of certain continuity properties near the initial time, for weak Leray-Hopf solutions of the Navier-Stokes equations, with these classes of initial data. Such properties seem to be of independent interest.
Consequently, we are also able to show if a weak Leray-Hopf solution $u$ satisfies certain  extensions of the Prodi-Serrin  condition on $\mathbb{R}^3 \times ]0,T[$, then it is unique on $\mathbb{R}^3 \times ]0,T[$ amongst all other weak Leray-Hopf solutions with the same initial value. In particular, we show this is the case if $u\in L^{q,s}(0,T; L^{p,s}(\mathbb{R}^3))$ or if it's $L^{q,\infty}(0,T; L^{p,\infty}(\mathbb{R}^3))$ norm is sufficiently small, where $3<p< \infty$, $1\leq s<\infty$ and $3/p+2/q=1$.
\end{abstract}
\setcounter{equation}{0}
\setcounter{equation}{0}
\section{Introduction}

This paper concerns  the Cauchy problem for the Navier-Stokes system in the space-time domain $Q_{\infty}:=\mathbb{R}^3\times ]0,\infty[$ for vector-valued function $v=(v_1,v_2,v_3)=(v_i)$ and scalar function $p$, satisfying the equations
\begin{equation}\label{directsystem}
\partial_tv+v\cdot\nabla v-\Delta v=-\nabla p,\qquad\mbox{div}\,v=0
\end{equation}
in $Q_{\infty}$,
with the initial conditions
\begin{equation}\label{directic}
v(\cdot,0)=u_0(\cdot).\end{equation}
This paper will concern a certain class of solutions to 
(\ref{directsystem})-(\ref{directic}), which we will call weak Leray-Hopf solutions. Before defining this class, we introduce some necessary notation. 
 Let $J(\mathbb{R}^3)$ be the closure of $$C^{\infty}_{0,0}(\mathbb{R}^3):=\{u\in C_{0}^{\infty}(\mathbb{R}^3): \rm{div}\,\,u=0\}$$
 with respect to the $L_{2}(\mathbb{R}^3)$ norm.
 Moreover, $\stackrel{\circ}J{^1_2}(\mathbb{R}^3)$ is defined the completion of the space
$C^\infty_{0,0}(\mathbb{R}^3)$
with respect to $L_2$-norm and the Dirichlet integral
$$\Big(\int\limits_{\mathbb{R}^3} |\nabla v|^2dx\Big)^\frac 12 .$$

 Let us now define the notion of 'weak Leray-Hopf solutions' to the Navier-Stokes system.
 \begin{definition}\label{weakLerayHopf}
Consider $0<S\leq \infty$. Let 
 \begin{equation}\label{initialdatacondition}
 u_0\in J(\mathbb{R}^3).
 \end{equation}
 We say that $v$ is a 'weak Leray-Hopf solution' to the Navier-Stokes Cauchy problem in $Q_S:=\mathbb{R}^3 \times ]0,S[$ if it satisfies the following properties:
\begin{equation}
v\in \mathcal{L}(S):= L_{\infty}(0,S; J(\mathbb{R}^3))\cap L_{2}(0,S;\stackrel{\circ} J{^1_2}(\mathbb{R}^3)).
\end{equation} 
Additionally,  for any $w\in L_{2}(\mathbb{R}^3)$:
\begin{equation}\label{vweakcontinuity}
t\rightarrow \int\limits_{\mathbb R^3} w(x)\cdot v(x,t)dx
\end{equation}
is a continuous function on  $[0,S]$ (the semi-open interval should be taken if $S=\infty$).
The Navier-Stokes equations are satisfied by $v$ in a weak sense:
\begin{equation}\label{vsatisfiesNSE}
\int\limits_{0}^{S}\int\limits_{\mathbb{R}^3}(v\cdot \partial_{t} w+v\otimes v:\nabla w-\nabla v:\nabla w) dxdt=0
\end{equation}
for any divergent free test function $$w\in C_{0,0}^{\infty}(Q_{S}):=\{ \varphi\in C_{0}^{\infty}(Q_{S}):\,\,\rm{div}\,\varphi=0\}.$$
The initial condition is satisfied strongly in the $L_{2}(\mathbb{R}^3)$ sense:
\begin{equation}\label{vinitialcondition}
\lim_{t\rightarrow 0^{+}} \|v(\cdot,t)-u_0\|_{L_{2}(\mathbb{R}^3)}=0.
\end{equation}

Finally, $v$ satisfies the energy inequality:
\begin{equation}\label{venergyineq}
\|v(\cdot,t)\|_{L_{2}(\mathbb{R}^3)}^2+2\int\limits_{0}^t\int\limits_{\mathbb{R}^3} |\nabla v(x,t')|^2 dxdt'\leq \|u_0\|_{L_{2}(\mathbb{R}^3)}^2 
\end{equation}
for all $t\in [0,S]$ (the semi-open interval should be taken if $S=\infty$).
\end{definition}
The corresponding global in time existence result, proven in \cite{Le}, is as follows.
\begin{theorem}
Let $u_0\in J(\mathbb{R}^3)$. Then, there exists at least one weak Leray-Hopf solution on $Q_{\infty}$.
\end{theorem}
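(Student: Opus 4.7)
The plan is to follow Leray's original strategy by constructing approximate solutions, extracting compactness via uniform a priori estimates, and then passing to the limit. I would use a mollification of the transport term (one could equally well use a Galerkin projection onto finite-dimensional subspaces of $C^\infty_{0,0}(\mathbb{R}^3)$). Fix a standard mollifier $\rho_\varepsilon$ and, for each $\varepsilon>0$, consider the modified Cauchy problem
\begin{equation*}
\partial_t v^\varepsilon + (\rho_\varepsilon * v^\varepsilon)\cdot\nabla v^\varepsilon - \Delta v^\varepsilon = -\nabla p^\varepsilon, \quad \operatorname{div} v^\varepsilon = 0, \quad v^\varepsilon(\cdot,0)=u_0.
\end{equation*}
Since the transport velocity $\rho_\varepsilon * v^\varepsilon$ is smooth and bounded, this regularized system admits a global smooth solution by a standard fixed-point/continuation argument applied in a suitable Banach space (for instance $C([0,T];J(\mathbb{R}^3))\cap L^2(0,T;\stackrel{\circ}{J}{}^1_2(\mathbb{R}^3))$).

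Next I would derive a priori estimates that are uniform in $\varepsilon$. Testing the regularized equation against $v^\varepsilon$ and using $\operatorname{div}(\rho_\varepsilon * v^\varepsilon)=0$ to kill the convective term gives the exact energy identity
\begin{equation*}
\tfrac12 \|v^\varepsilon(\cdot,t)\|_{L_2}^2 + \int_0^t\!\!\int_{\mathbb{R}^3}|\nabla v^\varepsilon|^2\,dxdt' = \tfrac12\|u_0\|_{L_2}^2.
\end{equation*}
This provides a uniform bound on $v^\varepsilon$ in $\mathcal{L}(S)$ for every $S<\infty$. To upgrade this weak compactness to strong compactness on compact sets in space-time (which one needs to handle the nonlinear term $v^\varepsilon\otimes v^\varepsilon$), I would bound $\partial_t v^\varepsilon$ in a negative-order Sobolev space by rewriting the equation as $\partial_t v^\varepsilon = \mathbb{P}\bigl(\Delta v^\varepsilon - \operatorname{div}((\rho_\varepsilon*v^\varepsilon)\otimes v^\varepsilon)\bigr)$, where $\mathbb{P}$ is the Helmholtz projector. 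An Aubin-Lions type lemma applied on each ball $B_R\subset\mathbb{R}^3$, combined with a diagonal extraction, then yields a subsequence $v^\varepsilon\to v$ strongly in $L^2_{\mathrm{loc}}(Q_\infty)$ and weakly in $\mathcal{L}(S)$.

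With this convergence, passing to the limit in the weak formulation is routine: the linear terms pass by weak convergence, and $v^\varepsilon\otimes v^\varepsilon \to v\otimes v$ in $L^1_{\mathrm{loc}}$ from the local strong convergence, so (\ref{vsatisfiesNSE}) holds for $v$. The energy inequality (\ref{venergyineq}) follows from the energy identity for $v^\varepsilon$ by the lower semicontinuity of the $L^2_t L^2_x$ norm of $\nabla v^\varepsilon$ and of $\|v^\varepsilon(\cdot,t)\|_{L_2}$ under weak convergence; a density/approximation argument establishes that the inequality holds for all $t\in[0,S]$. Weak continuity (\ref{vweakcontinuity}) follows because $\int w\cdot v^\varepsilon\,dx$ is equicontinuous in $t$ (from the bound on $\partial_t v^\varepsilon$ tested against $w$, which can be approximated by divergence-free test functions using the Helmholtz decomposition) and uniformly bounded, so Arzelà-Ascoli transfers continuity to $v$.

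The main obstacle is the strong initial condition (\ref{vinitialcondition}). Weak continuity plus the energy inequality give $\limsup_{t\to 0^+}\|v(\cdot,t)\|_{L_2}\le \|u_0\|_{L_2}$, while weak continuity at $t=0$ (which one must check by evaluating the weak formulation on a test function concentrating near $t=0$ and using $v^\varepsilon(\cdot,0)=u_0$) gives $\|u_0\|_{L_2}\le\liminf_{t\to 0^+}\|v(\cdot,t)\|_{L_2}$; combining norm convergence with weak convergence in the Hilbert space $L_2(\mathbb{R}^3)$ yields strong convergence, which is exactly (\ref{vinitialcondition}). This same Hilbert-space argument will reappear in the subsequent uniqueness analysis, so it is worth carrying out carefully.
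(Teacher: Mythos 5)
Your proposal is correct and is essentially the classical Leray construction (mollified convective term, uniform energy identity, Aubin--Lions compactness on balls plus diagonal extraction, and the weak-convergence-plus-norm-convergence argument in $L_2$ for the strong attainment of the initial data), which is exactly the proof the paper invokes by citing Leray's 1934 paper rather than reproving the theorem. The only cosmetic slip is that the nonlinear term in your regularized system converges as $(\rho_\varepsilon * v^\varepsilon)\otimes v^\varepsilon \to v\otimes v$, not $v^\varepsilon\otimes v^\varepsilon$, but this changes nothing in the argument.
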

There are two big open problems concerning weak Leray-Hopf solutions.
\begin{enumerate}
\item
(Regularity)\footnote{This is closely related to one of the Millenium problems, see \cite{fefferman}.} Given any initial data $u_0\in J(\mathbb{R}^3)$, is there a global in time weak Leray-Hopf solution that is regular for all times \footnote{By regular for all time, we mean $C^{\infty}(\mathbb{R}^3 \times ]0,\infty[)$ with every space-time derivative in $L_{\infty}(\epsilon,T; L_{\infty}(\mathbb{R}^3))$ for any $0<\epsilon<T<\infty$.}?
\item
(Uniqueness) Given any initial data $u_0\in J(\mathbb{R}^3)$, is the associated global in time weak Leray-Hopf solution unique in the class of weak Leray-Hopf solutions?
\end{enumerate}
Under certain restrictions of the initial data, it is known since \cite{Le} that 1)(Regularity) implies 2)(Uniqueness)\footnote{The connection made there concerns the slightly narrower class of 'turbulent solutions' defined by Leray in \cite{Le}.}. However, this implication may not be valid for more general classes of initial data. Indeed, certain unverified non uniqueness scenarios, for weak Leray-Hopf solutions, have recently been suggested in \cite{jiasverak2015}. In the scenario suggested there, the non unique solutions are regular. This paper is concerned with the following very natural question arising from 2)(Uniqueness).
\begin{itemize}
\item[]
\textbf{(Q) Which $\mathcal{Z}\subset \mathcal{S}^{'}(\mathbb{R}^3)$ are such that $u_0 \in J(\mathbb{R}^3)\cap\mathcal{Z}$ implies uniqueness of the associated  weak Leray-Hopf solutions on some time interval?}
\end{itemize}
There are a vast number of papers related to this question. We now give some incomplete references, which are directly concerned with this question and closely related to this paper. It was proven in \cite{Le} that for $\mathcal{Z}=\stackrel{\circ} J{^1_2}(\mathbb{R}^3)$ and $\mathcal{Z}= L_{p}(\mathbb{R}^3)$ ($3<p\leq \infty$), we have short time uniqueness in the slightly narrower class of 'turbulent solutions'. The same conclusion was shown to hold in \cite{FJR} for the weak Leray-Hopf class. It was later shown in \cite{Kato} that $\mathcal{Z}= L_{3}(\mathbb{R}^3)$ was sufficient for short time uniqueness of weak Leray-Hopf solutions. At the start of the 21st Century, \cite{GP2000} provided a positive answer for question \textbf{(Q)} for the homogeneous Besov spaces $$\mathcal{Z}= \dot{B}_{p,q}^{-1+\frac{3}{p}}(\mathbb{R}^3)$$ with $p, q<\infty$ and $$\frac{3}{p}+\frac{2}{q}\geq 1.$$
An incomplete selection of further results in this direction are \cite{chemin}, \cite{dongzhang}-\cite{dubois}, \cite{germain} and \cite{LR1}, for example. A more complete history regarding question \textbf{(Q)} can be found in \cite{germain}.

 An approach (which we will refer to as approach 1) to determining $\mathcal{Z}$ such that \textbf{(Q)} is true, was first used for the Navier-Stokes equation in \cite{Le} and is frequently found in the literature. 
 The principle aim of approach 1 is to show for certain $\mathcal{Z}$ and $u_0\in\mathcal{Z}\cap J(\mathbb{R}^3)$, one can construct a weak Leray Hopf solution $V(u_0)$ belonging to a path space $\mathcal{X}_{T}$ having certain features. Specifically, $\mathcal{X}_{T}$ has the property that any weak Leray-Hopf solution (with arbitrary $u_0\in J(\mathbb{R}^3)$ initial data) in $\mathcal{X}_{T}$ is unique amongst all weak Leray-Hopf solutions with the same initial data.


A crucial step in approach 1 is the establishment of appropriate estimates of the trilinear form $F:\mathcal{L}(T)\times\mathcal{L}(T)\times \mathcal{X}_{T}\times ]0,T[\rightarrow \mathbb{R}$ given by:
\begin{equation}\label{trilinearformweakstrong}
F(a,b,c,t):=\int_{0}^{t}\int\limits_{\mathbb{R}^3} (a\otimes c):\nabla  b dyd\tau.
\end{equation} 
As mentioned in \cite{germain}, these estimates of this trilinear form typically play two roles. The first is to provide rigorous justification of the energy inequality for $w:= V(u_0)-u(u_0)$, where $u(u_0)$ is another weak Leray-Hopf solution with the same initial data. The second is to allow the applicability of Gronwall's lemma to infer $w\equiv 0$ on $Q_{T}$. 

The estimates of the trilinear form needed for approach 1 appear to be  restrictive, with regards to  the spaces $\mathcal{Z}$ and $\mathcal{X}_{T}$ that can be considered. Consequently, \textbf{(Q)} has remained open for the Besov spaces $$\mathcal{Z}= \dot{B}_{p,q}^{-1+\frac{3}{p}}(\mathbb{R}^3)$$ with $p\in ]3,\infty[,\,\, q\in [1,\infty[$ and $$\frac{3}{p}+\frac{2}{q}<1.$$
The obstacle of using approach 1 for this case, has been explicitly noted in \cite{GP2000} and \cite{germain}. 
\begin{itemize}
\item[]
'It does not seem possible to improve on the continuity (of the trilinear term) without using in a much deeper way that not only $u$ and $V(u_0)$ are in the Leray class $\mathcal{L}$ but also solutions of the equation.'(\cite{GP2000})
\end{itemize}
For analagous Besov spaces on bounded domains, question \textbf{(Q)} has also been considered recently in \cite{farwiggiga2016}-\cite{farwiggigahsu2016}. There, a restricted version of \textbf{(Q)} is shown to hold. Namely, the authors  prove uniqueness within the subclass of 'well-chosen weak solutions' describing weak Leray-Hopf solutions constructed by concrete approximation procedures. Furthermore, in \cite{farwiggiga2016}-\cite{farwiggigahsu2016} it is explicitly mentioned that a complete answer to \textbf{(Q)} for these cases is 'out of reach'.
  
In this paper, we provide a positive answer to \textbf{(Q)} for $\mathcal{Z}= \mathbb{\dot{B}}^{-1+\frac{3}{p}}_{p,\infty}(\mathbb{R}^3)$, with any $3<p<\infty$ Here, $\mathbb{\dot{B}}^{-1+\frac{3}{p}}_{p,\infty}(\mathbb{R}^3)$ is the closure of smooth compactly supported functions in $\dot{B}^{-1+\frac{3}{p}}_{p,\infty}(\mathbb{R}^3)$ and is such that
$$\dot{B}^{-1+\frac{3}{p}}_{p,p}(\mathbb{R}^3)\hookrightarrow \mathbb{\dot{B}}^{-1+\frac{3}{p}}_{p,\infty}(\mathbb{R}^3).$$
 In fact this is a corollary of our main theorem, which provides a positive answer to \textbf{(Q)} for other classes of $\mathcal{Z}$.
From this point onwards, for $p_0>3$, we will denote $$s_{p_0}:=-1+\frac{3}{p_0}<0.$$
Moreover, for $ 2<\alpha\leq 3$ and $p_{1}>\alpha$, we define $$s_{p_1,\alpha}:= -\frac{3}{\alpha}+\frac{3}{p_1}<0.$$
Now, we state the main theorem of this paper.
\begin{theorem}\label{weakstronguniquenessBesov}
Fix $2<\alpha \leq 3.$ 
\begin{itemize}
\item For $2<\alpha< 3$, take any $p$ such that $\alpha <p< \frac{\alpha}{3-\alpha}$.
\item For $\alpha=3$, take any $p$ such that $3<p<\infty$.
\end{itemize}
Consider a weak Leray-Hopf solution $u$ to the Navier-Stokes system on $Q_{\infty}$, with initial data  $$u_0\in VMO^{-1}(\mathbb{R}^3)\cap \dot{B}^{s_{p,\alpha}}_{p,p}(\mathbb{R}^3)\cap J(\mathbb{R}^3).$$ Then, there exists a $T(u_0)>0$ such that all weak Leray-Hopf solutions on $Q_{\infty}$, with initial data $u_0$, coincide with $u$ on $Q_{T(u_0)}:=\mathbb{R}^3\times ]0,T(u_0)[.$
\end{theorem}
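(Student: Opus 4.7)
The plan is to reduce the theorem to a Lorentz-scale extension of the classical Prodi--Serrin weak-strong uniqueness theorem (the auxiliary uniqueness statement advertised in the abstract). Concretely, I would (i) construct for the given $u_0$ a distinguished weak Leray--Hopf solution $V$ on some interval $[0,T_0]$ lying in $L^{q,s}(0,T_0;L^{p,s}(\mathbb{R}^3))$ for a suitable $s<\infty$ with $3/p+2/q=1$; (ii) prove a weak-strong uniqueness theorem asserting that any weak Leray--Hopf solution in such a Lorentz space is unique amongst weak Leray--Hopf solutions sharing its initial data; and then (iii) combine the two to conclude on a small interval $[0,T(u_0)]$.

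For step (i), the hypothesis $u_0\in VMO^{-1}(\mathbb{R}^3)$ is the scale-critical input of Koch--Tataru's mild-solution construction, which yields $V$ on some $[0,T_0]$. The additional regularity $u_0\in\dot{B}^{s_{p,\alpha}}_{p,p}(\mathbb{R}^3)$ converts, via the heat-kernel characterisation of Besov spaces, into Lorentz-in-time estimates for $\|e^{t\Delta}u_0\|_{L^p}$; the two admissibility ranges $\alpha<p<\alpha/(3-\alpha)$ and (for $\alpha=3$) $3<p<\infty$ are exactly what make $s_{p,\alpha}$ and the companion Prodi--Serrin exponent $q$ mutually consistent with $3/p+2/q=1$. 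A bilinear estimate in $L^{q,s}L^{p,s}$ then controls the Duhamel term after possibly shrinking $T_0$, placing $V$ in the target class. Since $u_0\in J(\mathbb{R}^3)\subset L^2$, an energy computation on $V$ (legitimate given its Lorentz regularity) upgrades the mild solution to a weak Leray--Hopf solution.

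For step (ii), given a competing weak Leray--Hopf $u$ with the same initial data, I would set $w=u-V$ and derive the energy inequality
\[
\tfrac{1}{2}\|w(t)\|_{L^2(\mathbb{R}^3)}^2+\int_0^t\!\int_{\mathbb{R}^3}|\nabla w|^2\,dxd\tau\leq |F(w,w,V,t)|
\]
with $F$ as in (\ref{trilinearformweakstrong}). Rigorous derivation uses $V$ as a test object in the weak formulation of $u$ and vice versa, which is legitimate thanks to the Lorentz regularity of $V$ together with the weak continuity (\ref{vweakcontinuity}). For the trilinear form itself, the quote from \cite{GP2000} reproduced in the introduction warns that on the scale $L^{q,s}L^{p,s}$ with $p>3$ the naive H\"older/Sobolev chain is too lossy to close a Gronwall argument on its own; the remedy is to exploit that $s<\infty$ forces $\|V\|_{L^{q,s}(0,T';L^{p,s})}\to 0$ as $T'\to 0^+$, supplying smallness near the initial time that absorbs the trilinear term into the dissipative term plus a Gronwallable remainder. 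Gronwall's lemma then gives $w\equiv 0$ on some $Q_{T(u_0)}$.

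The hardest ingredient is the Lorentz--Lebesgue trilinear estimate coupled with the rigorous derivation of the above identity for $w$, because $V$ is only weak-Lorentz near $t=0$ and the usual mollification of either solution is delicate in that scale. This is precisely where the ``continuity properties near the initial time'' highlighted in the abstract carry the load: they both guarantee that $V$ is an admissible competitor in the weak formulation of $u$ and supply the small-in-$T'$ factor that beats the lossy trilinear bound. Everything else -- Koch--Tataru existence, bilinear estimates in Lorentz--Lebesgue, and Gronwall -- is more or less standard once this central continuity/smallness input is in hand.
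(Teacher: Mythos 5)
There is a genuine gap, and it sits exactly where the paper says the difficulty lies. Your step (i) does not go through for $2<\alpha<3$: in that range $s_{p,\alpha}=-3/\alpha+3/p$ is strictly smaller than the critical index $-1+3/p$, so $\dot{B}^{s_{p,\alpha}}_{p,p}(\mathbb{R}^3)$ is a \emph{supercritical} space, and the heat-flow characterisation (Remark \ref{besovremark3}) only yields $\|S(t)u_0\|_{L_p}\in L^{r,p}(0,T)$ with $3/p+2/r=3/\alpha>1$. This is strictly weaker near $t=0$ than membership in any $L^{q,s}(0,T)$ with $3/p+2/q=1$, so neither $S(t)u_0$ nor the Duhamel iterate lands in your target class. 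Even for $\alpha=3$, where one does get $\|S(t)u_0\|_{L_p}\in L^{q,p}(0,T)$ at the critical scale, the second Lorentz index is $s=p$, and for $p>5$ one has $s>q$; then the Lorentz--H\"older estimate of the trilinear form against the energy norms of $w$ fails on exponents (interpolation of the energy class puts $\|w\|_{L_2}^{1-3/p}\|\nabla w\|_{L_2}^{1+3/p}$ only in $L^{q',q'}(0,T)$, and pairing with $L^{q,s}$ requires $s\leq q$). Smallness of $\|V\|_{L^{q,s}(0,T')}$ as $T'\to0$ cannot repair an estimate that is not bounded in the first place --- this is precisely the Gallagher--Planchon obstruction quoted in the introduction, and your step (ii) restates it rather than resolving it. Note also that the paper derives Proposition \ref{extendladyzhenskayaserrinprodi} \emph{from} Theorem \ref{weakstronguniquenessBesov} (via Remark \ref{remarkcannoneweakstrong}), so resting the theorem on a Lorentz Prodi--Serrin criterion inverts the paper's logical order.

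The missing idea is that the extra information must be extracted from the arbitrary competitor $u$, not only from the constructed solution $V$. The paper splits $u_0=\bar u^{1,\epsilon}+\bar u^{2,\epsilon}$ by a Calder\'on-type truncation of the Littlewood--Paley blocks (Corollary \ref{Decomp}), solves with the subcritical part $\bar u^{1,\epsilon}$ (Theorem \ref{regularity}), and compares with $u$ to prove $\|u(\cdot,t)-S(t)u_0\|_{L_2}^2\leq ct^{\beta}$ for \emph{every} weak Leray--Hopf solution $u$ with this data (Lemma \ref{estnearinitialforLeraywithbesov}); this is the ``deeper use of the fact that $u$ is a solution'' that the quoted passage calls for. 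The strong solution is then taken in the Koch--Tataru space $\mathcal{E}_{T}$, so that $\sqrt{t}\|v(\cdot,t)\|_{L_\infty}$ is small --- the weak-$L^2$-in-time endpoint $(p,q)=(\infty,2)$ of Prodi--Serrin rather than a finite-$p$ Lorentz class --- and the Gronwall step is run on the weighted quantity $\sup_{0<\tau<T}\tau^{-\beta}\|w(\cdot,\tau)\|_{L_2}^2$, where the factor $t^{\beta}$ from Lemma \ref{estnearinitialforLeraywithbesov} is exactly what renders $\int_0^t\tau^{-1}\|w(\cdot,\tau)\|_{L_2}^2\,d\tau$ finite (see (\ref{wmainest1bmo-1})--(\ref{wmainest2bmo-1})). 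Without this quantitative decay of $w$ at $t=0$, which your proposal does not supply, the argument cannot be closed for the full range of $p$ and $\alpha$ in the theorem.
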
 
Let us remark that previous results of this type are given in \cite{chemin} and \cite{dongzhang} respectively, with the additional assumption that $u_0$ belongs to a nonhomogeneous Sobolev space $H^{s}(\mathbb{R}^3)$, with $s>0$.
 By comparison the assumptions of Theorem \ref{weakstronguniquenessBesov} are weaker.
 This follows because of the following embeddings. For $s>0$ there exists $2<\alpha\leq 3$ such that for $p\geq \alpha$:
 $${H}^{s}(\mathbb{R}^3)\hookrightarrow L_{\alpha}(\mathbb{R}^3)\hookrightarrow {\dot{B}^{s_{p,\alpha}}}_{p,p}(\mathbb{R}^3). $$
 \begin{cor}\label{cannoneweakstronguniqueness}
 Let $3<p<\infty$.
 Consider a weak Leray-Hopf solution $u$ to the Navier-Stokes system on $Q_{\infty}$, with initial data  $$u_0\in \dot{\mathbb{B}}_{p,\infty}^{s_{p}}(\mathbb{R}^3)\cap J(\mathbb{R}^3).$$ Then, there exists a $T(u_0)>0$ such that all weak Leray-Hopf solutions on $Q_{\infty}$, with initial data $u_0$, coincide with $u$ on $Q_{T(u_0)}:=\mathbb{R}^3\times ]0,T(u_0)[.$
 \end{cor}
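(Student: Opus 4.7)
The plan is to reduce Corollary \ref{cannoneweakstronguniqueness} to Theorem \ref{weakstronguniquenessBesov} by exhibiting, for every $3<p<\infty$, a choice of parameters $(\alpha,p_1)$ satisfying the first bullet of the theorem such that
$$\dot{\mathbb{B}}_{p,\infty}^{s_p}(\mathbb{R}^3)\hookrightarrow VMO^{-1}(\mathbb{R}^3)\cap \dot{B}^{s_{p_1,\alpha}}_{p_1,p_1}(\mathbb{R}^3).$$
Combined with the assumption $u_0\in J(\mathbb{R}^3)$, this places the initial data in the hypothesis class of Theorem \ref{weakstronguniquenessBesov}, and the conclusion is immediate.

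For the Besov component, I would set $p_1=p$ and choose $\alpha\in(2,3)$ sufficiently close to $3$ that $p<\alpha/(3-\alpha)$, equivalently $\alpha>3p/(1+p)$. Since $3p/(1+p)<3$, such $\alpha$ exists; the condition $\alpha<p_1<\alpha/(3-\alpha)$ in the theorem is then satisfied. The source space has differential dimension $s_p-3/p=-1$, strictly greater than the target's differential dimension $s_{p_1,\alpha}-3/p_1=-3/\alpha$ (because $\alpha<3$). With matching integrability index $p=p_1$ and a strict drop in smoothness, the standard Besov embedding yields $\dot{B}^{s_p}_{p,\infty}(\mathbb{R}^3)\hookrightarrow \dot{B}^{s_{p_1,\alpha}}_{p_1,p_1}(\mathbb{R}^3)$ with no restriction on the third indices, and restricting to $\dot{\mathbb{B}}^{s_p}_{p,\infty}$ delivers the Besov inclusion.

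For the $VMO^{-1}$ component, I would invoke the well-known continuous embedding $\dot{B}^{s_p}_{p,\infty}(\mathbb{R}^3)\hookrightarrow BMO^{-1}(\mathbb{R}^3)$ valid for $p<\infty$ (essentially part of the Koch-Tataru setup). Given $u_0\in\dot{\mathbb{B}}^{s_p}_{p,\infty}$, there is a sequence of test functions converging to $u_0$ in $\dot{B}^{s_p}_{p,\infty}$; by continuity of the above embedding, they also converge to $u_0$ in $BMO^{-1}$, and since test functions lie in $VMO^{-1}$ and $VMO^{-1}$ (as the closure of test functions in $BMO^{-1}$) is closed in $BMO^{-1}$, we deduce $u_0\in VMO^{-1}(\mathbb{R}^3)$.

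The only delicate point is the insistence on $\alpha<3$ strictly, which is essential: the endpoint $\alpha=3$ collapses the differential-dimension gap and would require $q_0\leq q_1$, incompatible with $q_0=\infty$ and $q_1=p_1<\infty$. Once this parameter selection is in place, the classical embeddings assemble as described and the corollary follows as a direct application of Theorem \ref{weakstronguniquenessBesov}.
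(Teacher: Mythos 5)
Your overall strategy (reduce to Theorem \ref{weakstronguniquenessBesov} by a suitable choice of $\alpha$, plus the density argument for $VMO^{-1}$) matches the paper, but the Besov inclusion step contains a genuine error. There is no continuous embedding $\dot{B}^{s_p}_{p,\infty}(\mathbb{R}^3)\hookrightarrow \dot{B}^{s_{p,\alpha}}_{p,p}(\mathbb{R}^3)$: for \emph{homogeneous} Besov spaces, a ``strict drop in smoothness at fixed integrability'' is not an embedding, and a strict gap between differential dimensions rules one out by scaling. Indeed, since $\|u(\lambda\,\cdot)\|_{\dot{B}^{s}_{p,q}}=\lambda^{s-3/p}\|u\|_{\dot{B}^{s}_{p,q}}$, an inequality $\|u\|_{\dot{B}^{s_{p,\alpha}}_{p,p}}\leq C\|u\|_{\dot{B}^{s_p}_{p,\infty}}$ applied to $u(\lambda\,\cdot)$ yields $\|u\|_{\dot{B}^{s_{p,\alpha}}_{p,p}}\leq C\lambda^{s_p-s_{p,\alpha}}\|u\|_{\dot{B}^{s_p}_{p,\infty}}$ with $s_p-s_{p,\alpha}=-1+3/\alpha>0$, and letting $\lambda\to 0$ forces the left-hand side to vanish for every $u$. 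Concretely, the bound $\sup_j 2^{js_p}\|\dot{\Delta}_j u\|_{L_p}<\infty$ gives no control of $\sum_j 2^{js_{p,\alpha}p}\|\dot{\Delta}_j u\|_{L_p}^p$ as $j\to-\infty$, precisely because $s_{p,\alpha}<s_p$; the claimed ``standard Besov embedding with no restriction on the third indices'' is a statement about inhomogeneous spaces and does not transfer.

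The missing ingredient is the hypothesis $u_0\in J(\mathbb{R}^3)\subset L_2(\mathbb{R}^3)$, which is exactly what controls the low frequencies. The paper's proof uses $L_2(\mathbb{R}^3)\hookrightarrow\dot{B}^{0}_{2,\infty}(\mathbb{R}^3)\hookrightarrow\dot{B}^{-\frac32+\frac3p}_{p,\infty}(\mathbb{R}^3)$ (Remark \ref{besovremark2}) and then interpolates via Proposition \ref{interpolativeinequalitybahourichemindanchin} between $s_1=-\frac32+\frac3p$ and $s_2=s_p$ with $\theta=6(\frac1\alpha-\frac13)\in(0,1)$ (this is where $2<\alpha<3$ enters), obtaining $u_0\in\dot{B}^{s_{p,\alpha}}_{p,1}(\mathbb{R}^3)\hookrightarrow\dot{B}^{s_{p,\alpha}}_{p,p}(\mathbb{R}^3)$: the $L_2$ norm handles $j\to-\infty$ and the $\dot{B}^{s_p}_{p,\infty}$ norm handles $j\to+\infty$. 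Your $VMO^{-1}$ argument is fine, and your choice $\alpha>3p/(1+p)$ is the same parameter selection the paper makes, but without bringing the $L_2$ information into the Besov step the inclusion you assert is false and the reduction does not close.
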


Our main tool to prove Theorem \ref{weakstronguniquenessBesov} is the new observation that weak Leray-Hopf solutions, with this class of initial data, have stronger continuity properties near $t=0$ than general members of the energy class $\mathcal{L}$. In \cite{cheminplanchon},  a similar property was obtained for  for the mild solution with initial data in $\dot{B}^{-\frac{1}{4}}_{4,4}(\mathbb{R}^3)$. Recently, in case of 'global weak $L_3$ solutions' with $L_3(\mathbb{R}^3)$ initial data, properties of this type were established in \cite{sersve2016}. See also \cite{barkerser16} for the case of $L^{3,\infty}$ initial data, in the context of 'global weak $L^{3,\infty}(\mathbb{R}^3)$ solutions'. Let us mention that  throughout this paper, $$S(t)u_0(x):=\Gamma(\cdot,t)\star u_0$$
where $\Gamma(x,t)$ is the kernel for the heat flow in $\mathbb{R}^3$.
Here is our main Lemma.
\begin{lemma}\label{estnearinitialforLeraywithbesov}

Take $\alpha$ and $p$ as in Theorem \ref{weakstronguniquenessBesov}. Assume that
$$
u_0 \in J(\mathbb{R}^3)\cap \dot{B}^{s_{p,\alpha}}_{p,p}(\mathbb{R}^3).
$$ 
Then for any weak Leray-Hopf $u$ solution on $Q_{T}:=\mathbb{R}^3 \times ]0,T[$, with initial data $u_0$, we infer the following.
There exists  $$\beta(p,\alpha)>0$$ and $$\gamma(\|u_{0}\|_{\dot{B}^{s_{p,\alpha}}_{p,p}(\mathbb{R}^3)}, p,\alpha)>0$$ such that  for $t\leq \min({1,\gamma},T):$
\begin{equation}\label{estimatenearinitialtime}
\|u(\cdot,t)-S(t)u_0\|_{L_{2}}^{2}\leq t^{\beta} c(p,\alpha, \|u_0\|_{L_{2}(\mathbb{R}^3)}, \|u_0\|_{\dot{B}^{s_{p,\alpha}}_{p,p}(\mathbb{R}^3)}).
\end{equation}
\end{lemma}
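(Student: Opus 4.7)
The plan is to derive an energy-type inequality for $w(t) := u(\cdot,t) - S(t) u_0$ and then to bound the resulting trilinear term using the Besov hypothesis on $u_0$. Set $h(\tau) := S(\tau) u_0$. I would use $h$ as a test function in the weak formulation (\ref{vsatisfiesNSE}) (after a standard spatial truncation plus mollification argument, justified since $h$ is smooth and divergence-free). Combining this with the Leray-Hopf inequality (\ref{venergyineq}) for $u$, the heat-energy identity $\|h(t)\|_{L_2}^2 + 2\int_0^t \|\nabla h\|_{L_2}^2\,d\tau = \|u_0\|_{L_2}^2$, and the expansion $\|w\|_{L_2}^2 = \|u\|_{L_2}^2 - 2(u,h)_{L_2} + \|h\|_{L_2}^2$, the $\|u_0\|_{L_2}^2$ terms cancel and produce
\begin{equation*}
\|w(t)\|_{L_2}^2 + 2\int_0^t \|\nabla w(\cdot,\tau)\|_{L_2}^2\,d\tau \leq 2\,\Bigl|\int_0^t \int_{\mathbb{R}^3} u \otimes u : \nabla h \,dx\,d\tau\Bigr|.
\end{equation*}

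Next I would decompose $u = w + h$ in the trilinear term. The pieces $\int h\otimes h : \nabla h$ and $\int w\otimes h : \nabla h = \int w_i \partial_i(|h|^2/2)$ vanish by $\mathrm{div}\,w = \mathrm{div}\,h = 0$, while $\int h\otimes w : \nabla h$ integrates by parts (using $\mathrm{div}\,w = 0$) to $-\int h\otimes h : \nabla w$. For the remaining pieces I would apply H\"older in space with exponents $(\tfrac{2p}{p-1},\tfrac{2p}{p-1},p)$ on $\int w\otimes w : \nabla h$, combined with Gagliardo--Nirenberg $\|w\|_{L_{2p/(p-1)}} \leq C\|w\|_{L_2}^{(2p-3)/(2p)}\|\nabla w\|_{L_2}^{3/(2p)}$, and $(L_4,L_4,L_2)$ on $\int h\otimes h : \nabla w$; Young's inequality then absorbs $\|\nabla w\|_{L_2}^2$ factors into the left-hand side, leaving
\begin{equation*}
\|w(t)\|_{L_2}^2 \leq C\!\int_0^t \|w(\tau)\|_{L_2}^2 \|\nabla h(\tau)\|_{L_p}^{\frac{2p}{2p-3}}\,d\tau + C\!\int_0^t \|h(\tau)\|_{L_4}^4\,d\tau.
\end{equation*}

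The nonhomogeneous term $\int_0^t \|h\|_{L_4}^4\,d\tau$ is bounded by $Ct^{\beta_1}$ for some explicit $\beta_1(p,\alpha)>0$ by combining $\|h\|_{L_2} \leq \|u_0\|_{L_2}$, the heat-to-Besov estimate $\|h(\tau)\|_{L_p} \leq C\tau^{s_{p,\alpha}/2}\|u_0\|_{\dot{B}^{s_{p,\alpha}}_{p,\infty}}$, and a plain $L_2$--$L_p$ interpolation to reach $L_4$; the range constraints on $\alpha$ and $p$ in the statement guarantee the resulting time exponent is strictly greater than $-1$. A Gronwall argument applied to $\tau \mapsto \|w(\tau)\|_{L_2}^2$ then delivers the claim provided $\int_0^\gamma \|\nabla h(\tau)\|_{L_p}^{2p/(2p-3)}\,d\tau < \infty$ for some $\gamma>0$.

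The principal obstacle, sharpest at the endpoint $\alpha=3$, is precisely this last integrability. The naive $L_\infty$-Besov bound gives only $\|\nabla h\|_{L_p}^{2p/(2p-3)} \lesssim \tau^{-1}$, which is logarithmically divergent. To circumvent this I plan to exploit the finer information $u_0 \in \dot{B}^{s_{p,\alpha}}_{p,p}$ (third index $p<\infty$, not $\infty$): the heat-semigroup characterization of the Besov norm gives $\tau^{(1-s_{p,\alpha})/2 - 1/p}\|\nabla h\|_{L_p} \in L_p(d\tau)$, and a weighted H\"older argument (possibly after splitting $u_0$ into a smooth part plus a remainder small in the coarser $\dot{B}^{s_{p,\alpha}}_{p,\infty}$) converts this global $L_p$-integrability into local integrability of $\|\nabla h\|_{L_p}^{2p/(2p-3)}$ with a positive power of $t$, which yields the exponent $\beta>0$.
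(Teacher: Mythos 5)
Your overall skeleton (energy inequality for the difference with a reference flow, trilinear estimate, Gronwall) is the right general shape, but the specific choice of the heat flow $S(t)u_0$ as the comparison function creates two gaps that your sketch does not close, and the second one is fatal on part of the admissible range. First, the Gronwall coefficient $\int_0^t\|\nabla S(\tau)u_0\|_{L_p}^{2p/(2p-3)}\,d\tau$ is not merely logarithmically divergent: since $s_{p,\alpha}=-3/\alpha+3/p$, the pointwise bound gives $\|\nabla S(\tau)u_0\|_{L_p}^{2p/(2p-3)}\lesssim \tau^{(3-p-3p/\alpha)/(2p-3)}$, and this exponent equals $-1$ only at $\alpha=3$; for every $\alpha<3$ it is strictly below $-1$, a genuine power divergence that no refinement via the third Besov index can repair (the heat characterization gives $\tau^{(1-s_{p,\alpha})/2-1/p}\|\nabla S(\tau)u_0\|_{L_p}\in L_p(d\tau)$, and a weighted H\"older computation shows this yields convergence of your integral only when $(1-s_{p,\alpha})\cdot\frac{2p}{2p-3}\le 2$, i.e.\ only at $\alpha=3$ — and even there it produces $o(1)$ as $t\to0$ rather than the quantitative rate $t^\beta$ the Lemma requires). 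Second, the nonhomogeneous term $\int_0^t\|S(\tau)u_0\|_{L_4}^4\,d\tau$ is not finite on the whole range: optimizing the interpolation between $\|S(\tau)u_0\|_{L_2}\le\|u_0\|_{L_2}$ and the Besov heat decay gives at best $\|S(\tau)u_0\|_{L_4}^4\lesssim\tau^{3(\alpha-4)/(2\alpha)}$, which is integrable only for $\alpha>12/5$; for $2<\alpha\le 12/5$ one can exhibit $u_0\in L_2\cap L_\alpha\subset L_2\cap\dot B^{s_{p,\alpha}}_{p,p}$ (e.g.\ a truncated power $|x|^{-3/\alpha}$) with $S(\cdot)u_0\notin L_4(Q_1)$, so your claim that the range constraints force the time exponent above $-1$ is false.

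The paper avoids both problems by not comparing with the heat flow at all. It performs a Calder\'on-type splitting of the initial data (Corollary \ref{Decomp}): $u_0=\bar u^{1,\epsilon}+\bar u^{2,\epsilon}$ with $\bar u^{1,\epsilon}$ in a \emph{subcritical} space $\dot B^{s_{p_0}+\delta}_{p_0,p_0}$, $\delta>0$, and $\bar u^{2,\epsilon}$ small in $L_2$ (norm $\lesssim\epsilon^{2-p}$). It then builds a genuine mild Navier--Stokes solution $w^{\epsilon}$ from $\bar u^{1,\epsilon}$ (Theorem \ref{regularity}); the subcritical shift $\delta$ is exactly what makes the Serrin norm $\int_0^s\|w^\epsilon\|_{L_{p_0}}^{r_0}d\tau\lesssim s^{\delta r_0/2}$ finite with a \emph{positive} power of $s$, so the Gronwall step for $v^\epsilon=u-w^\epsilon$ closes and yields $\|v^\epsilon(\cdot,s)\|_{L_2}^2\lesssim\|\bar u^{2,\epsilon}\|_{L_2}^2\exp(\cdots)$. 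The rate $t^\beta$ then comes from choosing $\epsilon=t^{-\gamma}$ and balancing exponents. Your parenthetical idea of "splitting $u_0$ into a smooth part plus a remainder small in $\dot B^{s_{p,\alpha}}_{p,\infty}$" gestures in this direction but is the wrong splitting: smallness of the remainder in a critical or supercritical norm cannot cure the non-integrability in time of the resulting trilinear terms, whereas smallness in $L_2$ paired with subcriticality of the complementary piece does.
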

This then allows us to apply a less restrictive version of approach 1. 
Namely, we show that for any initial data in this specific class, there exists a weak Leray-Hopf solution $V(u_0)$ on $Q_T$, which belongs to a path space $\mathcal{X}_{T}$ with the following property. Namely, $\mathcal{X}_{T}$ grants uniqueness for weak Leray-Hopf solutions with the same initial data in this specific class (rather than for arbitrary initial data in $J(\mathbb{R}^3)$, as required in approach 1).
A related strategy has been used in \cite{dongzhang}. However, in \cite{dongzhang} an additional restriction is imposed, requiring that the initial data has  positive Sobolev regularity.

\newpage
\textbf{Remarks}
\begin{enumerate}
\item Another notion of solution, to the Cauchy problem of the Navier Stokes system, was pioneered in \cite{Kato} and \cite{KatoFujita}. These solutions, called 'mild solutions' to the Navier-Stokes system, are constructed using a contraction principle and are unique in their class. Many authors have given classes of initial data for which mild solutions of the Navier-Stokes system exist. See, for example, \cite{cannone}, \cite{GigaMiy1989}, \cite{KozYam1994},  \cite{Plan1996} and \cite{Taylor1992}.

The optimal result in this direction was established in  \cite{kochtataru}. The authors there proved global in time existence of mild solutions for  solenoidal initial data with small $BMO^{-1}(\mathbb{R}^3)$ norm, as well as local in time existence for  solenoidal $u_0\in {VMO}^{-1}(\mathbb{R}^3)$. 
Subsequently, the results of the paper \cite{LRprioux} implied that if $u_0 \in J(\mathbb{R}^3)\cap {VMO}^{-1}(\mathbb{R}^3)$ then the mild solution is a weak Leray-Hopf solution. Consequently, we formulate the following plausible conjecture \textbf{(C)}.
\begin{itemize}
\item[]
\textbf{(C) Question (Q) is affirmative for $\mathcal{Z}={VMO}^{-1}(\mathbb{R}^3)$.}
\end{itemize}
\item 
 In \cite{GregoryNote1}, the following open question was discussed:

\begin{itemize}
\item[] \textbf{(Q.1)
{ Assume that $u_{0k}\in J(\mathbb{R}^3)$ are compactly supported in a fixed compact set and converge to $u_0\equiv 0$ weakly in $L_2(\mathbb{R}^3)$. Let $u^{(k)}$ be the  weak Leray-Hopf solution with the initial value $u_0^{(k)}$. Can we conclude that $v^{(k)}$ converge to $v\equiv 0$ in the sense of distributions? }}
\end{itemize}
In \cite{GregoryNote1} it was shown that (Q.1) holds true
under the following additional restrictions. Namely
\begin{equation}\label{initialdatanormbdd}
\sup_{k}\|u_{0}^{(k)}\|_{L_{s}(\mathbb{R}^3)}<\infty\,\,\,\,\,\,\,\rm{for\,\,some}\,\,3<s\leq\infty
\end{equation}
and that $u^{(k)}$ and it's associated pressure $p^{(k)}$ satisfy the local energy inequality:
\begin{equation}\label{localenergyinequality}
\int\limits_{\mathbb R^3}\varphi(x,t)|u^{(k)}(x,t)|^2dx+2\int\limits_{0}^t\int\limits_{\mathbb R^3}\varphi |\nabla u^{(k)}|^2 dxds\leq$$$$\leq
\int\limits_{0}^{t}\int\limits_{\mathbb R^3}|u^{(k)}|^2(\partial_{t}\varphi+\Delta\varphi)+u^{(k)}\cdot\nabla\varphi(|u^{(k)}|^2+2p^{(k)}) dxds
\end{equation}
for all non negative functions $\varphi\in C_{0}^{\infty}(Q_{\infty}).$ Subsequently,  in \cite{sersve2016} it was shown that the same conclusion holds with (\ref{initialdatanormbdd}) replaced by weaker assumption that
\begin{equation}\label{criticalL3seqbdd}
\sup_{k}\|u_{0}^{(k)}\|_{L_{3}(\mathbb{R}^3)}<\infty.
\end{equation} 
In \cite{barkerser16} , this was further weakened to boundedness of $u_{0}^{(k)}$ in $L^{3,\infty}(\mathbb{R}^3)$.

 Lemma \ref{estnearinitialforLeraywithbesov} has the consequence that  \textbf{(Q.1)} still holds true, if (\ref{initialdatanormbdd}) is replaced by the assumption that $u_{0}^{(k)}$ is bounded in the supercritical Besov spaces $\dot{B}^{s_{p,\alpha}}_{p,p}(\mathbb{R}^3)$\footnote{ with $p$ and $\alpha$ as in Theorem \ref{weakstronguniquenessBesov}}.
Consequently, as the following continuous embedding holds (recall $\alpha\leq p$ and $2\leq \alpha\leq 3$),
$$L_{\alpha}(\mathbb{R}^3)\hookrightarrow {\dot{B}^{s_{p,\alpha}}}_{p,p}(\mathbb{R}^3),$$
we see that this improves the previous assumptions under which \textbf{(Q.1)} holds true. 
\item 
In \cite{Serrin} and \cite{prodi}, it was shown that if $u$ is a weak Leray Hopf solution on $Q_{T}$ and satisfies
\begin{equation}\label{ladyzhenskayaserrinprodi}
u\in L_{q}(0,T; L_{p}(\mathbb{R}^3))\,\,\,\,\,\,\,\,\,\frac{3}{p}+\frac{2}{q}=1\,\, (3<p\leq \infty\,\,\rm{and}\,\, 2\leq q<\infty),
\end{equation}
then  $u$ coincides on $Q_{T}$ with other any weak Leray-Hopf solution with the same initial data. The same conclusion for the endpoint case $u\in L_{\infty}(0,T; L_{3}(\mathbb{R}^3))$ appeared to be much more challenging and was settled in \cite{ESS2003}.  As a consequence of Theorem \ref{weakstronguniquenessBesov}, we are able to extend the uniqueness criterion (\ref{ladyzhenskayaserrinprodi}) for weak Leray-Hopf solutions. Let us state this as a Proposition.
\begin{pro}\label{extendladyzhenskayaserrinprodi}
Suppose $u$ and $v$ are weak Leray-Hopf solutions on $Q_{\infty}$ with the same initial data $u_0 \in J(\mathbb{R}^3)$.
Then there exists a $\epsilon_{*}=\epsilon_{*}(p,q)>0$ such that if either 
\begin{itemize}
\item
\begin{equation}\label{extendladyzhenskayaserrinprodi1}
u \in L^{q,s}(0,T; L^{p,s}(\mathbb{R}^3))\,\,\,\,\,\,\,\,\,\frac{3}{p}+\frac{2}{q}=1
\end{equation}
\begin{equation}\label{integrabilitycondition1}
(3<p< \infty\,,\,\, 2< q<\infty\,\,\rm{and}\,\, 1\leq s<\infty)
\end{equation}
\item or
\begin{equation}\label{extendladyzhenskayaserrinprodi2}
u \in L^{q,\infty}(0,T; L^{p,\infty}(\mathbb{R}^3))\,\,\,\,\,\,\,\,\,\frac{3}{p}+\frac{2}{q}=1
\end{equation}
\begin{equation}\label{integrabilitycondition2}
(3< p< \infty\,,\,\, 2< q <\infty)
\end{equation}
with
\begin{equation}\label{smallness}
\|u\|_{L^{q,\infty}(0,T; L^{p,\infty}(\mathbb{R}^3))}\leq \epsilon_{*},
\end{equation}
then $u\equiv v$ on $Q_{T}:=\mathbb{R}^3 \times ]0,T[$.
\end{itemize}
\end{pro}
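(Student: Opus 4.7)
The plan is to reduce Proposition \ref{extendladyzhenskayaserrinprodi} to Theorem \ref{weakstronguniquenessBesov}: I would show that, under either Lorentz Prodi-Serrin hypothesis on $u$, the initial datum $u_0$ automatically lies in the admissible class $J(\mathbb{R}^3) \cap VMO^{-1}(\mathbb{R}^3) \cap \dot{B}^{s_{p,3}}_{p,p}(\mathbb{R}^3)$ for Theorem \ref{weakstronguniquenessBesov} (in the case $\alpha = 3$). One then bootstraps the resulting short-time uniqueness to all of $[0,T]$ by a continuity argument in time.

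For the first step, observe that for a.e.\ $t \in (0,T)$ one has $u(t) \in L^{p,s}(\mathbb{R}^3) \cap L^2(\mathbb{R}^3)$, so Lorentz interpolation gives $u(t) \in L^3(\mathbb{R}^3)$ with $\|u(t)\|_{L^3} \leq C \|u(t)\|_{L^{p,s}}^{1-\theta}\|u(t)\|_{L^2}^{\theta}$ for a suitable $\theta = \theta(p) \in (0,1)$. A Chebyshev estimate of the form $|\{t \in (0,\delta) : \|u(t)\|_{L^{p,s}} > M\}| \lesssim \|u\|_{L^{q,s}((0,\delta);L^{p,s})}^q M^{-q}$ --- finite in both cases of the hypothesis --- produces a sequence $t_n \downarrow 0$ along which $\|u(t_n)\|_{L^{p,s}}$, and thus $\|u(t_n)\|_{L^3}$, stays bounded. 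Strong $L^2$-convergence $u(t) \to u_0$ combined with weak $L^3$-compactness of $\{u(t_n)\}$ then yields $u_0 \in L^3(\mathbb{R}^3)$. The embedding $L^3 \hookrightarrow \dot{B}^{s_{p,3}}_{p,p}$ (the case $\alpha = 3$ of the embedding recorded immediately after Theorem \ref{weakstronguniquenessBesov}) together with $L^3 \hookrightarrow VMO^{-1}$ (which follows from $L^3 \hookrightarrow BMO^{-1}$ and the density of Schwartz functions in $L^3$) places $u_0$ in the desired class.

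Theorem \ref{weakstronguniquenessBesov} now supplies $T_0 = T(u_0) > 0$ with $u \equiv v$ on $Q_{T_0}$. To upgrade this to all of $Q_T$ I would use an open-closed argument: set $I := \{\tau \in [0,T] : u \equiv v \text{ on } Q_\tau\}$, which is closed by weak $L^2$-continuity of $u$ and $v$ and contains $[0,T_0]$. Suppose, for contradiction, $t^* := \sup I < T$. Closedness forces $u(t^*) = v(t^*)$, and the shifted solutions $u(t^* + \cdot), v(t^* + \cdot)$ on $(0, T-t^*)$ share the initial datum $u(t^*)$ and still satisfy the proposition's hypothesis (the Lorentz Prodi-Serrin norm and the smallness \eqref{smallness} are inherited by restriction). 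Re-applying the first step to these shifted solutions places $u(t^*) \in J \cap VMO^{-1} \cap \dot{B}^{s_{p,3}}_{p,p}$, and a second invocation of Theorem \ref{weakstronguniquenessBesov} delivers a positive additional time of uniqueness past $t^*$, contradicting the definition of $t^*$. Hence $I = [0,T]$ and $u \equiv v$ on $Q_T$.

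The main technical hurdle is the endpoint case $L^{q,\infty}$: while the Chebyshev extraction above requires only finiteness of the Lorentz norm, the quantitative size of $\|u(t_n)\|_{L^{p,\infty}}$ in the extracted subsequence, and hence the Besov norm of $u(t^*)$ at each restart, need not be small without the smallness threshold $\epsilon_* = \epsilon_*(p,q)$ in \eqref{smallness}. That threshold is chosen so that, through Lemma \ref{estnearinitialforLeraywithbesov} --- which underpins Theorem \ref{weakstronguniquenessBesov} and secures the requisite continuity of $u$ near the (possibly shifted) initial time --- the bootstrap in the third paragraph clears all of $[0,T]$ and does not stall. Verifying this quantitative uniformity in the restart step is the most delicate piece of the argument.
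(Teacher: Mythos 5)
Your reduction hinges on showing that $u_0\in L_{3}(\mathbb{R}^3)$, and this step fails. First, the extraction itself does not work: the Chebyshev bound gives $|\{t\in ]0,\delta[\,:\,\|u(t)\|_{L^{p,s}}>M\}|\leq \|u\|^{q}_{L^{q,s}(0,\delta;L^{p,s})}M^{-q}$, and for this to be smaller than $\delta$ you are forced to take $M=M(\delta)\gtrsim \|u\|_{L^{q,s}(0,\delta;L^{p,s})}\,\delta^{-1/q}$, which in general blows up as $\delta\to 0$ (already for the model behaviour $\|u(\cdot,t)\|_{L^{p,\infty}}\sim \epsilon_{*}t^{-1/q}$, which saturates the $L^{q,\infty}$ norm); so no sequence $t_n\downarrow 0$ with $\|u(\cdot,t_n)\|_{L^{p,s}}$ bounded can be produced. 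Second, and more fundamentally, the conclusion $u_0\in L_3$ is false in general: the hypotheses are scaling critical and are satisfied, for instance, by small mild solutions emanating from divergence-free $u_0\in L_{2}\cap L^{3,\infty}$ with a locally $(-1)$-homogeneous profile, or from $u_0\in L_{2}\cap \dot{B}^{s_p}_{p,q}$ with $2<q<\infty$; such data need not belong to $L_{3}(\mathbb{R}^3)$, nor to $VMO^{-1}(\mathbb{R}^3)$. Hence Theorem \ref{weakstronguniquenessBesov} is not applicable at $t=0$ and your argument has no starting point. (The continuation past a positive time $t^{*}$ is comparatively harmless, since $u(\cdot,t^{*})\in L_{2}\cap L_{\infty}\subset L_{3}$ by Sohr's regularity, though you would still need to verify that the restrictions of $u$ and $v$ to $]t^{*},T[$ are weak Leray--Hopf solutions, i.e.\ that the energy inequality and strong $L_2$ attainment hold from time $t^{*}$.)

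The paper's proof avoids $L_3$ altogether. From the integral equation $u=S(t)u_0+G(u\otimes u)$ and the bilinear estimate (\ref{bicontinuityLorentz}) it deduces $\|S(t)u_0\|_{L^{q,\infty}(0,T;L^{p,\infty})}\leq \epsilon_{*}+C\epsilon_{*}^{2}$; O'Neil's inequality and Lemma \ref{pointwiselorentzdecreasing} then yield the pointwise decay $\|S(t)u_0\|_{L^{p,\infty}}\lesssim \epsilon_{*}t^{-1/q}$ and hence smallness of the Kato norm $\sup_{0<t<T}t^{-s_{2p}/2}\|S(t)u_0\|_{L_{2p}}\lesssim \epsilon_{*}$, i.e.\ $u_0\in J(\mathbb{R}^3)\cap \dot{B}^{s_{2p}}_{2p,\infty}(\mathbb{R}^3)$ with small caloric norm. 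Uniqueness then follows from Remark \ref{remarkcannoneweakstrong}, the variant of Theorem \ref{weakstronguniquenessBesov} in which smallness of the Kato norm replaces membership of $VMO^{-1}$; the case $s<\infty$ is reduced to this one because its norm over $]0,S[$ shrinks as $S\to 0$, and uniqueness is propagated to all of $]0,T[$ using Sohr's regularity. If you wish to salvage your scheme, you must replace the claim ``$u_0\in L_3$'' by this weaker conclusion, at which point the threshold $\epsilon_{*}$ becomes essential to the argument rather than a technical convenience.
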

 Let us mention that for sufficently small $\epsilon_{*}$, it was shown in \cite{Sohr} that if $u$ is a weak Leray-Hopf solution on $Q_{\infty}$ satisfying either (\ref{extendladyzhenskayaserrinprodi1})-(\ref{integrabilitycondition1}) or (\ref{extendladyzhenskayaserrinprodi2})-(\ref{smallness}), then $u$ is regular\footnote{By regular on $Q_{T}$, we mean $C^{\infty}(\mathbb{R}^3 \times ]0,T[)$ with every space-time derivative in $L_{\infty}(\epsilon,T; L_{\infty}(\mathbb{R}^3))$ for any $0<\epsilon<T$.} on $Q_{T}$. 
 To the best of our knowledge, it was not previously known whether these conditions on $u$ were sufficient to grant uniqueness on $Q_{T}$, amongst all weak Leray Hopf solutions with the same initial value.\par Uniqueness for the endpoint case $(p,q)=(3,\infty)$ of (\ref{extendladyzhenskayaserrinprodi2})-(\ref{smallness}) is simpler and already known. A proof can be found in \cite{LR1}, for example. Hence, we omit this case. 

\end{enumerate}

\setcounter{equation}{0}
\section{Preliminaries}
\subsection{Notation}
In this subsection, we will introduce  notation that will be repeatedly used throughout the rest of the paper. We adopt the usual summation convention throughout the paper .

 
 For arbitrary vectors $a=(a_{i}),\,b=(b_{i})$ in $\mathbb{R}^{n}$ and for arbitrary matrices $F=(F_{ij}),\,G=(G_{ij})$ in $\mathbb{M}^{n}$ we put
 $$a\cdot b=a_{i}b_{i},\,|a|=\sqrt{a\cdot a},$$
 $$a\otimes b=(a_{i}b_{j})\in \mathbb{M}^{n},$$
 $$FG=(F_{ik}G_{kj})\in \mathbb{M}^{n}\!,\,\,F^{T}=(F_{ji})\in \mathbb{M}^{n}\!,$$
 $$F:G=
 F_{ij}G_{ij},\,|F|=\sqrt{F:F},$$
For  spatial domains and space time domains, we will make use of the following notation:
$$B(x_0,R)=\{x\in\mathbb{R}^3: |x-x_0|<R\},$$
$$B(\theta)=B(0,\theta),\,\,\,B=B(1),$$
$$ Q(z_0,R)=B(x_0,R)\times ]t_0-R^2,t_0[,\,\,\, z_{0}=(x_0,t_0),$$
$$Q(\theta)=Q(0,\theta),\,\,Q = Q(1),\,\, Q_{a,b}:=\mathbb{R}^3\times ]a,b[.$$
Here $-\infty\leq a<b\leq \infty.$
In the special cases where $a=0$ we write $Q_{b}:= Q_{0,b}.$

For $\Omega\subseteq\mathbb{R}^3$,  mean values of integrable functions are denoted as follows
$$[p]_{\Omega}=\frac{1}{|\Omega|}\int\limits_{\Omega}p(x)dx.$$
 For, $\Omega\subseteq\mathbb{R}^3$, the space $[C^{\infty}_{0,0}(\Omega)]^{L_{s}(\Omega)}$ is defined to be the closure of $$C^{\infty}_{0,0}(\Omega):=\{u\in C_{0}^{\infty}(\Omega): \rm{div}\,\,u=0\}$$
 with respect to the $L_{s}(\Omega)$ norm.
 For $s=2$, we define
 $$J(\Omega):= [C^{\infty}_{0,0}(\Omega)]^{L_{2}(\Omega)}.$$
 We define $\stackrel{\circ}J{^1_2}(\Omega)$ as the completion of 
$C^\infty_{0,0}(\Omega)$
with respect to $L_2$-norm and the Dirichlet integral
$$\Big(\int\limits_{\Omega} |\nabla v|^2dx\Big)^\frac 12 .$$
If $X$ is a Banach space with norm $\|\cdot\|_{X}$, then $L_{s}(a,b;X)$,  with $a<b$ and $s\in [1,\infty[$,  will denote the usual Banach space of strongly measurable $X$-valued functions $f(t)$ on $]a,b[$ such that 
$$\|f\|_{L_{s}(a,b;X)}:=\left(\int\limits_{a}^{b}\|f(t)\|_{X}^{s}dt\right)^{\frac{1}{s}}<+\infty.$$ 
The usual modification is made if $s=\infty$.
With this notation, we will define 
$$L_{s,l}(Q_{a,b}):= L_{l}(a,b; L_{s}(\mathbb{R}^3)).$$
Let $C([a,b]; X)$ denote the space of continuous $X$ valued functions on $[a,b]$ with usual norm.
In addition, let $C_{w}([a,b]; X)$ denote the space of $X$ valued functions, which are continuous from $[a,b]$ to the weak topology of $X$.\\
We  define the following Sobolev spaces with mixed norms:
$$ W^{1,0}_{m,n}(Q_{a,b})=\{ v\in L_{m,n}(Q_{a,b}): \|v\|_{L_{m,n}(Q_{a,b})}+$$$$+\|\nabla v\|_{L_{m,n}(Q_{a,b})}<\infty\},$$
$$ W^{2,1}_{m,n}(Q_{a,b})=\{ v\in L_{m,n}(Q_{a,b}): \|v\|_{L_{m,n}(Q_{a,b})}+$$$$+\|\nabla^2 v\|_{L_{m,n}(Q_{a,b})}+\|\partial_{t}v\|_{L_{m,n}(Q_{a,b})}<\infty\}.$$

\subsection{Relevant Function Spaces}

\subsubsection{Homogeneous Besov Spaces and $BMO^{-1}$}
 We first introduce the frequency cut off operators of the Littlewood-Paley theory. The definitions we use are contained in \cite{bahourichemindanchin}. For a tempered distribution $f$, let  $\mathcal{F}(f)$ denote its Fourier transform.  Let $C$ be the annulus $$\{\xi\in\mathbb{R}^3: 3/4\leq|\xi|\leq 8/3\}.$$
Let $\chi\in C_{0}^{\infty}(B(4/3))$ and $\varphi\in C_{0}^{\infty}(C)$  be such that
\begin{equation}\label{valuesbetween0and1}
\forall \xi\in\mathbb{R}^3,\,\,0\leq \chi(\xi), \varphi(\xi)\leq 1 ,
\end{equation}
\begin{equation}\label{dyadicpartition}
\forall \xi\in\mathbb{R}^3,\,\,\chi(\xi)+\sum_{j\geq 0} \varphi(2^{-j}\xi)=1
\end{equation}
and
\begin{equation}\label{dyadicpartition.1}
\forall \xi\in\mathbb{R}^3\setminus\{0\},\,\,\sum_{j\in\mathbb{Z}} \varphi(2^{-j}\xi)=1.
\end{equation}
For $a$ being a tempered distribution, let us define for $j\in\mathbb{Z}$:
\begin{equation}\label{dyadicblocks}
\dot{\Delta}_j a:=\mathcal{F}^{-1}(\varphi(2^{-j}\xi)\mathcal{F}(a))\,\,and\,\, \dot{S}_j a:=\mathcal{F}^{-1}(\chi(2^{-j}\xi)\mathcal{F}(a)).
\end{equation}
Now we are in a position to define the homogeneous Besov spaces on $\mathbb{R}^3$. Let $s\in\mathbb{R}$ and $(p,q)\in [1,\infty]\times [1,\infty]$. Then $\dot{B}^{s}_{p,q}(\mathbb{R}^3)$ is the subspace of tempered distributions such that
\begin{equation}\label{Besovdef1}
\lim_{j\rightarrow-\infty} \|\dot{S}_{j} u\|_{L_{\infty}(\mathbb{R}^3)}=0,
\end{equation}
\begin{equation}\label{Besovdef2}
\|u\|_{\dot{B}^{s}_{p,q}(\mathbb{R}^3)}:= \Big(\sum_{j\in\mathbb{Z}}2^{jsq}\|\dot{\Delta}_{j} u\|_{L_p(\mathbb{R}^3)}^{q}\Big)^{\frac{1}{q}}.
\end{equation}
\begin{remark}\label{besovremark1}
 This definition provides a Banach space if $s<\frac{3}{p}$, see \cite{bahourichemindanchin}.
\end{remark}
\begin{remark}\label{besovremark2}
It is known that if $1\leq q_{1}\leq q_{2}\leq\infty$, $1\leq p_1\leq p_2\leq\infty$ and $s\in\mathbb{R}$, then
$$\dot{B}^{s}_{p_1,q_{1}}(\mathbb{R}^3)\hookrightarrow\dot{B}^{s-3(\frac{1}{p_1}-\frac{1}{p_2})}_{p_2,q_{2}}(\mathbb{R}^3).$$
\end{remark}
\begin{remark}\label{besovremark3}
It is known that for $s=-2s_{1}<0$ and $p,q\in [1,\infty]$, the norm can be characterised by the heat flow. Namely there exists a $C>1$ such that for all $u\in\dot{B}^{-2s_{1}}_{p,q}(\mathbb{R}^3)$:
$$C^{-1}\|u\|_{\dot{B}^{-2s_{1}}_{p,q}(\mathbb{R}^3)}\leq \|\|t^{s_1} S(t)u\|_{L_{p}(\mathbb{R}^3)}\|_{L_{q}(\frac{dt}{t})}\leq C\|u\|_{\dot{B}^{-2s_{1}}_{p,q}(\mathbb{R}^3)}.$$  Here, $$S(t)u(x):=\Gamma(\cdot,t)\star u$$
where $\Gamma(x,t)$ is the kernel for the heat flow in $\mathbb{R}^3$.
\end{remark}
We will also need the following Proposition, whose statement and proof can be found in \cite{bahourichemindanchin} (Proposition 2.22 there) for example. In the Proposition below we use the notation
\begin{equation}\label{Sh}
\mathcal{S}_{h}^{'}:=\{ \rm{ tempered\,\,distributions}\,\, u\rm{\,\,\,such\,\,that\,\,} \lim_{j\rightarrow -\infty}\|S_{j}u\|_{L_{\infty}(\mathbb{R}^3)}=0\}.
\end{equation}
\begin{pro}\label{interpolativeinequalitybahourichemindanchin}
A constant $C$ exists with the following properties. If $s_{1}$ and $s_{2}$ are real numbers such that $s_{1}<s_{2}$ and $\theta\in ]0,1[$, then we have, for any $p\in [1,\infty]$ and any $u\in \mathcal{S}_{h}^{'}$,
\begin{equation}\label{interpolationactual}
\|u\|_{\dot{B}_{p,1}^{\theta s_{1}+(1-\theta)s_{2}}(\mathbb{R}^3)}\leq \frac{C}{s_2-s_1}\Big(\frac{1}{\theta}+\frac{1}{1-\theta}\Big)\|u\|_{\dot{B}_{p,\infty}^{s_1}(\mathbb{R}^3)}^{\theta}\|u\|_{\dot{B}_{p,\infty}^{s_2}(\mathbb{R}^3)}^{1-\theta}.
\end{equation}
\end{pro}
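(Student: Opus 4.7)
The plan is to use the dyadic definition of the Besov norm and to interpolate at each frequency scale between the two $\dot B^{s_i}_{p,\infty}$ estimates, splitting the resulting sum at a carefully chosen dyadic index.

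Writing $s := \theta s_1 + (1-\theta) s_2$, I start from
\begin{equation*}
\|u\|_{\dot B^s_{p,1}(\mathbb{R}^3)} = \sum_{j \in \mathbb{Z}} 2^{js} \|\dot\Delta_j u\|_{L^p(\mathbb{R}^3)},
\end{equation*}
together with the elementary block estimates $\|\dot\Delta_j u\|_{L^p} \leq 2^{-j s_i} \|u\|_{\dot B^{s_i}_{p,\infty}}$ for $i = 1, 2$, which are immediate from the definition of the $\ell^\infty$-norm in $j$. For any integer $N$, I then split the sum at $j = N$, using the $s_1$-bound for $j \leq N$ (this is the "low" frequency range, for which $s>s_1$ makes the series convergent at $-\infty$) and the $s_2$-bound for $j > N$ (the "high" range, where $s<s_2$). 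The two tails are geometric series with common ratios $2^{(1-\theta)(s_2-s_1)}>1$ and $2^{-\theta(s_2-s_1)}<1$ respectively, and applying the elementary identity $\sum_{j\leq N} r^{j-N}=(1-r^{-1})^{-1}$ gives
\begin{equation*}
\|u\|_{\dot B^s_{p,1}} \leq \frac{C\,2^{N(1-\theta)(s_2-s_1)}}{(1-\theta)(s_2-s_1)} \|u\|_{\dot B^{s_1}_{p,\infty}} + \frac{C\,2^{-N\theta(s_2-s_1)}}{\theta(s_2-s_1)}\|u\|_{\dot B^{s_2}_{p,\infty}},
\end{equation*}
where I have used $1/(1-2^{-x}) \lesssim 1/x$ in the relevant range of $x$.

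The second step is to balance the two terms by choosing $N$ to be the nearest integer to $(s_2-s_1)^{-1}\log_2\bigl(\|u\|_{\dot B^{s_2}_{p,\infty}}/\|u\|_{\dot B^{s_1}_{p,\infty}}\bigr)$. With this choice, both $2^{N(1-\theta)(s_2-s_1)}\|u\|_{\dot B^{s_1}_{p,\infty}}$ and $2^{-N\theta(s_2-s_1)}\|u\|_{\dot B^{s_2}_{p,\infty}}$ are simultaneously comparable to $\|u\|_{\dot B^{s_1}_{p,\infty}}^{\theta}\|u\|_{\dot B^{s_2}_{p,\infty}}^{1-\theta}$, up to a harmless universal factor absorbing the integer rounding. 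Substituting back yields exactly the claimed estimate with constant of the asserted form $\frac{C}{s_2-s_1}\bigl(\frac{1}{\theta}+\frac{1}{1-\theta}\bigr)$.

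The main obstacle is purely bookkeeping: tracking the geometric-series prefactors carefully enough to pin down the precise dependence $(1-\theta)^{-1}(s_2-s_1)^{-1}$ and $\theta^{-1}(s_2-s_1)^{-1}$, each of which becomes singular in the expected endpoint regimes. The hypothesis $u \in \mathcal{S}_h'$ plays a small but essential role: it guarantees that the low-frequency portion $\sum_{j\leq N}\dot\Delta_j u$ converges in $\mathcal{S}'(\mathbb{R}^3)$, so that no issue arises in the tail $j \to -\infty$. Once the optimization of $N$ is carried out and the rounding constant is absorbed, no further ingredient is required and the statement follows.
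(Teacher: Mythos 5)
The paper offers no proof of this proposition; it simply cites Proposition 2.22 of \cite{bahourichemindanchin}, and your argument is precisely the standard dyadic proof given there (block-wise $\ell^\infty$ bounds, splitting the $\ell^1$ sum at an index $N$, summing two geometric series, and choosing $N$ to balance the terms), so the approach matches and the argument is essentially correct. One caveat worth recording: the step $1/(1-2^{-x})\lesssim 1/x$ holds only for $x$ bounded above (the left side tends to $1$ as $x\to\infty$ while the right side tends to $0$), and here $x$ equals $\theta(s_2-s_1)$ or $(1-\theta)(s_2-s_1)$, which are unrestricted in the statement; likewise the integer-rounding of $N$ costs a factor $2^{O(s_2-s_1)}$ that is universal only for bounded $s_2-s_1$. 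In fact the stated prefactor $\frac{C}{s_2-s_1}\bigl(\frac{1}{\theta}+\frac{1}{1-\theta}\bigr)$ cannot hold uniformly in $s_2-s_1$ (test on a $u$ with a single nonzero dyadic block, for which all three norms coincide), so this is a defect of the statement as quoted from \cite{bahourichemindanchin} rather than of your proof; it is harmless for the paper, whose only application takes $s_2-s_1=\frac12$.
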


Finally, $BMO^{-1}(\mathbb{R}^3)$ is the space of all tempered distributions such that the following norm is finite:
\begin{equation}\label{bmo-1norm}
\|u\|_{BMO^{-1}(\mathbb{R}^3)}:=\sup_{x\in\mathbb{R}^3,R>0}\frac{1}{|B(0,R)|}\int\limits_0^{R^2}\int\limits_{B(x,R)} |S(t)u|^2 dydt.
\end{equation}
Note that $VMO^{-1}(\mathbb{R}^3)$ is the subspace that coincides with the closure of test functions $C_{0}^{\infty}(\mathbb{R}^3)$, with respect to the norm (\ref{bmo-1norm}).

\subsubsection{Lorentz spaces}
Given a measurable subset $\Omega\subset\mathbb{R}^{n}$, let us define the Lorentz spaces. 
For a measurable function $f:\Omega\rightarrow\mathbb{R}$ define:
\begin{equation}\label{defdist}
d_{f,\Omega}(\alpha):=\mu(\{x\in \Omega : |f(x)|>\alpha\}),
\end{equation}
where $\mu$ denotes Lebesgue measure.
 The Lorentz space $L^{p,q}(\Omega)$, with $p\in [1,\infty[$, $q\in [1,\infty]$, is the set of all measurable functions $g$ on $\Omega$ such that the quasinorm $\|g\|_{L^{p,q}(\Omega)}$ is finite. Here:

\begin{equation}\label{Lorentznorm}
\|g\|_{L^{p,q}(\Omega)}:= \Big(p\int\limits_{0}^{\infty}\alpha^{q}d_{g,\Omega}(\alpha)^{\frac{q}{p}}\frac{d\alpha}{\alpha}\Big)^{\frac{1}{q}},
\end{equation}
\begin{equation}\label{Lorentznorminfty}
\|g\|_{L^{p,\infty}(\Omega)}:= \sup_{\alpha>0}\alpha d_{g,\Omega}(\alpha)^{\frac{1}{p}}.
\end{equation}\\
It is known there exists a norm, which is equivalent to the quasinorms defined above, for which $L^{p,q}(\Omega)$ is a Banach space. 
For $p\in [1,\infty[$ and $1\leq q_{1}< q_{2}\leq \infty$, we have the following continuous embeddings 
\begin{equation}\label{Lorentzcontinuousembedding}
L^{p,q_1}(\Omega) \hookrightarrow  L^{p,q_2}(\Omega)
\end{equation}
and the inclusion is known to be strict.

Let $X$ be a Banach space with norm $\|\cdot\|_{X}$, $ a<b$, $p\in [1,\infty[$ and  $q\in [1,\infty]$. 
 Then $L^{p,q}(a,b;X)$   will denote the space of strongly measurable $X$-valued functions $f(t)$ on $]a,b[$ such that 

\begin{equation}\label{Lorentznormbochner}
\|f\|_{L^{p,q}(a,b; X)}:= \|\|f(t)\|_{X}\|_{L^{p,q}(a,b)}<\infty.
\end{equation}
In particular, if $1\leq q_{1}< q_{2}\leq \infty$, we have the following continuous embeddings 
\begin{equation}\label{Bochnerlorentzcontinuousembedding}
L^{p,q_1}(a,b; X) \hookrightarrow  L^{p,q_2}(a,b; X)
\end{equation}
and the inclusion is known to be strict.

Let us recall a known Proposition known as 'O'Neil's convolution inequality' (Theorem 2.6 of \cite{O'Neil}), which will be used in proving Proposition \ref{extendladyzhenskayaserrinprodi}.
\begin{pro}\label{O'Neil}
Suppose $1\leq p_{1}, p_{2}, q_{1}, q_{2}, r\leq\infty$ are such that
\begin{equation}\label{O'Neilindices1}
\frac{1}{r}+1=\frac{1}{p_1}+\frac{1}{p_{2}}
\end{equation}
and
\begin{equation}\label{O'Neilindices2}
\frac{1}{q_1}+\frac{1}{q_{2}}\geq \frac{1}{s}.
\end{equation}
Suppose that
\begin{equation}\label{fghypothesis}
f\in L^{p_1,q_1}(\mathbb{R}^{n})\,\,\rm{and}\,\,g\in  L^{p_2,q_2}(\mathbb{R}^{n}).
\end{equation}
Then it holds that
\begin{equation}\label{fstargconclusion1}
f\star g \in L^{r,s}(\mathbb{R}^n)\,\,\rm{with} 
\end{equation}
\begin{equation}\label{fstargconclusion2}
\|f\star g \|_{L^{r,s}(\mathbb{R}^n)}\leq 3r \|f\|_{L^{p_1,q_1}(\mathbb{R}^n)} \|g\|_{L^{p_2,q_2}(\mathbb{R}^n)}. 
\end{equation}
\end{pro}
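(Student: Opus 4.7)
The plan is to prove O'Neil's inequality via decreasing rearrangements and a pointwise estimate on $(f\star g)^{**}$. For a measurable function $h$ on $\mathbb{R}^n$, write $h^*(t) = \inf\{\alpha > 0 : d_{h,\mathbb{R}^n}(\alpha) \leq t\}$ for its decreasing rearrangement and $h^{**}(t) = \frac{1}{t}\int_0^t h^*(u)\,du$ for its maximal average. The first step is to recall (or re-derive) the identity
\begin{equation*}
\|h\|_{L^{p,q}(\mathbb{R}^n)} = \bigl\|t^{1/p} h^*(t)\bigr\|_{L^q(dt/t)} \asymp \bigl\|t^{1/p} h^{**}(t)\bigr\|_{L^q(dt/t)},
\end{equation*}
valid for $p\in]1,\infty[$, with the equivalence constant controlled by Hardy's inequality (it is here that the factor $r$ in $3r$ ultimately appears). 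This reduces the whole problem to controlling $(f\star g)^{**}$.

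The central technical step, due to O'Neil, is the pointwise inequality
\begin{equation*}
(f\star g)^{**}(t) \leq t\, f^{**}(t)\, g^{**}(t) + \int_t^\infty f^*(u)\, g^*(u)\, du.
\end{equation*}
I would establish this by splitting $f = f_1 + f_2$ and $g = g_1 + g_2$, where $f_1, g_1$ are the truncations at height $f^*(t)$ and $g^*(t)$, estimating the four cross-terms $f_i \star g_j$: the $f_1\star g_1$ term is bounded pointwise by $\|f_1\|_1 \|g_1\|_\infty$ and symmetrically, yielding the product $t f^{**}(t) g^{**}(t)$, while the $f_2\star g_2$ term is controlled via the Hardy--Littlewood rearrangement inequality $\int fg \leq \int f^* g^*$, yielding the tail integral. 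This is the step where one must be most careful with constants, since the statement asks for the explicit constant $3r$.

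Next, insert the pointwise bound into the $L^{r,s}$ norm. Writing
\begin{equation*}
\bigl\|t^{1/r}(f\star g)^{**}(t)\bigr\|_{L^s(dt/t)} \leq \bigl\|t^{1+1/r} f^{**}(t) g^{**}(t)\bigr\|_{L^s(dt/t)} + \Bigl\|t^{1/r}\!\int_t^\infty f^*(u) g^*(u)\,du\Bigr\|_{L^s(dt/t)},
\end{equation*}
one handles the first summand by the pointwise bound $f^{**}(t) \leq t^{-1/p_1}\|f\|_{L^{p_1,\infty}}$ combined with a second copy for $g$ (or by H\"older in the Lorentz scale), using the index relation $1 + 1/r = 1/p_1 + 1/p_2$. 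The second summand is exactly the form handled by Hardy's inequality in the variable $t$, which produces the $L^s(dt/t)$ norm of $t^{1+1/r} f^*(t) g^*(t)$; applying O'Neil/H\"older in Lorentz spaces with $1/s \leq 1/q_1 + 1/q_2$ then closes the estimate in terms of $\|f\|_{L^{p_1,q_1}}\|g\|_{L^{p_2,q_2}}$. The case $s=\infty$ and the endpoint cases where some index equals $1$ or $\infty$ are handled separately by direct arguments with the sup-norm form of the Lorentz quasinorm.

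The main obstacle is the bookkeeping of constants to obtain the explicit factor $3r$: the three pieces arise from the two terms in the pointwise inequality plus the passage between $h^*$ and $h^{**}$, each contributing a bounded constant, while the factor $r$ is the Hardy-inequality constant on $L^s(dt/t)$. A secondary but routine difficulty is verifying the pointwise inequality rigorously at the points where $f^*$ or $g^*$ has jumps, which is handled by a standard approximation of $f$ and $g$ by simple functions and invoking lower semicontinuity.
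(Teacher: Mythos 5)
The paper does not prove this proposition at all: it is quoted verbatim as Theorem 2.6 of the cited reference \cite{O'Neil}, so there is no internal argument to compare against. Your outline is precisely the classical proof from that reference --- the pointwise bound $(f\star g)^{**}(t)\leq t\,f^{**}(t)g^{**}(t)+\int_t^\infty f^*(u)g^*(u)\,du$ followed by Hardy's inequality on $L^s(dt/t)$ --- and it is correct in structure, with the only real labour being the constant bookkeeping for the factor $3r$ and the degenerate endpoint cases, both of which you correctly identify.
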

Let us finally state and proof a simple Lemma, which we will make use of in proving Proposition \ref{ladyzhenskayaserrinprodi}.
\begin{lemma}\label{pointwiselorentzdecreasing}
Let  $f:]0,T[\rightarrow ]0,\infty[$ be a function satisfying the following property. In particular, suppose that there exists a $C\geq 1$ such that for any $0<t_{0}\leq t_{1}<T$ we have 
\begin{equation}\label{almostdecreasing}
f(t_{1})\leq C f(t_{0}).
\end{equation}
In addition, assume that for some $1\leq r<\infty$:
\begin{equation}\label{florentz}
f \in L^{r,\infty}(0,T).
\end{equation}
Then one can conclude that for all $t\in ]0,T[$: 
\begin{equation}\label{fpointwise}
f(t)\leq \frac{2C\|f\|_{L^{r,\infty}(0,T)}}{t^{\frac{1}{r}}}.
\end{equation}

\end{lemma}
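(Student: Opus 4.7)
The plan is to exploit the almost-decreasing property to obtain a lower bound for the distribution function of $f$ at a carefully chosen threshold, and then compare this with the definition of the $L^{r,\infty}$ quasinorm.

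First I would fix an arbitrary $t \in (0,T)$ and apply the hypothesis (\ref{almostdecreasing}) with $t_1 = t$ and $t_0 = s$ ranging over $(0, t]$. This gives $f(s) \geq f(t)/C$ for every $s \in (0, t]$. Next I would choose the threshold $\alpha := f(t)/(2C)$, which is strictly positive since $f$ takes values in $(0,\infty)$. Then $f(s) \geq f(t)/C = 2\alpha > \alpha$ for every $s \in (0, t]$, so
\begin{equation*}
\{s \in (0,T) : f(s) > \alpha\} \supseteq (0, t],
\end{equation*}
and consequently $d_{f,(0,T)}(\alpha) \geq t$.

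Plugging this threshold into the defining formula (\ref{Lorentznorminfty}) for the $L^{r,\infty}$ quasinorm yields
\begin{equation*}
\|f\|_{L^{r,\infty}(0,T)} \geq \alpha \, d_{f,(0,T)}(\alpha)^{1/r} \geq \frac{f(t)}{2C}\, t^{1/r},
\end{equation*}
and rearranging produces the claimed bound (\ref{fpointwise}). Since $t \in (0,T)$ was arbitrary, the proof is complete.

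There is no real obstacle here; the only subtlety is to ensure strict inequality $f(s) > \alpha$ (rather than $\geq$) at the chosen threshold, which is why I would take the factor $1/(2C)$ rather than $1/C$ — this also produces precisely the constant $2C$ appearing in the statement. The argument uses only the quasinorm definition, so no equivalent Banach norm on the Lorentz space is needed.
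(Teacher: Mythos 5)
Your proof is correct and rests on the same mechanism as the paper's: combining the almost-monotonicity $f(s)\geq f(t)/C$ on $(0,t]$ with the weak-$L^r$ control of the distribution function. The paper organizes this slightly differently (normalizing $\|f\|_{L^{r,\infty}}=1/2$ and arguing that the superlevel set at height $t^{-1/r}$ cannot cover $(0,t)$, so some $t_0\leq t$ has $f(t_0)\leq t^{-1/r}$), but your direct lower bound $d_{f,(0,T)}(f(t)/(2C))\geq t$ is the same idea and is, if anything, cleaner.
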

 \begin{proof}
 It suffices to proof that if $f$ satisfies the hypothesis of Lemma \ref{pointwiselorentzdecreasing},  along with the additional constraint
 \begin{equation}\label{lorentznormhalf}
 \|f\|_{L^{r,\infty}(0,T)}=\frac{1}{2},
 \end{equation}
 then we must necessarily have that for any $0<t<T$
 \begin{equation}\label{fpointwisereduction}
 f(t)\leq \frac{C}{t^{\frac{1}{r}}}.
 \end{equation}
  The assumption (\ref{lorentznormhalf}) implies that
 \begin{equation}\label{weakcharacterisationdistribution}
 \sup_{\alpha>0} \alpha^r \mu(\{s\in]0,T[\,\,\rm{such\,\,that}\,\,f(s)>\alpha\})<1.
 \end{equation}
 Fixing  $t\in ]0,T[$ and setting $\alpha=\frac{1}{t^{\frac{1}{r}}}$, we see that
 \begin{equation}\label{weakcharacterisationdistributiont}
   \mu(\{s\in]0,T[\,\,\rm{such\,\,that}\,\,f(s)>1/{t^{\frac{1}{r}}}\})<t.
 \end{equation}
 For $0<t_{0}\leq t$ we have
 \begin{equation}\label{almostdecreasingrecall}
 f(t)\leq Cf(t_0).
 \end{equation}
 This, together with (\ref{weakcharacterisationdistributiont}), implies (\ref{fpointwisereduction}).
 \end{proof}
\subsection{Decomposition of Homogeneous Besov Spaces}
Next state and prove certain decompositions  for homogeneous Besov spaces. This will play a crucial role in the proof of Theorem \ref{weakstronguniquenessBesov}.  In the context of Lebesgue spaces,  an analogous statement  is Lemma II.I proven by Calderon in \cite{Calderon90}. 
Before stating and proving this, we take note of a useful Lemma presented in \cite{bahourichemindanchin} (specifically, Lemma 2.23 and Remark 2.24 in \cite{bahourichemindanchin}).
\begin{lemma}\label{bahouricheminbook}
Let $C^{'}$ be an annulus and let $(u^{(j)})_{j\in\mathbb{Z}}$ be a sequence of functions such that
\begin{equation}\label{conditionsupport}
\rm{Supp}\, \mathcal{F}(u^{(j)})\subset 2^{j}C^{'} 
\end{equation}
and
\begin{equation}\label{conditionseriesbound}
\Big(\sum_{j\in\mathbb{Z}}2^{jsr}\|u^{(j)}\|_{L_p}^{r}\Big)^{\frac{1}{r}}<\infty.
\end{equation}
Moreover, assume in addition that 
\begin{equation}\label{indicescondition}
s<\frac{3}{p}.
\end{equation}
Then the following holds true.
The series $$\sum_{j\in\mathbb{Z}} u^{(j)}$$ converges (in the sense of tempered distributions) to some $u\in \dot{B}^{s}_{p,r}(\mathbb{R}^3)$, which satisfies the following estimate:
\begin{equation}\label{besovboundlimitfunction}
\|u\|_{\dot{B}^{s}_{p,r}}\leq C_s \Big(\sum_{j\in\mathbb{Z}}2^{jsr}\|u^{(j)}\|_{L_p}^{r}\Big)^{\frac{1}{r}}.
\end{equation}

\end{lemma}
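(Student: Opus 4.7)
My plan is to verify that the series $\sum_{j\in\mathbb{Z}} u^{(j)}$ defines an element of $\mathcal{S}'(\mathbb{R}^3)$, identify its Littlewood-Paley blocks via the spectral localisation hypothesis, and then recover the Besov norm estimate by a discrete convolution argument.

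First I would split the sum at $j=0$. For $j\geq 0$ the condition $(2^{js}\|u^{(j)}\|_{L_p})_j\in\ell^r$ forces at least polynomial decay of $\|u^{(j)}\|_{L_p}$ in $2^j$, and together with $\mathrm{Supp}\,\mathcal{F}(u^{(j)})\subset 2^jC'$ this gives convergence of $\sum_{j\geq 0}\langle u^{(j)},\phi\rangle$ for any $\phi\in\mathcal{S}(\mathbb{R}^3)$: writing $u^{(j)}=\psi_j\star u^{(j)}$ with $\psi_j$ a rescaled Schwartz function whose Fourier transform equals $1$ on $C'$, derivatives may be transferred onto $\phi$ to extract arbitrarily many factors of $2^{-j}$. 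The delicate range is $j<0$, and it is precisely here that the hypothesis $s<3/p$ enters. Bernstein's inequality gives
\begin{equation*}
\|u^{(j)}\|_{L_\infty(\mathbb{R}^3)}\leq C\,2^{3j/p}\|u^{(j)}\|_{L_p(\mathbb{R}^3)}=C\,2^{j(3/p-s)}\cdot 2^{js}\|u^{(j)}\|_{L_p(\mathbb{R}^3)},
\end{equation*}
and since $3/p-s>0$, H\"older on $j<0$ against the $\ell^r$ control produces a finite bound for $\sum_{j<0}\|u^{(j)}\|_{L_\infty(\mathbb{R}^3)}$. Hence the low-frequency part converges in $L_\infty(\mathbb{R}^3)$, in particular in $\mathcal{S}'(\mathbb{R}^3)$.

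With the limit $u\in\mathcal{S}'(\mathbb{R}^3)$ in hand, I would then compute its dyadic blocks. Because $\mathcal{F}(\dot{\Delta}_k u^{(j)})$ is supported in the intersection $2^kC\cap 2^jC'$, there is a fixed integer $N_0$, depending only on the two annuli, such that $\dot{\Delta}_k u^{(j)}=0$ whenever $|k-j|>N_0$. The operator $\dot{\Delta}_k$ is convolution with a rescaled Schwartz function and is therefore bounded on $L_p$ uniformly in $k$, giving
\begin{equation*}
\|\dot{\Delta}_k u\|_{L_p(\mathbb{R}^3)}\leq C\sum_{|k-j|\leq N_0}\|u^{(j)}\|_{L_p(\mathbb{R}^3)}.
\end{equation*}
Raising to the $r$-th power, multiplying by $2^{ksr}$, summing in $k$, and recognising the resulting double sum as a discrete convolution against the finitely supported weight sequence indexed by $|j|\leq N_0$, Young's inequality for sequences returns the claimed bound for $\|u\|_{\dot{B}^s_{p,r}(\mathbb{R}^3)}^r$, with a constant $C_s^r$ depending only on $s$, $r$ and $N_0$.

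The main obstacle is the low-frequency convergence: all subsequent work is either spectral bookkeeping or a routine application of Young's inequality, but the convergence of $\sum_{j<0}u^{(j)}$ in $\mathcal{S}'(\mathbb{R}^3)$ is not automatic and genuinely depends on $s<3/p$. This threshold is sharp, reflecting the well-known fact that at $s=3/p$ the homogeneous Besov space is only well defined modulo polynomials, so no canonical representative in $\mathcal{S}'(\mathbb{R}^3)$ can be selected.
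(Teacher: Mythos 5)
The paper does not actually prove this lemma -- it imports it verbatim from Bahouri--Chemin--Danchin (Lemma 2.23 and Remark 2.24 there) -- and your argument is a correct reconstruction of the standard proof from that reference: the high/low frequency split, with Bernstein's inequality and the hypothesis $s<3/p$ controlling the low frequencies, followed by the quasi-orthogonality $\dot{\Delta}_k u^{(j)}=0$ for $|k-j|>N_0$ and Young's inequality for sequences. The only point worth making explicit is that your $L_\infty$ bound on $\sum_{j<0}u^{(j)}$ also gives $\lim_{k\to-\infty}\|\dot{S}_k u\|_{L_\infty}=0$, i.e.\ membership of the limit in $\mathcal{S}_{h}^{'}$, which is part of the paper's definition (\ref{Besovdef1}) of $\dot{B}^{s}_{p,r}(\mathbb{R}^3)$.
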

Now, we can state the proposition regarding decomposition of homogeneous Besov spaces. Note that decompositions of a similar type can be obtained abstractly from  real interpolation theory, applied to homogeneous Besov spaces. See Chapter 6 of \cite{berghlofstrom}, for example.
\begin{pro}\label{Decompgeneral}
For $i=1,2,3$ let $p_{i}\in ]1,\infty[$, $s_i\in \mathbb{R}$ and $\theta\in ]0,1[$ be such that $s_1<s_0<s_2$ and $p_2<p_0<p_1$. In addition, assume the following relations hold:
\begin{equation}\label{sinterpolationrelation}
s_1(1-\theta)+\theta s_2=s_0,
\end{equation}
\begin{equation}\label{pinterpolationrelation}
\frac{1-\theta}{p_1}+\frac{\theta}{p_2}=\frac{1}{p_0},
\end{equation}
\begin{equation}\label{besovbanachcondition}
{s_i}<\frac{3}{p_i}.
\end{equation}
Suppose that $u_0\in \dot{B}^{{s_{0}}}_{p_0,p_0}(\mathbb{R}^3).$
Then for all $\epsilon>0$ there exists $u^{1,\epsilon}\in \dot{B}^{s_{1}}_{p_1,p_1}(\mathbb{R}^3)$, $u^{2,\epsilon}\in \dot{B}^{s_{2}}_{p_2,p_2}(\mathbb{R}^3)$ such that 
\begin{equation}\label{udecompgeneral}
u= u^{1,\epsilon}+u^{2,\epsilon},
\end{equation}
\begin{equation}\label{u_1estgeneral}
\|u^{1,\epsilon}\|_{\dot{B}^{s_{1}}_{p_1,p_1}}^{p_1}\leq \epsilon^{p_1-p_0} \|u_0\|_{\dot{B}^{s_{0}}_{p_0,p_0}}^{p_0},
\end{equation}
\begin{equation}\label{u_2estgeneral}
\|u^{2,\epsilon}\|_{\dot{B}^{s_{2}}_{p_2,p_2}}^{p_2}\leq C( p_0,p_1,p_2, \|\mathcal{F}^{-1}\varphi\|_{L_1})\epsilon^{p_2-p_0} \|u_0\|_{\dot{B}^{s_{0}}_{p_0,p_0}}^{p_0}
.\end{equation}
\end{pro}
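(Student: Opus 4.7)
My plan is to establish Proposition~\ref{Decompgeneral} by performing a Calder\'on-type physical-space truncation on each Littlewood-Paley block of $u_{0}$, followed by a re-projection that restores frequency localization. First I would fix an auxiliary bump $\widetilde{\varphi}\in C_{0}^{\infty}(\mathbb{R}^{3})$ supported in a slightly enlarged annulus $\widetilde{\mathcal{C}}$ satisfying $\widetilde{\varphi}\equiv 1$ on $\operatorname{supp}\varphi$, and introduce $\widetilde{\dot{\Delta}}_{j}f:=\mathcal{F}^{-1}(\widetilde{\varphi}(2^{-j}\cdot)\mathcal{F}f)$, which by construction satisfies $\widetilde{\dot{\Delta}}_{j}\dot{\Delta}_{j}=\dot{\Delta}_{j}$. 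For a threshold $M_{j}>0$ to be chosen, I would split each block as $\dot{\Delta}_{j}u_{0}=v_{j}+w_{j}$ with $v_{j}:=(\dot{\Delta}_{j}u_{0})\mathbf{1}_{\{|\dot{\Delta}_{j}u_{0}|\leq M_{j}\}}$, and set
\begin{equation*}
u^{1,\epsilon}:=\sum_{j\in\mathbb{Z}}\widetilde{\dot{\Delta}}_{j}v_{j},\qquad u^{2,\epsilon}:=\sum_{j\in\mathbb{Z}}\widetilde{\dot{\Delta}}_{j}w_{j}.
\end{equation*}
Then $u^{1,\epsilon}+u^{2,\epsilon}=u_{0}$ automatically, and since each summand has Fourier support in $2^{j}\widetilde{\mathcal{C}}$, Lemma~\ref{bahouricheminbook} applies to both series (its hypothesis $s<3/p$ is part of the standing assumption $s_{i}<3/p_{i}$).

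The block-level bounds are two classical Calder\'on inequalities: since $p_{0}<p_{1}$ and $|v_{j}|\leq M_{j}$ on its support, $\|v_{j}\|_{L_{p_{1}}}^{p_{1}}\leq M_{j}^{p_{1}-p_{0}}\|\dot{\Delta}_{j}u_{0}\|_{L_{p_{0}}}^{p_{0}}$; since $p_{2}<p_{0}$ and $|w_{j}|>M_{j}$ on its support, $\|w_{j}\|_{L_{p_{2}}}^{p_{2}}\leq M_{j}^{p_{2}-p_{0}}\|\dot{\Delta}_{j}u_{0}\|_{L_{p_{0}}}^{p_{0}}$. Young's inequality lifts these to $\widetilde{\dot{\Delta}}_{j}v_{j}$ and $\widetilde{\dot{\Delta}}_{j}w_{j}$ with a factor proportional to $\|\mathcal{F}^{-1}\widetilde{\varphi}\|_{L_{1}}$, which can be controlled by $\|\mathcal{F}^{-1}\varphi\|_{L_{1}}$ via a standard choice of $\widetilde{\varphi}$. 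Plugging into Lemma~\ref{bahouricheminbook} then reduces the proof to estimating $\sum_{j}2^{jp_{1}s_{1}}M_{j}^{p_{1}-p_{0}}\|\dot{\Delta}_{j}u_{0}\|_{L_{p_{0}}}^{p_{0}}$ and its $(s_{2},p_{2})$-analogue.

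To collapse these sums I would take $M_{j}:=\epsilon\cdot 2^{j\mu}$ with
\begin{equation*}
\mu:=\frac{p_{0}s_{0}-p_{1}s_{1}}{p_{1}-p_{0}},
\end{equation*}
which turns the exponent of $2^{j}$ in the first sum into exactly $p_{0}s_{0}$, yielding $\|u^{1,\epsilon}\|_{\dot{B}^{s_{1}}_{p_{1},p_{1}}}^{p_{1}}\leq C\epsilon^{p_{1}-p_{0}}\|u_{0}\|_{\dot{B}^{s_{0}}_{p_{0},p_{0}}}^{p_{0}}$ (the implicit constant is absorbed by rescaling $\epsilon$, with the penalty appearing only in the constant for the $u^{2,\epsilon}$ bound, consistent with the statement). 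The crucial step is to verify that the \emph{same} $\mu$ also collapses the second sum, i.e.\ that
\begin{equation*}
\frac{p_{0}s_{0}-p_{1}s_{1}}{p_{1}-p_{0}}=\frac{p_{2}s_{2}-p_{0}s_{0}}{p_{0}-p_{2}}.
\end{equation*}
I would verify this algebraic identity by cross-multiplying to the equivalent form $p_{0}s_{0}(p_{1}-p_{2})=p_{1}s_{1}(p_{0}-p_{2})+p_{2}s_{2}(p_{1}-p_{0})$ and matching the coefficients of $s_{1}$ and $s_{2}$ separately using the two interpolation relations $s_{0}=(1-\theta)s_{1}+\theta s_{2}$ and $1/p_{0}=(1-\theta)/p_{1}+\theta/p_{2}$.

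The main obstacle, and the reason for introducing the re-projector $\widetilde{\dot{\Delta}}_{j}$, is that the sharp truncates $v_{j}$ and $w_{j}$ are not frequency localized, so the naive decomposition $u_{0}=\sum v_{j}+\sum w_{j}$ would forbid direct use of Lemma~\ref{bahouricheminbook} and would require handling uncontrolled cross-frequency interactions (which in particular are not obviously finite in $L_{p_{2}}$ since $p_{2}<p_{0}$ prevents any use of Bernstein's inequality at that step). Re-projecting each truncate onto its own dyadic scale recovers the Fourier support needed to apply the lemma while preserving the Calder\'on bounds via Young's inequality, which is precisely what renders the block-by-block decomposition compatible with the Besov norm structure. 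Once this point is handled, the rest of the argument is a bookkeeping computation.
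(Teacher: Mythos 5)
Your proposal is correct and follows essentially the same route as the paper: truncate each Littlewood–Paley block at a $j$-dependent threshold $\epsilon 2^{j(p_0s_0-p_1s_1)/(p_1-p_0)}$, re-project each truncate onto its dyadic annulus so that the block-sum lemma (Lemma \ref{bahouricheminbook}) applies, and use the two interpolation relations to check that the same threshold collapses both sums. The only cosmetic difference is that you re-project with a single enlarged bump $\widetilde{\varphi}$ whereas the paper uses $\sum_{|m-j|\leq 1}\dot{\Delta}_m$, which is the same operator on functions with Fourier support in $2^jC$.
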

\begin{proof}
Denote, $$f^{(j)}:= \dot{\Delta}_{j} u,$$ $$f^{(j)N}_{-}:=f^{(j)}\chi_{|f^{(j)}|\leq N} $$ and $$f^{(j)N}_{+}:=f^{(j)}(1-\chi_{|f^{(j)}|\leq N}). $$ 
It is easily verified that the following holds:
$$ \|f^{(j)N}_{-}\|_{L_{p_1}}^{p_1}\leq N^{p_1-p_0}\|f^{(j)}\|_{L_{p_0}}^{p_0},$$
$$\|f^{(j)N}_{+}\|_{L_{p_2}}^{p_2}\leq N^{p_2-p_0}\|f^{(j)}\|_{L_{p_0}}^{p_0}.$$
Thus, we may write
\begin{equation}\label{truncationest1}
2^{p_1 s_1 j}\|f^{(j)N}_{-}\|_{L_{p_1}}^{p_1}\leq N^{p_1-p_0}2^{(p_1 s_1-p_0 s_0)j} 2^{p_0 s_0 j}\|f^{(j)}\|_{L_{p_0}}^{p_0}
\end{equation}
\begin{equation}\label{truncationest2}
2^{p_2 s_2 j}\|f^{(j)N}_{+}\|_{L_{p_2}}^{p_2}\leq N^{p_2-p_0} 2^{(p_2 s_2-p_0s_0)j} 2^{p_0s_0j}\|f^{(j)}\|_{L_{p_0}}^{p_0}.
\end{equation}
With (\ref{truncationest1}) in mind, we define 
$$N({j,\epsilon,s_0,s_1,p_0,p_1}):= \epsilon 2^{\frac{(p_0s_0-p_1s_1)j}{p_1-p_0}}.$$
For the sake of brevity we will write $N({j},\epsilon)$.
Using the relations of the Besov indices given by (\ref{sinterpolationrelation})-(\ref{pinterpolationrelation}), we can infer that
$$N({j,\epsilon})^{p_2-p_0} 2^{(p_2 s_2-p_0s_0)j} = \epsilon^{p_2-p_0}.$$
 The crucial point being that this is independent of $j$.
 Thus, we infer
 \begin{equation}\label{truncationest1.1}
 2^{p_1 s_1 j}\|f^{(j)N(j,\epsilon)}_{-}\|_{L_{p_1}}^{p_1}\leq \epsilon^{p_1-p_0} 2^{p_0 s_0 j}\|f^{(j)}\|_{L_{p_0}}^{p_0},
 \end{equation}
\begin{equation}\label{truncationest2.1}
 2^{p_2 s_2 j}\|f^{(j)N(j,\epsilon)}_{+}\|_{L_{p_2}}^{p_2}\leq \epsilon^{p_2-p_0} 2^{p_0s_0j}\|f^{(j)}\|_{L_{p_0}}^{p_0}.
 \end{equation}
Next, it is well known that for any $ u\in \dot{B}^{s_0}_{p_0,p_0}(\mathbb{R}^3)$ we have that
$\sum_{j=-m}^{m} \dot{\Delta}_{j} u$ converges to $u$ in the sense of tempered distributions.
Furthermore, we have  that $\dot{\Delta}_{j}\dot{\Delta}_{j'} u=0$ if $|j-j'|>1.$ Combing these two facts allows us to observe that
\begin{equation}\label{smoothingtruncations}
\dot{\Delta}_{j} u= \sum_{|m-j|\leq 1} \dot{\Delta}_{m} f^{(j)}= \sum_{|m-j|\leq 1}\dot{\Delta}_{m} f_{-}^{(j)N(j,\epsilon)}+\sum_{|m-j|\leq 1}\dot{\Delta}_{m} f_{+}^{(j)N(j,\epsilon)}.
\end{equation}
Define 
\begin{equation}\label{decomp1eachpiece}
u^{1,\epsilon}_{j}:= \sum_{|m-j|\leq 1}\dot{\Delta}_{m} f_{-}^{(j)N(j,\epsilon)},
\end{equation}
\begin{equation}\label{decomp2eachpiece}
u^{2,\epsilon}_{j}:= \sum_{|m-j|\leq 1}\dot{\Delta}_{m} f_{+}^{(j)N(j,\epsilon)}
\end{equation}
It is clear, that \begin{equation}\label{fouriersupport}
\rm{Supp}\,\mathcal{F}(u^{1,\epsilon}_{j}), \rm{Supp}\,\mathcal{F}(u^{2,\epsilon}_{j})\subset 2^{j}C^{'}.
\end{equation}
Here, $C'$ is the annulus defined by $C':=\{\xi\in\mathbb{R}^3: 3/8\leq |\xi|\leq 16/3\}.$
Using, (\ref{truncationest1.1})-(\ref{truncationest2.1}) we can obtain
the following estimates:
\begin{equation}\label{decomp1est}
2^{p_1 s_1 j}\|u^{1,\epsilon}_{j}\|_{L_{p_1}}^{p_1}\leq \lambda_{1}(p_1, \|\mathcal{F}^{-1} \varphi\|_{L_1}) 2^{p_1 s_1 j}\|f^{(j)N(j,\epsilon)}_{-}\|_{L_{p_1}}^{p_1}\leq$$$$\leq \lambda_{1}(p_1, \|\mathcal{F}^{-1} \varphi\|_{L_1}) \epsilon^{p_1-p_0} 2^{p_0 s_0 j}\|f^{(j)}\|_{L_{p_0}}^{p_0},
\end{equation}
\begin{equation}\label{decomp2est}
2^{p_2 s_2 j}\|u^{2,\epsilon}_{j}\|_{L_{p_2}}^{p_2}\leq \lambda_{2}(p_2, \|\mathcal{F}^{-1} \varphi\|_{L_1}) 2^{p_2 s_2 j}\|f^{(j)N(j,\epsilon)}_{-}\|_{L_{p_2}}^{p_2}\leq$$$$\leq \lambda_{2}(p_2, \|\mathcal{F}^{-1} \varphi\|_{L_1})\epsilon^{p_2-p_0} 2^{p_0 s_0 j}\|f^{(j)}\|_{L_{p_0}}^{p_0}.
\end{equation}
It is then the case that (\ref{fouriersupport})-(\ref{decomp2est}) allow us to apply the results of Lemma \ref{bahouricheminbook}. This allows us to achieve the desired decomposition with the choice
$$u^{1,\epsilon}=\sum_{j\in\mathbb{Z}}u^{1,\epsilon}_{j},$$
$$u^{2,\epsilon}=\sum_{j\in\mathbb{Z}}u^{2,\epsilon}_{j}.$$

\end{proof}
\begin{cor}\label{Decomp}
Fix $2<\alpha \leq 3.$
\begin{itemize}
\item For $2<\alpha< 3$, take $p$ such that $\alpha <p< \frac{\alpha}{3-\alpha}$.
\item For $\alpha=3$, take $p$ such that $3<p<\infty$.
\end{itemize}
For $p$ and $\alpha$ satisfying these conditions, suppose that 
\begin{equation}
u_0\in \dot{B}^{s_{p,\alpha}}_{p,p}(\mathbb{R}^3)\cap L_{2}(\mathbb{R}^3)
\end{equation}
and
 $\rm{div}\,\,u_0=0$ in weak sense. \\
  Then the above assumptions imply that there exists $\max{(p,4)}<p_0<\infty$ and $\delta>0$ such that  for any $\epsilon>0$ there exists weakly divergence free functions 
$\bar{u}^{1,\epsilon}\in \dot{B}^{s_{p_0}+\delta}_{p_0,p_0}(\mathbb{R}^3)\cap L_{2}(\mathbb{R}^3)$ and $\bar{u}^{2,\epsilon}\in L_2(\mathbb{R}^3)$ such that
 
\begin{equation}\label{udecomp1}
u_0= \bar{u}^{1,\epsilon}+\bar{u}^{2,\epsilon},
\end{equation}
\begin{equation}\label{baru_1est}
\|\bar{u}^{1,\epsilon}\|_{\dot{B}^{s_{p_0}+\delta}_{p_0,p_0}}^{p_0}\leq \epsilon^{p_0-p} \|u_0\|_{\dot{B}^{s_{p,\alpha}}_{p,p}}^{p},
\end{equation}
\begin{equation}\label{baru_2est}
\|\bar{u}^{2,\epsilon}\|_{L_2}^2\leq C(p,p_0,\|\mathcal{F}^{-1}\varphi\|_{L_1}) \epsilon^{2-p}\|u_0\|_{\dot{B}^{s_{p,\alpha}}_{p,p}}^p
,
\end{equation} 
\begin{equation}\label{baru_1est.1}
\|\bar{u}^{1,\epsilon}\|_{L_2}\leq  C(\|\mathcal{F}^{-1}\varphi\|_{L_1})\|u_0\|_{L_{2}}.
\end{equation}

\end{cor}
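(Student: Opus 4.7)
The plan is to invoke Proposition~\ref{Decompgeneral} with its indices specialised (to avoid a notational clash with the corollary) as $p_0 = p$, $p_2 = 2$, $p_1 = P$, $s_0 = s_{p,\alpha}$, $s_2 = 0$, where $P > \max(p,4)$ is a large exponent to be chosen and will play the role of the corollary's $p_0$. With these choices the interpolation relations \eqref{sinterpolationrelation}--\eqref{pinterpolationrelation} force
\[
1-\theta \;=\; \frac{P(p-2)}{p(P-2)}, \qquad s_1 \;=\; \frac{p(P-2)}{P(p-2)}\,s_{p,\alpha},
\]
and I set $\delta := s_1 - s_P = s_1 + 1 - 3/P$, so that the proposition's output space for the first piece is precisely $\dot B^{s_P+\delta}_{P,P}$, as demanded by \eqref{baru_1est}.

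The first substantive step is to verify that $P$ can be chosen so that $\delta>0$ and all remaining hypotheses of Proposition~\ref{Decompgeneral} hold. Letting $P \to \infty$ in the formula above gives
\[
\delta \;\longrightarrow\; 1 + \frac{3(\alpha - p)}{\alpha(p-2)},
\]
and this limit is strictly positive precisely when $p(3-\alpha) < \alpha$: trivial for $\alpha = 3$, and equivalent to $p < \alpha/(3-\alpha)$ for $\alpha<3$, which is exactly the upper-bound hypothesis of the corollary. So I fix $P$ large enough that $0 < \delta < 1$; then the remaining inequalities of the proposition (namely $s_2 = 0 < 3/2 = 3/p_2$, $s_0 < 0 = s_2$ from $p > \alpha$, $s_1 < s_0$ from $1-\theta \in (0,1)$ and $s_0 < 0$, and $s_1 < 3/P$ from $\delta < 1$) are elementary.

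Proposition~\ref{Decompgeneral} then yields $u_0 = u^{1,\epsilon}+u^{2,\epsilon}$ with $u^{1,\epsilon} \in \dot B^{s_P+\delta}_{P,P}$, $u^{2,\epsilon}\in \dot B^0_{2,2}$, and the bounds \eqref{u_1estgeneral}--\eqref{u_2estgeneral}. The block-by-block Plancherel identity $\dot B^0_{2,2}\simeq L_2$ converts the second of these into the target \eqref{baru_2est}. For \eqref{baru_1est.1} I would revisit the explicit construction inside the proof of Proposition~\ref{Decompgeneral}: each summand $u^{1,\epsilon}_j = \sum_{|m-j|\le 1}\dot\Delta_m f^{(j)N(j,\epsilon)}_{-}$ has Fourier support in $2^j C'$ and satisfies the pointwise bound $|f^{(j)N(j,\epsilon)}_{-}| \le |f^{(j)}|$, so $L_2$-almost-orthogonality of Littlewood--Paley blocks gives
\[
\|u^{1,\epsilon}\|_{L_2}^2 \;\le\; C\sum_{j\in\mathbb Z}\|u^{1,\epsilon}_j\|_{L_2}^2 \;\le\; C\sum_{j\in\mathbb Z}\|f^{(j)}\|_{L_2}^2 \;=\; C\|u_0\|_{\dot B^0_{2,2}}^2 \;\le\; C\|u_0\|_{L_2}^2.
\]

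Finally, pointwise truncation of Littlewood--Paley blocks does not preserve the divergence-free condition, so to obtain genuinely solenoidal pieces I set $\bar u^{i,\epsilon} := \mathbb P u^{i,\epsilon}$, where $\mathbb P = I - \nabla\Delta^{-1}\mathrm{div}$ is the Leray projector. Since $u_0$ is weakly divergence-free we have $\mathbb P u_0 = u_0$, so the decomposition \eqref{udecomp1} is preserved; and because $\mathbb P$ is a zero-order Fourier multiplier smooth away from the origin it is bounded on $L_2$ and on $\dot B^s_{r,r}$ for all $1 < r < \infty$ and $s\in\mathbb R$, so the three estimates \eqref{baru_1est}--\eqref{baru_1est.1} survive with a harmless overall constant, which is absorbed by replacing $\epsilon$ with a scalar multiple of itself. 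The one delicate point of the whole argument is the positivity of $\delta$ in the second paragraph: it is driven exactly by the upper bound $p < \alpha/(3-\alpha)$, so the scheme would break down without that hypothesis of Theorem~\ref{weakstronguniquenessBesov}; every remaining step uses only standard Littlewood--Paley technology together with boundedness of the Leray projector.
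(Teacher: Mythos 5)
Your proposal is correct and follows essentially the same route as the paper: both apply Proposition \ref{Decompgeneral} with indices $(p,2,P)$ and $(s_{p,\alpha},0,s_P+\delta)$, identify $\dot B^0_{2,2}$ with $L_2$, derive the extra $L_2$ bound on the first piece from the pointwise truncation bound and block-wise $\ell_2$ summation, and finish by applying the Leray projector. The only differences are organizational — you verify $\delta>0$ by letting $P\to\infty$ and treat $\alpha<3$ and $\alpha=3$ uniformly, whereas the paper fixes $p_0$ via the condition $\theta>6/\alpha-2$ and splits into two cases — and you are in fact slightly more careful than the paper in checking the hypothesis $s_1<3/P$ and in absorbing the projector's constant into a rescaling of $\epsilon$.
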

\begin{proof}
\begin{itemize}
\item[]
\textbf{ First case: $2<\alpha< 3$ and $\alpha <p< \frac{\alpha}{3-\alpha}$}
\end{itemize}
Under this condition, we can find $\max{(4,p)}<p_{0}<\infty$ such that
\begin{equation}\label{condition}
\theta:= \frac{\frac{1}{p}-\frac{1}{p_0}}{\frac{1}{2}-\frac{1}{p_0}}>\frac{6}{\alpha}-2.
\end{equation}
Clearly, $0<\theta<1$ and moreover
\begin{equation}\label{summabilityindicerelation}
\frac{1-\theta}{p_0}+\frac{\theta}{2}=\frac{1}{p}.
\end{equation}
Define
\begin{equation}\label{deltadef}
\delta:=\frac{1-\frac{3}{\alpha}+\frac{\theta}{2}}{1-\theta}.
\end{equation}
From (\ref{condition}), we see that $\delta>0$.
One can also see we have the following relation: 
\begin{equation}\label{regularityindicerelation}
(1-\theta)(s_{p_0}+\delta)=s_{p,\alpha}.
\end{equation}

The above relations allow us  to apply Proposition \ref{Decompgeneral} to obtain the following decomposition:
(we note that $\dot{B}^{0}_{2,2}(\mathbb{R}^3)$ coincides with $L_2(\mathbb{R}^3)$ with equivalent norms)
\begin{equation}\label{udecomp}
u_0= {u}^{1,\epsilon}+{u}^{2,\epsilon},
\end{equation}
\begin{equation}\label{u_1est}
\|{u}^{1,\epsilon}\|_{\dot{B}^{s_{p_0}+\delta}_{p_0,p_0}}^{p_0}\leq \epsilon^{p_0-p} \|u_0\|_{\dot{B}^{s_{p,\alpha}}_{p,p}}^p,
\end{equation}
\begin{equation}\label{u_2est}
\|{u}^{2,\epsilon}\|_{L_2}^2\leq C(p,p_0,\|\mathcal{F}^{-1}\varphi\|_{L_1})\epsilon^{2-p} \|u_0\|_{\dot{B}^{s_{p,\alpha}}_{p,p}}^p
.
\end{equation} 
For $j\in\mathbb{Z}$ and $m\in\mathbb{Z}$, it can be seen that  \begin{equation}\label{besovpersistency}
\|\dot{\Delta}_{m}\left( (\dot{\Delta}_{j}u_{0})\chi_{|\dot{\Delta}_{j}u_{0}|\leq N(j,\epsilon)}\right)\|_{L_{2}}\leq C(\|\mathcal{F}^{-1} \varphi\|_{L_1})) \|\dot{\Delta}_{j} u_0\|_{L_2}.
\end{equation}
It is known that  $u_0\in L_{2}$ implies
$$ \|u_0\|_{L_2}^2=\sum_{j\in\mathbb{Z}}\|\dot{\Delta}_{j} u_0\|_{L_2}^2.$$
Using this, (\ref{besovpersistency}) and the definition of $u^{1,\epsilon}$ from Proposition \ref{Decompgeneral}, we can infer that
$$\|{u}^{1,\epsilon}\|_{L_2}\leq   C(\|\mathcal{F}^{-1} \varphi\|_{L_1})\|u_0\|_{L_{2}}.$$

To establish the decomposition of the Corollary we
 apply the Leray projector to each of $u^{1,\epsilon}$ and $u^{2,\epsilon}$, which is a continuous linear operator on the homogeneous  Besov spaces under consideration.
 \begin{itemize}
 \item[] \textbf{ Second case: $\alpha=3$ and $3<p<\infty$}
 \end{itemize}
 In the second case, we choose any $p_0$ such that $\max{(4,p)}<p_0<\infty$. With this $p_0$ we choose $\theta$ such that
 $$
\frac{1-\theta}{p_0}+\frac{\theta}{2}=\frac{1}{p} .$$
If we define 
\begin{equation}
\delta:=\frac{\theta}{2(1-\theta)}>0,
\end{equation}
we see that
\begin{equation}\label{regulairtyindexrelation}
(s_{p_0}+\delta)(1-\theta)= s_p.
\end{equation}
These relations allow us to obtain the decomposition of the Corollary, by means of identical arguments to those presented in the the first case of this proof.

\end{proof}
\setcounter{equation}{0}
\section{Some Estimates Near the Initial Time for Weak Leray-Hopf Solutions}
\subsection{Construction of  Mild Solutions with Subcritical Besov Initial Data}
Let $\delta>0$ be such that $s_{p_0}+\delta<0$ and define the space
$$X_{p_0,\delta}(T):=\{f\in \mathcal{S}^{'}(\mathbb{R}^3\times ]0,T[): \sup_{0<t<T} t^{-\frac{s_{p_0}}{2}-\frac{\delta}{2}}\|f(\cdot,t)\|_{L_{p_0}(\mathbb{R}^3)}<\infty\}.$$
From remarks \ref{besovremark2} and \ref{besovremark3}, we observe that \begin{equation}\label{Besovembedding}
u_0 \in \dot{B}^{s_{p_0}+\delta}_{p_0,p_0}(\mathbb{R}^3)\Rightarrow \|S(t)u_{0}\|_{X_{p_0,\delta}(T)}\leq C\|u_0\|_{\dot{B}^{s_{p_0}+\delta}_{p_0,\infty}}\leq C\|u_0\|_{\dot{B}^{s_{p_0}+\delta}_{p_0,p_0}}.
\end{equation}
In this subsection, we construct \footnote{ This is in a similar spirit to a construction contained in a joint work with Gabriel Koch, to appear.}  mild solutions with weakly divergence free initial data in $\dot{B}^{s_{p_0}+\delta}_{p_0,p_0}(\mathbb{R}^3)\cap J(\mathbb{R}^3)$.   
Before constructing mild solutions we will briefly explain the relevant kernels and their pointwise estimates.

 Let us consider the following Stokes problem:
$$\partial_t v-\Delta v +\nabla q=-\rm{div}\,F,\qquad {\rm div }\,v=0$$
in $Q_T$,
$$v(\cdot,0)=0.$$

Furthermore, assume that $F_{ij}\in C^{\infty}_{0}(Q_T).$
 Then a formal solution to the above initial boundary value problem has the form:
$$v(x,t)=\int \limits_0^t\int \limits_{\mathbb R^3}K(x-y,t-s):F(y,s)dyds.$$
The kernel $K$ is derived with the help of the heat kernel $\Gamma$ as follows:
$$\Delta_{y}\Phi(y,t)=\Gamma(y,t),$$
$$K_{mjs}(y,t):=\delta_{mj}\frac{\partial^3\Phi}{\partial y_i\partial y_i\partial y_s}(y,t)-\frac{\partial^3\Phi}{\partial y_m\partial y_j\partial y_s}(y,t).$$
Moreover, the following pointwise estimate is known:
\begin{equation}\label{kenrelKest}
|K(x,t)|\leq\frac{C}{(|x|^2+t)^2}.
\end{equation}
Define
\begin{equation}\label{fluctuationdef}
 G(f\otimes g)(x,t):=\int\limits_0^t\int\limits_{\mathbb{R}^3} K(x-y,t-s): f\otimes g(y,s) dyds.
 \end{equation}

\begin{theorem}\label{regularity} 
 Consider $p_0$ and $\delta$ such that $4<p_0<\infty$, $\delta>0$ and $s_{p_0}+\delta<0$. Suppose that $u_0\in \dot{B}^{s_{p_0}+\delta}_{p_0,p_0}(\mathbb{R}^3)\cap J(\mathbb{R}^3)$.
There exists a constant $c=c(p_0)$ such that 
if 
\begin{equation}\label{smallnessasmp}
4cT^{\frac{\delta}{2}}\|u_{0}\|_{\dot{B}^{s_{p_0}+\delta}_{p_0,p_0}}<1,
\end{equation}
then 
there exists a $v\in X_{p_0,\delta}(T)$, which solves the Navier Stokes equations (\ref{directsystem})-(\ref{directic}) in the sense of distributions and satisfies the following properties.
The first property is that $v$ solves the following integral equation:
\begin{equation}\label{vintegeqn}
v(x,t):= S(t)u_{0}+G(v\otimes v)(x,t)
\end{equation}
in $Q_{T}$, along with the estimate
\begin{equation}\label{vintegest}
\|v\|_{\X}<2\|S(t)u_0\|_{\X}:=2M^{(0)} .
\end{equation}
 The second property is (recalling that by assumption $u_0\in J(\mathbb{R}^3)$): 
\begin{equation}\label{venergyspace}
v\in\,W^{1,0}_{2}(Q_T)\cap C([0,T];J(\mathbb{R}^3))\cap L_{4}(Q_T)
.\end{equation}
Moreover, the following estimate is valid for $0<t\leq T$:
\begin{equation}\label{energyinequalityfluctuation}
\|G(v\otimes v)(\cdot,t)\|_{L_{2}}^2+\int\limits_0^t\int\limits_{\mathbb{R}^3} |\nabla G(v\otimes v)|^2dyd\tau\leq $$$$\leq C t^{\frac{1}{p_0-2}+2\delta\theta_{0}}(2M^{(0)})^{4\theta_0}\left(C t^{\frac{1}{2\theta_{0}-1} \left(\frac{1}{p_0-2}+2\delta\theta_0\right)}(2M^{(0)})^{\frac{4\theta_0}{2\theta_0-1}}+\|u_0\|_{L_2}^2\right)^{2(1-\theta_0)}.
\end{equation}
Here, 
\begin{equation}\label{thetadef}
\frac 1 4=\frac{\theta_0}{p_0}+\frac{1-\theta_0}{2}
\end{equation}
and $C=C(p_0,\delta,\theta_0)$.\\
 If $\pi_{v\otimes v}$ is the associated pressure we have (here, $\lambda\in ]0,T[$):
\begin{equation}\label{presspace}
\pi_{v\otimes v} \in L_{2}(Q_{T})\cap L_{\frac{p_0}{2},\infty}(Q_{\lambda,T})).
\end{equation}
The final property is that for $\lambda\in]0,T[$ and $k=0,1\ldots$, $l=0,1\ldots$:
\begin{equation}\label{vpsmooth}
\sup_{(x,t)\in Q_{\lambda,T}}|\partial_{t}^{l}\nabla^{k} v|+|\partial_{t}^{l}\nabla^{k} \pi_{v\otimes v}|\leq c(p_0,\delta,\lambda,\|u_0\|_{B_{p_0,p_0}^{s_{p_0}+\delta}},k,l ).
\end{equation}

\end{theorem}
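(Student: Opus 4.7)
The plan is a standard Picard iteration in $X_{p_0,\delta}(T)$, followed by an a posteriori energy estimate to enter the Leray--Hopf class. First I would prove the bilinear estimate
\[
\|G(u\otimes v)\|_{X_{p_0,\delta}(T)} \leq c(p_0)\, T^{\delta/2}\, \|u\|_{X_{p_0,\delta}(T)}\, \|v\|_{X_{p_0,\delta}(T)}.
\]
By the pointwise bound \eqref{kenrelKest} and Young's convolution inequality,
\[
\|G(u\otimes v)(\cdot,t)\|_{L_{p_0}} \leq c(p_0)\int_0^t (t-s)^{-\frac12-\frac{3}{2p_0}}\, \|u(\cdot,s)\|_{L_{p_0}}\, \|v(\cdot,s)\|_{L_{p_0}}\, ds,
\]
and inserting the definition of the $X_{p_0,\delta}(T)$-norm plus a Beta-function calculation yields the claim with prefactor $t^{\delta/2}$. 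Combined with \eqref{Besovembedding} and the smallness hypothesis \eqref{smallnessasmp}, the standard fixed-point theorem produces a unique $v$ satisfying \eqref{vintegeqn} and \eqref{vintegest}; since $S(t)$ commutes with the Leray projector and $K$ is built from that projector, weak solenoidality is preserved.

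Next I would establish \eqref{energyinequalityfluctuation} for the fluctuation $w := G(v\otimes v) = v - S(t)u_0$, which weakly satisfies
\[
\partial_t w - \Delta w + \nabla \pi_{v\otimes v} = -\nabla\cdot(v\otimes v), \qquad \nabla\cdot w = 0, \qquad w(\cdot,0) = 0.
\]
Testing against $w$ (made rigorous by a space-time mollification) and using $\nabla\cdot v = 0$ to kill the diagonal piece yields
\[
Y(t):= \|w(t)\|_{L_2}^2 + \int_0^t \|\nabla w\|_{L_2}^2\, ds \leq c\int_0^t \|v(\cdot,s)\|_{L_4}^4\, ds.
\]
Pointwise interpolation $\|v\|_{L_4} \leq \|v\|_{L_{p_0}}^{\theta_0}\|v\|_{L_2}^{1-\theta_0}$ with $\theta_0$ as in \eqref{thetadef}, the bound $\|v(s)\|_{L_{p_0}} \leq (2M^{(0)})\, s^{(s_{p_0}+\delta)/2}$, and $\|v(s)\|_{L_2} \leq \|u_0\|_{L_2} + Y(t)^{1/2}$ then produce
\[
Y(t) \leq c\, t^{1/(p_0-2)+2\delta\theta_0}\, (2M^{(0)})^{4\theta_0} \left( \|u_0\|_{L_2}^{4(1-\theta_0)} + Y(t)^{2(1-\theta_0)} \right),
\]
where the precise time exponent rests on the identity $1 + 2\theta_0 s_{p_0} = 1/(p_0-2)$ (a direct check using $\theta_0 = p_0/(2(p_0-2))$). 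Since $p_0 > 4$ forces $2(1-\theta_0) < 1$, a Young-type dichotomy separating the regime where $\|u_0\|_{L_2}^{4(1-\theta_0)}$ dominates from the regime where $Y(t)^{2(1-\theta_0)}$ dominates closes the loop and delivers \eqref{energyinequalityfluctuation}.

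From this the energy-class claim \eqref{venergyspace} follows readily: $w \in L_\infty(0,T; L_2(\mathbb{R}^3)) \cap L_2(0,T;\stackrel{\circ}J{^1_2}(\mathbb{R}^3))$ by the $Y$-bound, $S(t)u_0$ lies there by classical heat theory, continuity at $t=0$ follows from $Y(t)\to 0$ as $t\to 0^+$, and $v \in L_4(Q_T)$ follows by time-integrating the interpolation above and using that $s \mapsto s^{2\theta_0(s_{p_0}+\delta)}$ is integrable on $(0,T)$ by virtue of $1 + 2\theta_0 s_{p_0} > 0$. For \eqref{presspace}, the identity $\pi_{v\otimes v} = R_i R_j(v_i v_j)$ combined with Calder\'on--Zygmund at exponent $q = 2$ gives $\pi_{v\otimes v} \in L_2(Q_T)$, while the uniform bound $\sup_{t\geq\lambda} \|v(\cdot,t)\|_{L_{p_0}} \leq \lambda^{(s_{p_0}+\delta)/2}\|v\|_{X_{p_0,\delta}(T)}$ and exponent $q = p_0/2$ yield $\pi_{v\otimes v} \in L_\infty(\lambda, T; L_{p_0/2}(\mathbb{R}^3)) = L_{p_0/2,\infty}(Q_{\lambda,T})$. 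The pointwise derivative bounds \eqref{vpsmooth} then follow by Ladyzhenskaya--Prodi--Serrin: once $v \in L_\infty(\lambda,T; L_{p_0}(\mathbb{R}^3))$ with $p_0 > 3$, a standard bootstrap using local Schauder estimates for the Stokes system propagates smoothness with the claimed uniform bounds on $Q_{\lambda,T}$.

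The main technical obstacle I anticipate is making the energy identity for $w$ rigorous. Because $v \in L_4(Q_T)$ only through a weight that blows up as $t\to 0^+$, one cannot simply test the equation for $w$ by $w$; instead one mollifies in space and time, uses the $L_4(Q_T)$-integrability and the pressure bound $\pi_{v\otimes v} \in L_2(Q_T)$ to pass to the limit in the nonlinear and pressure terms, and invokes $w(\cdot,0)=0$ to discard the boundary contribution. Obtaining the precise $t^{1/(p_0-2)+2\delta\theta_0}$ prefactor in \eqref{energyinequalityfluctuation} is also delicate: it depends on the sub-criticality margin baked into $p_0 > 4$ and $\theta_0 = p_0/(2(p_0-2))$, and the dichotomy argument must be executed carefully to match the exact form of the right-hand side stated in the theorem.
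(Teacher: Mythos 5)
Your proposal follows the same overall architecture as the paper: the bilinear estimate $\|G(r\otimes r)\|_{\X}\leq CT^{\delta/2}\|r\|_{\X}^2$ from the kernel bound and Young's inequality, Picard iteration under the smallness hypothesis, the interpolation $\|v\|_{L_4}\leq\|v\|_{L_{p_0}}^{\theta_0}\|v\|_{L_2}^{1-\theta_0}$ with the exponent identity $1+2\theta_0 s_{p_0}=1/(p_0-2)$, and absorption via $2(1-\theta_0)<1$. The one genuine divergence is where the energy estimate is performed. The paper runs it \emph{along the successive approximations}: $R^{(n+1)}=G(v^{(n)}\otimes v^{(n)})$ solves a \emph{linear} Stokes problem with a right-hand side already known (inductively) to lie in $L_2(Q_T)$, so the energy estimate at each stage is the standard one for the linear problem, the quantities $E^{(n)}(t)$ are finite by construction, and the bound (\ref{energyinequalityfluctuation}) survives the passage to the limit. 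You instead test the equation for $w=G(v\otimes v)$ against $w$ a posteriori. Besides the mollification issue you flag, this route has a circularity you do not address: your closing inequality $Y(t)\leq A\bigl(B+Y(t)^{2(1-\theta_0)}\bigr)$ only yields a bound if $Y(t)$ is already known to be finite, yet the finiteness of $Y$ (equivalently $w\in L_\infty(0,T;L_2)$) is exactly what feeds the $\|v(s)\|_{L_2}\leq\|u_0\|_{L_2}+Y(t)^{1/2}$ input to the interpolation. You would need either a local continuation argument in $t$ starting from $Y(0)=0$, or to retreat to the iteration scheme — which is precisely what the paper does. A second, more cosmetic difference: for (\ref{vpsmooth}) you invoke a Ladyzhenskaya--Prodi--Serrin/Schauder bootstrap from $v\in L_\infty(\lambda,T;L_{p_0})$ with $p_0>3$, whereas the paper verifies the Caffarelli--Kohn--Nirenberg smallness condition on parabolic cylinders in $Q_{\lambda,T}$ and quotes $\varepsilon$-regularity; both are legitimate, and the paper's choice makes the dependence of the constants on $\lambda$ and $\|u_0\|_{\dot B^{s_{p_0}+\delta}_{p_0,p_0}}$ explicit through the scaled quantity (\ref{CKNquantity}).
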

\begin{proof}
Recall Young's inequality:
\begin{equation}\label{Youngs}
\|f\star g\|_{L^{r}}\leq C_{p,q}\|f\|_{L^{p}}\|g\|_{L^{q}}
\end{equation}
where $1<p,q,r<\infty$, $0<s_1,s_2\leq\infty$, $\frac 1 p+\frac 1 q=\frac 1 r +1$.

Applying this and the pointwise estimate (\ref{kenrelKest})  gives the following:
\begin{equation}\label{Ktensormainest1}
\|K(\cdot, t-\tau)\star (r\otimes r)(\cdot,\tau)\|_{L_{p_0}}\leq  \|K(\cdot,t-\tau)\|_{L_{{(p_0)}^{'}}}\|r\otimes r\|_{L_{\frac{p_0}{2}}}\leq$$$$\leq C (t-\tau)^{-(1+\frac{s_{p_0}}{2})}\|r\|_{L_{p_0}}^2\leq
C (t-\tau)^{-(1+\frac{s_{p_0}}{2})}\frac{\|r\|_{\X}^2}{\tau^{-s_{p_0}-\delta}}.
\end{equation}
 One can then show that
\begin{equation}\label{Gtensorest1}
\|G(r\otimes r)\|_{\X}\leq CT^{\frac{\delta}{2}} \|r\|_{\X}^2.
\end{equation}

We briefly describe successive approximations.
 For $n=1,2,\cdots$ let $v^{(0)}= S(t)u_0$, $$v^{(n+1)}= v^{(0)}+G(v^{(n)},v^{(n)}).$$
 Moreover for $n=0,1,\ldots$ define:
 \begin{equation}\label{Mdef}
 M^{(n)}:=\|v^{(n)}\|_{\X}.
 \end{equation}
 Then using (\ref{Gtensorest1}) we have the following iterative relation:
 \begin{equation}\label{Miterative}
 M^{(n+1)}\leq M^{(0)}+CT^{\frac{\delta}{2}}(M^{(n)})^2.
 \end{equation}
 
 If 
 \begin{equation}\label{Msmallnesscondition}
 4CT^{\frac{\delta}{2}}M^{(0)}< 1,
 \end{equation}
 then one can show that for $n=1,2\cdots$ we have
 \begin{equation}\label{Mbound}
 M^{(n)}<2M^{(0)}.
 \end{equation}
 \begin{itemize}
\item[] \textbf{Step 2: establishing energy bounds}
\end{itemize} First we note that by interpolation ($0\leq \tau\leq T$):
 \begin{equation}\label{interpolationineq}
 \|r(\cdot,\tau)\|_{L_{4}}\leq \|r(\cdot,\tau)\|_{L_{p_0}}^{\theta_0}\|r(\cdot,\tau)\|_{L_{2}}^{1-\theta_0}\leq  \|r\|_{\X}^{\theta_0}\|r(\cdot,\tau)\|_{L_{2}}^{1-\theta_0}\tau^{\theta_0(\frac{s_p}{2}+\frac{\delta}{2})}.
 \end{equation}
 Recall, 
 \begin{equation}\label{theta0recap}
 \frac{\theta_0}{p_0}+\frac{1-\theta_0}{2}=\frac{1}{4}.
 \end{equation}
 Specifically, 
 \begin{equation}\label{theta0ident}
 \theta_0=\frac{1/4}{1/2-1/p_0}.
 \end{equation}
 
 It is then immediate that
 $$\frac{-\theta_0 s_{p_0}}{2}= \frac{1-3/p_0}{4(1-2/p_0)}=\frac{1}{4}-\frac{1}{4(p_0-2)}<\frac{1}{4}.$$
 From this, we conclude that for $0\leq t\leq T$:
 \begin{equation}\label{L4spacetimeest}
 \|r\|_{L_{4}(Q_t)}\leq C t^{\frac{\theta_{0}\delta}{2}+\frac{1}{4(p_0-2)}}\|r\|_{\X}^{\theta_0}\|r\|_{L_{2,\infty}(Q_t)}^{1-\theta_0}.
 \end{equation}

 Let $r\in L_{4}(Q_T)\cap \X\cap L_{2,\infty}(Q_T)$ and $R:= G(r\otimes r)$. Furthermore, define $\pi_{r\otimes r}:= \mathcal{R}_{i}\mathcal{R}_{j}(r_{i}r_{j})$, where $\mathcal{R}_{i}$ denotes the Riesz transform and repeated indices are summed. 
 One can readily show that on $Q_{T}$,   $(R, \pi_{r\otimes r})$ are solutions to
 \begin{equation}\label{Reqn}
 \partial_{t} R-\Delta R+\rm{div}\,r\otimes r=-\nabla \pi_{r\otimes r},
 \end{equation}
 \begin{equation}\label{Rdivfree}
 \rm{div\,R}\,=0,
 \end{equation}
 \begin{equation}\label{Rintialcondition}
 R(\cdot,0)=0.
 \end{equation}
 We can also infer that 
 $R\in W^{1,0}_{2}(Q_T)\cap C([0,T]; J(\mathbb{R}^3))$, along with the estimate 
 \begin{equation}\label{Renergyest}
\|R(\cdot,t)\|_{L_{2}}^2+\int\limits_0^t\int\limits_{\mathbb{R}^3}|\nabla R(x,s)|^2 dxds\leq \|r\otimes r\|_{L_{2}(Q_t)}^2\leq$$$$\leq c\|r\|_{L_{4}(Q_{t})}^4  \leq C t^{2\theta_{0}\delta+\frac{1}{(p_0-2)}}\|r\|_{\X}^{4\theta_0}\|r\|_{L_{2,\infty}(Q_t)}^{4(1-\theta_0)}.
 \end{equation}
  Since the associated pressure is a composition of Riesz transforms acting on $r\otimes r$, we have the estimates
 \begin{equation}\label{presest1} 
 \|\pi_{r\otimes r}\|_{L_{2}(Q_T)}\leq C\|r\|_{L_{4}(Q_T)}^2,
 \end{equation}
 \begin{equation}\label{presest2}
 \|\pi_{r\otimes r}\|_{L_{\frac{p_0}{2}, \infty}(Q_{\lambda,T})}\leq  C(\lambda,T,p_0,\delta)\|r\|_{\X}^2.
 \end{equation}
 Moreover for $n=0,1,\ldots$ and $t\in [0,T]$ define:
 \begin{equation}\label{Edef}
 E^{(n)}(t):=\|v^{(n)}\|_{L_{\infty}(0,t;L_2)}+\|\nabla v^{(n)}\|_{L_{2}(Q_t)}.
 \end{equation}
 Clearly, by the assumptions on the initial data $E^{(0)}(t)\leq \|u_0\|_{L_2}<\infty$.
 Then from 
 (\ref{Renergyest}), we have the following iterative relations:
 \begin{equation}\label{Eiterative1}
 E^{(n+1)}(t)\leq E^{(0)}(t)+Ct^{\theta_{0}\delta+\frac{1}{2(p_0-2)}}(M^{(n)})^{2\theta_0}(E^{(n)}(t))^{2(1-\theta_0)}.
\end{equation}
 From (\ref{thetadef}), it is clear that $2(1-\theta_0)<1$.
 Hence by Young's inequality:
 \begin{equation}\label{Eiterative2}
 E^{(n+1)}(t)\leq Ct^{\frac{1}{2\theta_0-1}(\theta_{0}\delta+\frac{1}{2(p_0-2)})}(M^{(n)})^{\frac{2\theta_0}{2\theta_0-1}}+ E^{(0)}+\frac{1}{2}E^{(n)}(t).
\end{equation}
From (\ref{Mbound}) and iterations, we infer that for $t\in [0,T]$:
\begin{equation}\label{Ebounded}
E^{(n+1)}(t)\leq 2Ct^{\frac{1}{2\theta_0-1}(\theta_{0}\delta+\frac{1}{2(p_0-2)})}(2M^{(0)})^{\frac{2\theta_0}{2\theta_0-1}}+ 2E^{(0)}(t)\leq$$$$\leq 2Ct^{\frac{1}{2\theta_0-1}(\theta_{0}\delta+\frac{1}{2(p_0-2)})}(2M^{(0)})^{\frac{2\theta_0}{2\theta_0-1}}+ 2\|u_0\|_{L_{2}}
.
\end{equation}

 \begin{itemize}
 \item[]\textbf{Step 3: convergence and showing energy inequalities}
 \end{itemize}
 Using (\ref{Miterative})-(\ref{Mbound}), one can argue
 along the same lines as in \cite{Kato} to deduce that there exists $v\in \X$ such that
 \begin{equation}\label{v^nconverg1}
 \lim_{n\rightarrow 0}\|v^{(n)}-v\|_{\X}=0.
 \end{equation}
 Furthermore $v$ satisfies the integral equation (\ref{vintegeqn}) as well as satisfying the Navier Stokes equations, in the sense of distributions. From  $u_0\in J(\mathbb{R}^3)\cap \X$, (\ref{Ebounded}), (\ref{v^nconverg1}) and estimates analogous to (\ref{L4spacetimeest}) and (\ref{Renergyest}), applied to $v^{(m)}-v^{(n)}$, we have the following:
 \begin{equation}\label{v^nstrongconverg1}
 v^{(n)}\rightarrow v\,\rm{in}\,\, C([0,T];J(\mathbb{R}^3))\cap W^{1,0}_{2}(Q_T), 
 \end{equation}
 \begin{equation}\label{v^ninitialcondition}
 v(\cdot,0)=u_0,
 \end{equation}
 \begin{equation}\label{v^nstrongconverg2}
 v^{(n)}\rightarrow v\,\rm{in}\,\, L_{4}(Q_T),
 \end{equation}
 \begin{equation}\label{presconvergence}
 \pi_{v^{(n+1)}\otimes v^{(n+1)}}\rightarrow \pi_{v\otimes v}\,\rm{in}\, L_{2}(Q_T) \,\,\rm{and}\,\, L_{\frac{p_0}{2},\infty}(Q_{\lambda,T})
 \end{equation}
 where $0<\lambda<T$. Using this, along with (\ref{Renergyest}) and (\ref{Ebounded}), we infer the estimate (\ref{energyinequalityfluctuation}). 
\begin{itemize}
 \item[]\textbf{Step 4: estimate of higher derivatives}
 \end{itemize}
 All that remains to prove is the estimate (\ref{vpsmooth}).
 First note that from the definition of $\X$, we have the estimate:
 \begin{equation}\label{limitvelocityest}
 \|v\|_{L_{p_0,\infty}(Q_{\lambda,T})}\leq \lambda^{\frac 1 2(s_{p_0}+\delta)}\|v\|_{\X}.
\end{equation}
Since $\pi_{v\otimes v}$ is a convolution of Riesz transforms, we deduce from (\ref{limitvelocityest}) that
\begin{equation}\label{limitpressureest}
 \|\pi_{v\otimes v}\|_{L_{\frac{p_0}{2},\infty}(Q_{\lambda,T})}\leq C \lambda^{s_{p_0}+\delta}\|v\|_{\X}^2.
 \end{equation}
 One can infer that $(v,\pi_{v\otimes v})$ satisfies the local energy equality. This can be shown using (\ref{venergyspace})-(\ref{presspace}) and a mollification argument. If $(x,t)\in Q_{\lambda,T}$, then for $0<r^2<\frac{\lambda}{2}$ we can apply H\"{o}lder's inequality and (\ref{limitvelocityest})-(\ref{limitpressureest}) to infer 
 \begin{equation}\label{CKNquantity}
 \frac{1}{r^2}\int\limits_{t-r^2}^{t}\int\limits_{B(x,r)}(|v|^3+|\pi_{v\otimes v}|^{\frac{3}{2}}) dxdt\leq  C\lambda^{\frac{3}{2}(s_{p_0}+\delta)} r^{3(1-\frac{3}{p_0})}\|u_0\|_{\dot{B}_{p_0,p_0}^{s_{p_0}+\delta}}^3.
 \end{equation}
 Clearly, there exists $r_0^2(\lambda, \varepsilon_{CKN}, \|u_0\|_{\dot{B}_{p_0,p_0}^{s_{p_0}+\delta}})<\frac{\lambda}{2}$ such that
 $$\frac{1}{r_0^2}\int\limits_{t-r_0^2}^{t}\int\limits_{B(x,r_0)}(|v|^3+|\pi_{v\otimes v}|^{\frac{3}{2}}) dxdt\leq \varepsilon_{CKN}.$$
 By the $\varepsilon$- regularity theory developed in \cite{CKN}, there exists universal constants $c_{0k}>0$ such that (for $(x,t)$ and $r$ as above) we have
$$ |\nabla^{k} v(x,t)|\leq \frac{c_{0k}}{r_{0}^{k+1}}= C(k, \lambda, \|u_0\|_{\dot{B}_{p_0,p_0}^{s_{p_0}+\delta}}).$$
Thus, 
\begin{equation}\label{vhigherreg}
\sup_{(x,t)\in Q_{\lambda,T}} |\nabla^k v|\leq C(k, \lambda, \|u_0\|_{\dot{B}_{p_0,p_0}^{s_{p_0}+\delta}}).
\end{equation}
Using this and (\ref{limitpressureest}), we obtain by local regularity theory for elliptic equations that
\begin{equation}\label{preshigherreg}
\sup_{(x,t)\in Q_{\lambda,T}} |\nabla^k \pi_{v\otimes v}|\leq C(k, \lambda, \|u_0\|_{\dot{B}_{p_0,p_0}^{s_{p_0}+\delta}}).
\end{equation}
 From these estimates, the singular integral representation of $\pi_{v\otimes v}$ and that $(v,\pi_{v\otimes v})$ satisfy the Navier-Stokes system  one can prove the corresponding estimates hold for higher time derivatives of the velocity field and pressure.

 \end{proof}

\subsection{Proof of Lemma \ref{estnearinitialforLeraywithbesov}}
The  proof of Lemma \ref{estnearinitialforLeraywithbesov} is achieved by careful analysis of  certain decompositions of weak Leray-Hopf solutions, with initial data in the class given in Lemma \ref{estnearinitialforLeraywithbesov}. A key part of this involves decompositions of the initial data (Corollary \ref{Decomp}), together with properties of mild solutions, whose initial data belongs in a subcritical homogeneous Besov space (Theorem \ref{regularity}). In the context of local energy solutions of the Navier-Stokes equations with $L_{3}$ initial data, related splitting arguments have been used in \cite{jiasverak}.

Before proceeding, we state a known lemma found in \cite{prodi} and \cite{Serrin}, for example.
\begin{lemma}\label{trilinear}
Let $p\in ]3,\infty]$ and 
\begin{equation}\label{serrinpairs}
\frac{3}{p}+\frac{2}{r}=1.
\end{equation}
Suppose that $w \in L_{p,r}(Q_T)$, $v\in L_{2,\infty}(Q_T)$ and $\nabla v\in L_{2}(Q_T)$. 
Then for $t\in ]0,T[$:
\begin{equation}\label{continuitytrilinear}
\int\limits_0^t\int\limits_{\mathbb{R}^3} |\nabla v||v||w| dxdt'\leq C\int\limits_0^t \|w\|_{L_{p}}^r \|v\|^{2}_{L_2}dt'+\frac{1}{2}\int\limits_0^t\int\limits_{\mathbb{R}^3} |\nabla v|^2 dxdt'.
\end{equation}
\end{lemma}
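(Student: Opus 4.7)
The plan is to prove this classical Prodi--Serrin trilinear estimate by first bounding the spatial integral pointwise in $t$ via a H\"older inequality followed by a Gagliardo--Nirenberg interpolation in $\mathbb{R}^3$, then applying Young's inequality whose exponents close precisely because of the scaling relation $3/p+2/r=1$, and finally integrating in time.

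For $p\in ]3,\infty[$, I would set $q:=2p/(p-2)$ so that $\tfrac{1}{2}+\tfrac{1}{p}+\tfrac{1}{q}=1$, and a spatial H\"older inequality yields
\begin{equation*}
\int_{\mathbb{R}^3} |\nabla v||v||w|\,dx \;\leq\; \|\nabla v\|_{L_2}\,\|v\|_{L_q}\,\|w\|_{L_p}.
\end{equation*}
Since $2<q<6$, the Gagliardo--Nirenberg inequality in three dimensions gives $\|v\|_{L_q}\leq C\|v\|_{L_2}^{1-3/p}\|\nabla v\|_{L_2}^{3/p}$, with the exponent $3/p$ fixed by dimensional scaling through $\tfrac{1}{q}=(1-a)\tfrac{1}{2}+a\cdot \tfrac{1}{6}$. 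Combining these two bounds produces
\begin{equation*}
\int_{\mathbb{R}^3} |\nabla v||v||w|\,dx \;\leq\; C\,\|\nabla v\|_{L_2}^{\,1+3/p}\,\|v\|_{L_2}^{\,1-3/p}\,\|w\|_{L_p}.
\end{equation*}

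Using $3/p+2/r=1$ one has $1-3/p=2/r$ and $1+3/p=2-2/r$, so Young's inequality with conjugate exponents $r/(r-1)$ and $r$, applied so that the $\|\nabla v\|_{L_2}$ factor is raised to exponent $2$, gives
\begin{equation*}
\int_{\mathbb{R}^3} |\nabla v||v||w|\,dx \;\leq\; \tfrac{1}{2}\|\nabla v\|_{L_2}^{2} + C\,\|v\|_{L_2}^{2}\,\|w\|_{L_p}^{r}.
\end{equation*}
Integrating in $t'\in[0,t]$ yields the claim. The endpoint case $p=\infty$, $r=2$ is handled directly: an $L^{\infty}\cdot L^{2}\cdot L^{2}$ H\"older estimate gives $\int_{\mathbb{R}^3}|\nabla v||v||w|\,dx\leq \|w\|_{L_\infty}\|v\|_{L_2}\|\nabla v\|_{L_2}$, and then $ab\leq \tfrac{1}{2}a^{2}+\tfrac{1}{2}b^{2}$ with $a=\|\nabla v\|_{L_2}$, $b=\|w\|_{L_\infty}\|v\|_{L_2}$ finishes the argument.

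The estimates involved are all classical and no step is genuinely difficult; the only piece of bookkeeping worth flagging is that Young's inequality closes \emph{exactly} with the prefactor $\tfrac{1}{2}$ on the $\|\nabla v\|_{L_2}^{2}$ term precisely because $(p,r)$ lies on the Prodi--Serrin scaling line, which is also the reason the inequality is scaling-critical and cannot be upgraded to gain a positive power of $t$. All terms on the right-hand side are finite under the hypotheses, since $v\in L_{2,\infty}(Q_T)$ makes $\|v(\cdot,t)\|_{L_2}$ essentially bounded, $\nabla v\in L_{2}(Q_T)$ controls the absorbed term, and $w\in L_{p,r}(Q_T)$ is exactly what is needed for $\int_0^t\|w\|_{L_p}^{r}\,dt'$.
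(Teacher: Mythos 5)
Your proof is correct and is precisely the classical H\"older--Gagliardo-Nirenberg--Young argument of Prodi and Serrin; the paper gives no proof of its own and simply cites those references for exactly this argument. The exponent bookkeeping (with $q=2p/(p-2)$, interpolation exponent $3/p$, and Young exponents $r/(r-1)$ and $r$) and the separate treatment of the endpoint $p=\infty$, $r=2$ are all accurate.
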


\begin{proof}
Throughout this subsection, $u_0 \in \dot{B}_{p,p}^{s_{p,\alpha}}(\mathbb{R}^3)\cap L_2(\mathbb{R}^3)$. Here, $p$ and $\alpha$ satisfy the assumptions of Theorem \ref{weakstronguniquenessBesov}. We will write $u_0= \bar{u}^{1, \epsilon}+\bar{u}^{2, \epsilon}.$
 Here the decomposition has been performed according to Corollary \ref{Decomp} (specifically, (\ref{udecomp1})-(\ref{baru_1est.1})), with $\epsilon>0.$ Thus,
 \begin{equation}\label{baru_1u_0decomp}
 \|\bar{u}^{1, \epsilon}\|_{\dot{B}^{s_{p_0}+\delta}_{p_0,p_0}}^{p_0}\leq \epsilon^{p_0-p}\|u_0\|_{\dot{B}^{s_{p,\alpha}}_{p,p}}^{p}
 \end{equation}
 \begin{equation}\label{baru_2u_0decomp}
 \|\bar{u}^{2, \epsilon}\|_{L_2}^2\leq C(p,p_0,\|\mathcal{F}^{-1}\varphi\|_{L_1})\epsilon^{2-p}\|u_0\|_{\dot{B}^{s_{p,\alpha}}_{p,p}}^{p}
 \end{equation}
 and
 \begin{equation}\label{baru_1u_0decomp.1}
 \|\bar{u}^{1, \epsilon}\|_{L_{2}}\leq C(\|\mathcal{F}^{-1}\varphi\|_{L_1})\|u_0\|_{L_2}.
 \end{equation}
 
  Throughout this section we will let  $w^{\epsilon}$ be the mild solution from Theorem \ref{regularity} generated by the initial data $\bar{u}^{1, \epsilon}$. Recall from Theorem \ref{regularity}
that $w^{\epsilon}$ is defined on $Q_{T_{\epsilon}}$,
where
\begin{equation}\label{smallnessasmprecall}
4c(p_0)T_{\epsilon}^{\frac{\delta}{2}}\|\bar{u}^{1,\epsilon}\|_{\dot{B}^{s_{p_0}+\delta}_{p_0,p_0}}<1.
\end{equation}
In accordance with this and (\ref{baru_1u_0decomp}), we will take
\begin{equation}\label{Tepsilondef}
T_{\epsilon}:=\frac{1}{\Big(8c(p_0)\epsilon^{\frac{p_0-p}{p_0}}\|u_0\|_{\dot{B}^{s_{p,\alpha}}_{p,p}}^{\frac{p}{p_0}}\Big)^{\frac{2}{\delta}}}
\end{equation}
The two main estimates we will use are as follows. Using (\ref{Besovembedding}), (\ref{vintegest}) and (\ref{baru_1u_0decomp}), we have
\begin{equation}\label{wepsilonkatoest}
\|w^{\epsilon}\|_{X_{p_0,\delta}(T_{\epsilon})}<C\epsilon^{\frac{p_0-p}{p_0}}\|u_0\|_{\dot{B}^{s_{p,\alpha}}_{p,p}}^{\frac{p}{p_0}}.
\end{equation}
 The second property, from Theorem \ref{regularity}, is (recalling that by assumption $\bar{u}^{1,\epsilon}\in L_{2}(\mathbb{R}^3)$): 
\begin{equation}\label{wepsilonenergyspace}
w^{\epsilon}\in\,W^{1,0}_{2}(Q_{T_{\epsilon}})\cap C([0,T_{\epsilon}];J(\mathbb{R}^3))\cap L_{4}(Q_{T_{\epsilon}})
.\end{equation}
Consequently, it can be shown that $w^{\epsilon}$ satisfies the energy equality:
\begin{equation}\label{energyequalitywepsilon}
\|w^{\epsilon}(\cdot,s)\|_{L_2}^2+2\int\limits_{\tau}^{s}\int\limits_{\mathbb{R}^3} |\nabla w^{\epsilon}(y,\tau)|^2 dyd\tau=\|w^{\epsilon}(\cdot,s')\|_{L_2}^2
\end{equation}
for $0\leq s'\leq s\leq T_{\epsilon}$.
Moreover,  using (\ref{energyinequalityfluctuation}), (\ref{baru_1u_0decomp}) and (\ref{baru_1u_0decomp.1}), the following estimate is valid for $0\leq s \leq T_{\epsilon}$:
\begin{equation}\label{energyinequalitywepsilonfluctuation}
\|w^{\epsilon}(\cdot,s)-S(s)\bar{u}^{1,\epsilon}\|_{L_{2}}^2+\int\limits_0^s\int\limits_{\mathbb{R}^3} |\nabla w^{\epsilon}(\cdot,\tau)-\nabla S(\tau)\bar{u}^{1,\epsilon}|^2dyd\tau\leq $$$$\leq C s^{\frac{1}{p_0-2}+2\delta\theta_{0}}(\epsilon^{\frac{p_0-p}{p_0}}\|u_0\|_{\dot{B}^{s_{p,\alpha}}_{p,p}}^{\frac{p}{p_0}})^{4\theta_0}\times$$$$\times\left( s^{\frac{1}{2\theta_{0}-1} \left(\frac{1}{p_0-2}+2\delta\theta_0\right)}(\epsilon^{\frac{p_0-p}{p_0}}\|u_0\|_{\dot{B}^{s_{p,\alpha}}_{p,p}}^{\frac{p}{p_0}})^{\frac{4\theta_0}{2\theta_0-1}}+\|u_0\|_{L_2}^2\right)^{2(1-\theta_0)}.
\end{equation}
Here, 
\begin{equation}\label{thetadefrecall}
\frac 1 4=\frac{\theta_0}{p_0}+\frac{1-\theta_0}{2}
\end{equation}
and $C=C(p_0,\delta,\theta_0)$.
Let $$S_{\epsilon}:=\min(T,T_{\epsilon}).$$
Define on $Q_{S_{\epsilon}}$, $v^{\epsilon}(x,t):= u(x,t)- w^{\epsilon}(x,t)$. Clearly we have :
\begin{equation} \label{vepsilonspaces}
v^{\epsilon}\in L_{\infty}(0,S_{\epsilon}; L_{2})\cap C_{w}([0,S_{\epsilon}]; J(\mathbb{R}^3))\cap W^{1,0}_{2}(Q_{S_{\epsilon}})
\end{equation}
and
\begin{equation} \label{vepsiloncontinuityattzero}
\lim_{\tau\rightarrow 0}\|v^{\epsilon}(\cdot,\tau)-\bar{u}^{2,\epsilon}\|_{L_{2}}=0.
\end{equation}
Moreover, $v^{\epsilon}$ satisfies the following equations
\begin{equation}\label{directsystemvepsilon}
\partial_t v^{\epsilon}+v^{\epsilon}\cdot\nabla v^{\epsilon}+w^{\epsilon}\cdot\nabla v^{\epsilon}+v^{\epsilon}\cdot\nabla w^{\epsilon}-\Delta v^{\epsilon}=-\nabla p^{\epsilon},\qquad\mbox{div}\,v^{\epsilon}=0
\end{equation}
in $Q_{S_{\epsilon}}$,
with the initial conditions
\begin{equation}\label{directicvepsilon}
v^{\epsilon}(\cdot,0)=\bar{u}^{2,\epsilon}(\cdot)
\end{equation}
in $\mathbb{R}^3$.
From (\ref{wepsilonkatoest}), the definition of ${X_{p_0,\delta}(T_{\epsilon})}$ and (\ref{baru_1u_0decomp}), we see that for $0<s\leq T_{\epsilon}$:
\begin{equation}\label{serrinnormvepsilon}
\int\limits_{0}^{s} \|w^{\epsilon}(\cdot,\tau)\|_{L_{p_0}}^{r_0} d\tau\leq C(\delta,p_0) s^{\frac{\delta r_0}{2}} ( \epsilon^{\frac{p_0-p}{p_0}}\|u_0\|_{\dot{B}^{s_{p,\alpha}}_{p,p}}^{\frac{p}{p_0}})^{{r_0}}.
\end{equation}
Here, $r_0\in ]2,\infty[$ is such that $$\frac{3}{p_0}+\frac{2}{r_0}=1.$$

Moreover, the following energy inequality holds for $s\in [0,S_{\epsilon}]$ \footnote{ This can be justified by applying Proposition 14.3 in \cite{LR1}, for example.}:
\begin{equation}\label{venergyequality}
\|v^{\epsilon}(\cdot,s)\|_{L_2}^2+ 2\int\limits_0^s\int\limits_{\mathbb{R}^3} |\nabla v^{\epsilon}(x,\tau)|^2 dxd\tau \leq \|\bar{u}^{2,\epsilon}\|_{L_2}^2+$$$$+2\int\limits_0^s\int\limits_{\mathbb{R}^3}v^{\epsilon}\otimes w^{\varepsilon}:\nabla v^{\epsilon}  dxd\tau.
\end{equation}
We may then apply Lemma \ref{trilinear} to infer that for $s\in [0,S_{\epsilon}]$:
\begin{equation}\label{venergyinequalitynorms}
\|v^{\epsilon}(\cdot,s)\|_{L_2}^2+ \int\limits_0^s\int\limits_{\mathbb{R}^3} |\nabla v^{\epsilon}(x,\tau)|^2 dxd\tau\leq \|\bar{u}^{2, \epsilon}\|_{L_2}^2+$$$$+C\int\limits_0^s \|v^{\epsilon}(\cdot,\tau)\|_{L_2}^2 \|w^{\varepsilon}(\cdot,\tau)\|_{L_{p_0}}^{r_0} d\tau.
\end{equation}
By an application of the Gronwall lemma, we see that for $s\in [0,S_{\epsilon}]$:
\begin{equation}\label{gronwallresult}
\|v^{\epsilon}(\cdot,s)\|_{L_2}^2\leq 
  C\|\bar{u}^{2,\epsilon}\|_{L_2}^2 \exp{\Big(C\int\limits_{0}^s \|w^{\epsilon}(\cdot,\tau)\|_{L_{p_0}}^{r_0} d\tau\Big)}.
\end{equation}
We may then use (\ref{baru_2u_0decomp}) and (\ref{serrinnormvepsilon}) to infer for $s\in [0,S_{\epsilon}]$:
\begin{equation}\label{vepsilonkineticenergybound}
\|v^{\epsilon}(\cdot,s)\|_{L_2}^2\leq 
  C(p_0,p)\epsilon^{2-p}\|u_0\|_{\dot{B}^{s_{p,\alpha}}_{p,p}}^{p} \exp{\Big(C(\delta,p_0) s^{\frac{\delta r_0}{2}} \Big( \epsilon^{\frac{p_0-p}{p_0}}\|u_0\|_{\dot{B}^{s_{p,\alpha}}_{p,p}}^{\frac{p}{p_0}}\Big)^{r_0}\Big)}.
\end{equation}
Clearly, for $s\in [0,S_{\epsilon}]$ we have
$$\|u(\cdot,s)-S(s)u_0\|_{L_2}^2\leq C(\|v^{\epsilon}(\cdot,s)\|_{L_{2}}^2+\|\bar{u}^{2,\epsilon}\|_{L_2}^2 +\|w^{\epsilon}(\cdot,s)-S(s)\bar{u}^{1,\epsilon}\|_{L_{2}}^2).$$
Thus, using this together with (\ref{baru_2u_0decomp}), (\ref{energyinequalitywepsilonfluctuation}) and (\ref{vepsilonkineticenergybound}), we infer that for $s\in [0,S_{\epsilon}]$:
\begin{equation}\label{estnearinitialtimeepsilon}
\|u(\cdot,s)-S(s)u_0\|_{L_2}^2\leq$$$$\leq C(p_0,p)\epsilon^{2-p}\|u_0\|_{\dot{B}^{s_{p,\alpha}}_{p,p}}^{p} \Big(\exp{\Big(C(\delta,p_0) s^{\frac{\delta r_0}{2}} \Big( \epsilon^{\frac{p_0-p}{p_0}}\|u_0\|_{\dot{B}^{s_{p,\alpha}}_{p,p}}^{\frac{p}{p_0}}\Big)^{r_0}\Big)}+1\Big)+$$$$+\Big(C s^{\frac{1}{p_0-2}+2\delta\theta_{0}}\Big(\epsilon^{\frac{p_0-p}{p_0}}\|u_0\|_{\dot{B}^{s_{p,\alpha}}_{p,p}}^{\frac{p}{p_0}}\Big)^{4\theta_0}\times$$$$\times \left( s^{\frac{1}{2\theta_{0}-1} \left(\frac{1}{p_0-2}+2\delta\theta_0\right)}\Big(\epsilon^{\frac{p_0-p}{p_0}}\|u_0\|_{\dot{B}^{s_{p,\alpha}}_{p,p}}^{\frac{p}{p_0}}\Big)^{\frac{4\theta_0}{2\theta_0-1}}+\|u_0\|_{L_2}^2\right)^{2(1-\theta_0)}\Big).
\end{equation}
 Take $\epsilon= t^{-\gamma}$, where  $0<t\leq\min(1,T)$ and $\gamma>0$. Observing (\ref{Tepsilondef}), we see that in order to also satisfy $t\in ]0,T_{\epsilon}]$ (and hence $t\in ]0,\min(1,S_{\epsilon})]$) , we  should take
\begin{equation}\label{gammarequirement1}
0<\gamma<\frac{\delta p_0}{2(p_0-p)}
\end{equation}
and we should consider the additional restriction 
\begin{equation}\label{trequirement}
0<t<\min\Big(1,T,\Big( \frac{1}{8c} \|u_0\|_{\dot{B}^{s_{p,\alpha}}_{p,p}}^{-\frac{p}{p_0}}\Big)^{\frac{2p_0}{\delta p_0-2\gamma(p_0-p)}}\Big).
\end{equation}
Assuming these restrictions, we see that according to (\ref{estnearinitialtimeepsilon}), we then have: 
\begin{equation}\label{estnearinitialtimeepsilon.1}
\|u(\cdot,t)-S(t)u_0\|_{L_2}^2\leq$$$$\leq C(p_0,p)t^{\gamma(p-2)}\|u_0\|_{\dot{B}^{s_{p,\alpha}}_{p,p}}^{p} \Big(\exp{\Big(C(\delta,p_0) t^{\frac{\delta r_0}{2}} \Big( t^{\frac{-\gamma(p_0-p)}{p_0}}\|u_0\|_{\dot{B}^{s_{p,\alpha}}_{p,p}}^{\frac{p}{p_0}}\Big)^{r_0}\Big)}+1\Big)+$$$$+\Big(C t^{\frac{1}{p_0-2}+2\delta\theta_{0}}\Big(t^{\frac{-\gamma(p_0-p)}{p_0}}\|u_0\|_{\dot{B}^{s_{p,\alpha}}_{p,p}}^{\frac{p}{p_0}}\Big)^{4\theta_0}\times$$$$\times\left( t^{\frac{1}{2\theta_{0}-1} \left(\frac{1}{p_0-2}+2\delta\theta_0\right)}\Big(t^{\frac{-\gamma(p_0-p)}{p_0}}\|u_0\|_{\dot{B}^{s_{p,\alpha}}_{p,p}}^{\frac{p}{p_0}}\Big)^{\frac{4\theta_0}{2\theta_0-1}}+\|u_0\|_{L_2}^2\right)^{2(1-\theta_0)}\Big).
\end{equation} 
Let us further choose $\gamma$ such that the following inequalities hold:
\begin{equation}\label{gammarequirement2}
\frac{\delta r_0}{2}-\frac{\gamma r_0(p_0-p)}{p_0}>0,
\end{equation}
\begin{equation}\label{gammarequirement3}
\frac{1}{p_0-2}+2\delta\theta_0-\frac{4\gamma\theta_0(p_0-p)}{p_0}>0
\end{equation}
and
\begin{equation}\label{gammarequirement4}
\frac{1}{2\theta_0-1}\Big(\frac{1}{p_0-2}+2\delta\theta_0\Big)-\frac{4\theta_0\gamma(p_0-p)}{p_0(2\theta_0-1)}>0.
\end{equation}
With these choices, together with (\ref{gammarequirement1})-(\ref{trequirement}), we recover the conclusions of Lemma \ref{estnearinitialforLeraywithbesov}.
\end{proof}

\setcounter{equation}{0}
\section{ Short Time Uniqueness of Weak Leray-Hopf Solutions for Initial Values in $VMO^{-1}$}
\subsection{Construction of Strong Solutions}
The approach we will take to prove Theorem \ref{weakstronguniquenessBesov} is as follows. Namely, we construct a weak Leray-Hopf solution, with initial data $u_0 \in J(\mathbb{R}^3)\cap \dot{B}^{s_{p,\alpha}}_{p,p}(\mathbb{R}^3)\cap VMO^{-1}(\mathbb{R}^3)$, by perturbation methods. We refer to this constructed solution as the 'strong solution'. Then, Lemma \ref{estnearinitialforLeraywithbesov} plays a crucial role in showing that the strong solution has good enough properties to coincide with all weak Leray-Hopf solutions, with the same initial data, on some  time interval $]0,T(u_0)[$. With this in mind, we now state the relevant Theorem related to the construction of this 'strong solution'. 
Let us introduce the necessary preliminaries.
The path space $\mathcal{X}_{T}$ for the mild solutions constructed in \cite{kochtataru} is defined to be 
\begin{equation}\label{pathspaceBMO-1}
\mathcal{X}_{T}:=\{ u\in \mathcal{S}^{'}(\mathbb{R}^3\times \mathbb{R}_{+}): \|e^{t\Delta}v\|_{\mathcal{E}_{T}}<\infty\}.
\end{equation}
Here,
\begin{equation}\label{pathspacenormdef}
\|u\|_{\mathcal{E}_{T}}:= \sup_{0<t<T} \sqrt{t}\|u(\cdot,t)\|_{L_{\infty}(\mathbb{R}^3)}+$$$$+\sup_{(x,t)\in \mathbb{R}^3\times ]0,T[}\Big(\frac{1}{|B(0,t)|}\int\limits_0^t\int\limits_{|y-x|<\sqrt{t}} |u|^2 dyds\Big)^{\frac{1}{2}}.
\end{equation}
From (\ref{bmo-1norm}), we see that for $0<T\leq\infty$
 \begin{equation}\label{BMO-1embeddingcritical}
u_0 \in BMO^{-1}(\mathbb{R}^3)\Rightarrow \|S(t)u_{0}\|_{\mathcal{E}_{T}}\leq C\|u_0\|_{BMO^{-1}}.
\end{equation}
Since $C_{0}^{\infty}(\mathbb{R}^3)$ is dense in $VMO^{-1}(\mathbb{R}^3)$, we can see from the above that for $u_0\in VMO^{-1}(\mathbb{R}^3)$
\begin{equation}\label{VMO-1shrinking}
\lim_{T\rightarrow 0^{+}} \|S(t)u_{0}\|_{\mathcal{E}_{T}}=0.
\end{equation}
Recalling the definition of $G(f\otimes g)$ given by (\ref{fluctuationdef}), it was shown in \cite{kochtataru} that there exists a universal constant $C$ such that for all $f,\,g\in \mathcal{E}_{T}$ 
\begin{equation}\label{bilinbmo-1}
\|G(f\otimes g)\|_{\mathcal{E}_{T}}\leq C\|f\|_{\mathcal{E}_{T}}\|g\|_{\mathcal{E}_{T}}.
\end{equation}
Here is the Theorem related to the construction of the 'strong solution'. The main features of this construction required for our purposes, can already be inferred ideas contained in \cite{LRprioux}.
Since the proof is not explicitly contained in \cite{LRprioux}, we find it beneficial to sketch certain parts of the proof in the Appendix.
\begin{theorem}\label{regularitycriticalbmo-1} 
Suppose that $u_0\in VMO^{-1}(\mathbb{R}^3)\cap J(\mathbb{R}^3).$ 
There exists a  universal constant $\epsilon_0>0$ such that 
if 
\begin{equation}\label{smallnessasmpbmo-1}
\|S(t)u_{0}\|_{\mathcal{E}_{T}}<\epsilon_0,
\end{equation}
then 
there exists a $v\in \mathcal{E}_{T}$, which solves the Navier Stokes equations in the sense of distributions and satisfies the following properties.
The first property is that $v$ solves the following integral equation:
\begin{equation}\label{vintegeqnbmo-1}
v(x,t):= S(t)u_{0}+G(v\otimes v)(x,t)
\end{equation}
in $Q_{T}$, along with the estimate
\begin{equation}\label{vintegestbmo-1}
\|v\|_{\mathcal{E}_{T}}<2\|S(t)u_0\|_{\mathcal{E}(T)}.
\end{equation}
 The second property
  is that $v$ is a weak Leray-Hopf solution on $Q_{T}$. 

 If $\pi_{v\otimes v}$ is the associated pressure we have (here, $\lambda\in ]0,T[$ and $p\in ]1,\infty[$):
\begin{equation}\label{presspacebmo-1}
\pi_{v\otimes v} \in L_{\frac{5}{3}}(Q_{T})\cap L_{\frac{p}{2},\infty}(Q_{\lambda,T})).
\end{equation}
The final property is that for $\lambda\in]0,T[$ and $k=0,1\ldots$, $l=0,1\ldots$:
\begin{equation}\label{vpsmoothbmo-1}
\sup_{(x,t)\in Q_{\lambda,T}}|\partial_{t}^{l}\nabla^{k} v|+|\partial_{t}^{l}\nabla^{k} \pi_{v\otimes v}|\leq c(p_0,\lambda,\|u_0\|_{BMO^{-1}},\|u_0\|_{L_{2}},k,l ).
\end{equation}
\end{theorem}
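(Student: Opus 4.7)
My plan is to combine the Koch--Tataru fixed point construction in $\mathcal{E}_T$ with an auxiliary energy estimate for the fluctuation $w := v - S(t)u_0 = G(v\otimes v)$, adapting the strategy of \cite{LRprioux}. First I would obtain $v \in \mathcal{E}_T$ by Picard iteration: setting $v^{(0)} := S(t)u_0$ and $v^{(n+1)} := v^{(0)} + G(v^{(n)}\otimes v^{(n)})$, the bilinear bound (\ref{bilinbmo-1}) together with (\ref{smallnessasmpbmo-1}) yields, provided $\epsilon_0$ is smaller than a universal constant, the uniform estimate $\|v^{(n)}\|_{\mathcal{E}_T} < 2\|S(t)u_0\|_{\mathcal{E}_T}$ and contraction of the iterates; the limit $v$ satisfies (\ref{vintegeqnbmo-1})--(\ref{vintegestbmo-1}) and solves the Navier--Stokes system in the distributional sense.

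The main obstacle is upgrading $v$ to an element of $\mathcal{L}(T)$ satisfying the energy inequality (\ref{venergyineq}). Since $u_0 \in L_2(\mathbb{R}^3)$, the linear part $S(t)u_0$ already lies in $C([0,T]; J(\mathbb{R}^3))\cap L_2(0,T; \stackrel{\circ} J{^1_2}(\mathbb{R}^3))$ with energy bounded by $\|u_0\|_{L_2}$. For the fluctuation $w$ I would run a parallel iteration $w^{(n)} := G(v^{(n)}\otimes v^{(n)})$, so that $w^{(n)}$ solves a linear Stokes problem with forcing $-\mathrm{div}(v^{(n)}\otimes v^{(n)})$ and zero initial data. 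Testing this equation against $w^{(n)}$ and expanding $v^{(n)} = S(t)u_0 + w^{(n-1)}$ produces several bilinear terms; the delicate ones are estimated via Lemma \ref{trilinear} combined with an interpolated bound $\|v^{(n)}(\cdot,t)\|_{L_p} \lesssim t^{-\frac{1}{2}(1-\frac{3}{p})}\|v^{(n)}\|_{\mathcal{E}_T}$, which follows from $\sqrt{t}\|v^{(n)}\|_{L_\infty} \leq 2\epsilon_0$ together with the Carleson $L_2$ component of the $\mathcal{E}_T$ norm. Smallness of $\epsilon_0$ lets one absorb the worst term into $\frac{1}{2}\|\nabla w^{(n)}\|_{L_2}^2$, and Gronwall then yields a uniform bound on $w^{(n)}$ in $L_\infty(0,T; J(\mathbb{R}^3)) \cap L_2(0,T; \stackrel{\circ} J{^1_2}(\mathbb{R}^3))$. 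Passing to the limit gives $v \in \mathcal{L}(T)$, and strong $L_2$ convergence of $v(\cdot,t)$ to $u_0$ as $t \to 0^+$ follows from $\|v(\cdot,t) - u_0\|_{L_2} \leq \|S(t)u_0 - u_0\|_{L_2} + \|w(\cdot,t)\|_{L_2}$, both tending to zero. The energy inequality itself is then justified by proving the energy identity on $[t_0, t]$ for $t_0 > 0$ (which is licit thanks to the smoothness for positive times established in the last step) and letting $t_0 \to 0^+$.

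Finally, the pressure is given by $\pi_{v\otimes v} = \mathcal{R}_i\mathcal{R}_j(v_i v_j)$. Once $v \in \mathcal{L}(T)$, Gagliardo--Nirenberg interpolation gives $v \in L_{10/3}(Q_T)$, so $v\otimes v \in L_{5/3}(Q_T)$ and Calderon--Zygmund yields $\pi_{v\otimes v} \in L_{5/3}(Q_T)$. For $t \geq \lambda > 0$, the bound $\sqrt{t}\|v(\cdot,t)\|_{L_\infty} \leq 2\epsilon_0$ places $v$ in $L_\infty(Q_{\lambda,T})$ with norm $\lesssim \lambda^{-1/2}$; interpolation with the $L_2$ energy bound followed by the boundedness of Riesz transforms on the appropriate Lorentz spaces then gives $\pi_{v\otimes v} \in L_{p/2,\infty}(Q_{\lambda,T})$ for every $p$ in the stated range. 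The smoothness estimate (\ref{vpsmoothbmo-1}) follows exactly as in the last step of the proof of Theorem \ref{regularity}: H\"older's inequality on a parabolic cylinder $Q(z_0,r)$ with the two bounds above controls $r^{-2}\iint_{Q(z_0,r)}(|v|^3 + |\pi_{v\otimes v}|^{3/2})$ by a positive power of $r$, which can be made smaller than the Caffarelli--Kohn--Nirenberg threshold $\varepsilon_{CKN}$ by choosing $r$ sufficiently small in terms of $\lambda$, $\|u_0\|_{BMO^{-1}}$ and $\|u_0\|_{L_2}$; the $\varepsilon$-regularity theory of \cite{CKN} then provides the spatial derivative bounds, and the time derivatives are recovered from the equation itself together with elliptic regularity for the pressure.
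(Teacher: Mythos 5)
Your fixed-point step in $\mathcal{E}_T$ and your final smoothness/pressure arguments are fine and agree with the paper. The genuine gap is in the middle step, where you upgrade $v$ to the energy class. Your argument hinges on the interpolation bound $\|v^{(n)}(\cdot,t)\|_{L_p}\lesssim t^{-\frac{1}{2}(1-\frac{3}{p})}\|v^{(n)}\|_{\mathcal{E}_T}$ for finite $p$, claimed to follow from the $L_\infty$ and Carleson components of the $\mathcal{E}_T$ norm. This is false: the $\mathcal{E}_T$ norm controls only \emph{local} space-time averages of $|v|^2$, uniformly in $x$, and gives no global spatial integrability whatsoever (e.g.\ a bounded, non-decaying divergence-free field has finite $\mathcal{E}_T$ norm and infinite $L_p$ norm for every $p<\infty$). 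The interpolation you can legitimately run inside your induction is between $\sqrt{t}\|v^{(n)}\|_{L_\infty}\leq 2\epsilon_0$ and the inductive $L_\infty(0,T;L_2)$ bound, which yields the weaker rate $\|v^{(n)}(\cdot,t)\|_{L_p}\lesssim t^{-\frac{1}{2}(1-\frac{2}{p})}$. Plugging that into Lemma \ref{trilinear} gives $\int_0^t\|v^{(n)}\|_{L_p}^r\,d\tau$ with integrand $\tau^{-\frac{p-2}{p-3}}$, which diverges; even your (false) claimed rate would produce exactly $\tau^{-1}$, i.e.\ a logarithmic divergence. The same borderline failure appears in the source term: $v^{(n)}\otimes v^{(n)}\in L_2(Q_T)$ requires $v^{(n)}\in L_4(Q_T)$, and $L_\infty L_2\cap\mathcal{E}_T$ only gives $\|v^{(n)}(\cdot,\tau)\|_{L_4}^4\lesssim\tau^{-1}$. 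So the Gronwall/absorption scheme does not close as written. (This is not an accident: for data that is merely in $VMO^{-1}\cap J$ one has no rate of decay of $\|G(v\otimes v)(\cdot,\tau)\|_{L_2}$ as $\tau\to0$; such a rate is exactly what Lemma \ref{estnearinitialforLeraywithbesov} supplies under the \emph{additional} Besov hypothesis, and it is not available here.)

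The paper sidesteps this difficulty entirely by mollifying the convection term: it solves $v^{\epsilon}=S(t)u_0+G(v^{\epsilon}\otimes(\omega_{\epsilon}\star v^{\epsilon}))$ by the same Picard scheme, with a uniform $L_\infty(0,T;L_2)$ bound coming from the elementary estimate $\|K(\cdot,t-\tau)\|_{L_1}\|f\otimes(\omega_\epsilon\star g)\|_{L_2}\lesssim(t-\tau)^{-1/2}\tau^{-1/2}\|f\|_{\mathcal{E}_T}\|g\|_{L_\infty L_2}$. For the mollified system the cubic term $\int(\omega_{\epsilon}\star v^{\epsilon})\cdot\nabla v^{\epsilon}\cdot v^{\epsilon}\,dx$ vanishes identically, so the energy \emph{equality} holds with no trilinear estimate at all; one then passes to the limit $\epsilon\to0$ using Theorem 3.5 of \cite{LRprioux} (convergence in $\mathcal{E}_T$ to the Koch--Tataru solution) together with Leray's compactness argument, under which the energy inequality survives by lower semicontinuity. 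If you want to avoid mollification, the viable direct route is not the one you describe: rather, establish $\|G(f\otimes g)\|_{L_\infty(0,T;L_2)}\lesssim\|f\|_{\mathcal{E}_T}\|g\|_{L_\infty(0,T;L_2)}$ for the iterates (this does hold without mollification), deduce $\|v(\cdot,t)-u_0\|_{L_2}\to0$ from the $VMO^{-1}$ shrinking property $\|S(\cdot)u_0\|_{\mathcal{E}_t}\to0$, prove the energy identity on $[\delta,t]$ using interior smoothness, and let $\delta\to0$. Either way, the step as you have written it does not go through.
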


\subsection{Proof of Theorem \ref{weakstronguniquenessBesov}}
\begin{proof}
Let us now consider any other weak Leray-Hopf solution $u$, defined on $Q_{\infty}$ and with initial data $u_0 \in J(\mathbb{R}^3)\cap \dot{B}^{s_{p,\alpha}}_{p,p}(\mathbb{R}^3)\cap VMO^{-1}(\mathbb{R}^3).$
Let $\widehat{T}(u_0)$ be such that
$$\|S(t)u_0\|_{\mathcal{E}_{\widehat{T}(u_0)}}< \epsilon_0,$$
where $\epsilon_0$ is from (\ref{smallnessasmpbmo-1}) of Theorem \ref{regularitycriticalbmo-1}. Consider $0<T<\widehat{T}(u_0)$, where $T$ is to be determined.
Let $v:Q_{T}\rightarrow\mathbb{R}^3$ be as in Theorem \ref{regularitycriticalbmo-1}. From (\ref{vintegestbmo-1}), we have
\begin{equation}\label{vbmo-1estuptoT}
\|v\|_{\mathcal{E}_{T}}<2\|S(t)u_0\|_{\mathcal{E}_{T}}\leq 2\|S(t)u_0\|_{\mathcal{E}_{\widehat{T}(u_0)}}<2\epsilon_0.
\end{equation}
We define 
\begin{equation}\label{wdefbmo-1}
w=u-v \in W^{1,0}_{2}(Q_{T})\cap C_{w}([0,T]; J(\mathbb{R}^3)).
\end{equation}
Moreover, $w$ satisfies the following equations
\begin{equation}\label{directsystemwbmo-1}
\partial_t w+w\cdot\nabla w+v\cdot\nabla w+w\cdot\nabla v-\Delta w=-\nabla q,\qquad\mbox{div}\,w=0
\end{equation}
in $Q_{T}$,
with the initial condition satisfied in the  strong $L_{2}$ sense:
\begin{equation}\label{initialconditionwbmo-1}
\lim_{t\rightarrow 0^{+}}\|w(\cdot,0)\|_{L_{2}}=0
\end{equation}
in $\mathbb{R}^3$.
From the definition of $\mathcal{E}_{T}$,
 we have that $v\in L_{\infty}(Q_{\delta,T})$ for $0<\delta<s\leq T$.


Using Proposition 14.3 in \cite{LR1}, one can deduce that for $t\in [\delta,T]$: 
\begin{equation}\label{wenergyequalitybmo-1}
\|w(\cdot,t)\|_{L_2}^2\leq \|u_{0}\|_{L_2}^2-2\int\limits_0^{t}\int\limits_{\mathbb{R}^3} |\nabla u|^2 dyd\tau+\|u_{0}\|_{L_2}^2-2\int\limits_0^{t}\int\limits_{\mathbb{R}^3} |\nabla v|^2 dyd\tau-$$$$-2\int\limits_{\mathbb{R}^3} u(y,\delta)\cdot v(y,\delta)dy+4\int_{\delta}^t\int\limits_{\mathbb{R}^3} \nabla u:\nabla v dyd\tau+$$$$+2\int\limits_{\delta}^t\int\limits_{\mathbb{R}^3} v\otimes w:\nabla w dyd\tau.
\end{equation}
Using Lemma \ref{trilinear} and (\ref{vintegestbmo-1}) , we see that
\begin{equation}\label{trilineatestwbmo-1}
\int\limits_{\delta}^t\int\limits_{\mathbb{R}^3} |v||w||\nabla w| dyd\tau\leq C\int\limits_{\delta}^t \|v(\cdot,\tau)\|_{L_{\infty}}^{2}\|w(\cdot,\tau)\|_{L_{2}}^2 d\tau+$$$$+\frac{1}{2}\int\limits_{\delta}^t\int\limits_{\mathbb{R}^3}|\nabla w|^2 dyd\tau \leq $$$$\leq C\|v\|_{\mathcal{E}_{T}}^{2}\int\limits_{\delta}^t \frac{\|w(\cdot,\tau)\|_{L_{2}}^{2}}{\tau} d\tau+ \frac{1}{2}\int\limits_{\delta}^t\int\limits_{\mathbb{R}^3}|\nabla w|^2 dyd\tau.
\end{equation}
The main point now is that Lemma \ref{estnearinitialforLeraywithbesov} implies that there exists $$\beta(p,\alpha)>0$$ and $$\gamma(\|u_{0}\|_{\dot{B}^{s_{p,\alpha}}_{p,p}(\mathbb{R}^3)}, p,\alpha)>0$$ such that for $0<t<\min (1,T, \gamma)$ :
\begin{equation}\label{estnearinitialtimeleraydifferenceslesbesgue}
\|w(\cdot,t)\|_{L_{2}}^2\leq t^{\beta} c(p,\alpha, \|u_0\|_{L_{2}(\mathbb{R}^3)}, \|u_0\|_{\dot{B}^{s_{p,\alpha}}_{p,p}(\mathbb{R}^3)}).
\end{equation}
Hence,
\begin{equation}\label{wweightedintimelesbesgue}
\sup_{0<t<T} \frac{\|w(\cdot,t)\|_{L_2}^2}{t^{\beta}}<\infty.
\end{equation}
This allows us to take $\delta\rightarrow 0$ in (\ref{wenergyequalitybmo-1}) to get:
\begin{equation}\label{wenergyequalityuptoinitialtimebmo-1lesbesgue}
\|w(\cdot,t)\|_{L_2}^2+2\int\limits_{0}^{t}\int\limits_{\mathbb{R}^3}|\nabla w|^2 dyd\tau \leq 2\int\limits_{0}^t\int\limits_{\mathbb{R}^3} v\otimes w:\nabla w dyd\tau.
\end{equation}
Using (\ref{trilineatestwbmo-1}) and (\ref{wweightedintimelesbesgue}) we see that for $t\in [0,T]$: 
\begin{equation}\label{wmainest1bmo-1}
\|w(\cdot,t)\|_{L_{2}}^2\leq C\|v\|_{\mathcal{E}_{T}}^{2}\int\limits_{0}^t \frac{\|w(\cdot,\tau)\|_{L_{2}}^{2}}{\tau} d\tau\leq $$$$\leq{\frac{C}{\beta}t^{\beta}\|v\|_{\mathcal{E}_{T}}^{2}}\sup_{0<\tau<T}\Big(\frac{\|w(\cdot,\tau)\|_{L_{2}}^2}{\tau^{\beta}}\Big).
\end{equation}
Using this and (\ref{vbmo-1estuptoT}), we have
\begin{equation}\label{wmainest2bmo-1}
\sup_{0<\tau<T}\Big(\frac{\|w(\cdot,\tau)\|_{L_{2}}^2}{\tau^{\beta}}\Big)\leq{\frac{C'}{\beta}\|S(t)u_0\|_{\mathcal{E}_{T}}^{2}}\sup_{0<\tau<T}\Big(\frac{\|w(\cdot,\tau)\|_{L_{2}}^2}{\tau^{\beta}}\Big).
\end{equation}
Using (\ref{VMO-1shrinking}), we see that we can choose $0<T=T(u_0)<\widehat{T}(u_0)$ such that $$\|S(t)u_0\|_{\mathcal{E}_{T}}\leq \min\Big(\frac{\beta}{2C'}\Big)^{\frac 1 2}.$$
With this choice of $T(u_0)$, it immediately follows that $w=0$ on $Q_{T(u_0)}$. 
\end{proof} 
\subsection{Proof of Corollary \ref{cannoneweakstronguniqueness}}
\begin{proof}
Since $3<p<\infty$, it is clear that there exists an $\alpha:=\alpha(p)$ such that $2<\alpha<3$ and
\begin{equation}\label{pcondition}
\alpha<p<\frac{\alpha}{3-\alpha}.
\end{equation}
With this $p$ and $\alpha$, we may apply Proposition \ref{interpolativeinequalitybahourichemindanchin} with
$s_{1}= -\frac{3}{2}+\frac{3}{p},$
$s_{2}= -1+\frac{3}{p}$
and $\theta=6\Big(\frac{1}{\alpha}-\frac{1}{3}\Big).$ In particular this gives for any $u_0\in \mathcal{S}^{'}_{h}$:
$$ 
\|u_0\|_{\dot{B}^{s_{p,\alpha}}_{p,1}}\leq c(p,\alpha)\|u_0\|_{\dot{B}^{-\frac{3}{2}+\frac{3}{p}}_{p,\infty}}^{6(\frac{1}{\alpha}-\frac{1}{3})}\|u_0\|_{\dot{B}^{-1+\frac{3}{p}}_{p,\infty}}^{6(\frac{1}{2}-\frac{1}{\alpha})}.
$$ 
From Remark \ref{besovremark2}, we see that $\dot{B}^{s_{p,\alpha}}_{p,1}(\mathbb{R}^3)\hookrightarrow \dot{B}^{s_{p,\alpha}}_{p,p}(\mathbb{R}^3) $ and
$L_{2}(\mathbb{R}^3) \hookrightarrow\dot{B}^{0}_{2,\infty}(\mathbb{R}^3)\hookrightarrow\dot{B}^{-\frac{3}{2}+\frac{3}{p}}_{p,\infty}(\mathbb{R}^3).$
Thus, we have the inclusion
$$
\dot{B}^{s_{p,\alpha}}_{p,p}(\mathbb{R}^3)\subset \dot{B}^{s_{p}}_{p,\infty}(\mathbb{R}^3)\cap L_{2}(\mathbb{R}^3).
$$
From this, along with the inclusion $\mathbb{\dot{B}}^{s_p}_{p,\infty}(\mathbb{R}^3)\hookrightarrow VMO^{-1}(\mathbb{R}^3)$, we infer
\begin{equation}\label{corollarybesovembeddings}
\mathbb{\dot{B}}^{s_p}_{p,\infty}(\mathbb{R}^3)\cap J(\mathbb{R}^3)\subset VMO^{-1}(\mathbb{R}^3)\cap \dot{B}^{s_{p,\alpha}}_{p,p}(\mathbb{R}^3)\cap J(\mathbb{R}^3).
\end{equation}
From (\ref{pcondition}) and (\ref{corollarybesovembeddings}), we infer that conclusions of Corollary \ref{cannoneweakstronguniqueness} is an immediate consequence of Theorem \ref{weakstronguniquenessBesov}.
\end{proof}
\begin{remark}\label{remarkcannoneweakstrong}
From the above proof of Corollary \ref{cannoneweakstronguniqueness}, we see that
\begin{equation}\label{besovinclusionforestnearinitialtime}
\dot{B}^{s_{p}}_{p,\infty}(\mathbb{R}^3)\cap J(\mathbb{R}^3)\subset\dot{B}^{s_{p}}_{p,\infty}(\mathbb{R}^3)\cap J(\mathbb{R}^3). 
\end{equation}
Hence, the conclusions of Lemma \ref{estnearinitialforLeraywithbesov} apply if $u_{0}\in \dot{B}^{s_{p}}_{p,\infty}(\mathbb{R}^3)\cap J(\mathbb{R}^3).$
Using this we claim the assumptions of Corollary \ref{cannoneweakstronguniqueness} can be weakened. Namely, there exists a small $\tilde{\epsilon}_0=\tilde{\epsilon}_0(p)$ such that  for all $u_{0} \in \dot{B}^{s_{p}}_{p,\infty}(\mathbb{R}^3)\cap J(\mathbb{R}^3)$ with
\begin{equation}\label{smallkatonorm}
\sup_{0<t<T} t^{\frac{-s_{p}}{2}}\|S(t)u_0\|_{L_{p}}<\tilde{\epsilon}_0, 
\end{equation}
we have the following implication. Specifically, all weak Leray-Hopf solutions on $Q_{\infty}$, with initial data $u_0$, coincide on $Q_{T}$. 

Indeed taking any fixed $u_0$ in this class and taking $\tilde{\epsilon_0}$ sufficiently small, we may argue in a verbatim fashion as in the proof of Theorem \ref{regularity} (setting $\delta=0$).  Consequently, there exists a weak Leray-Hopf solution $v$ on $Q_{T}$ such that
\begin{equation}\label{Katosmall}
\sup_{0<t<T}t^{\frac{-s_{p}}{2}}\|v(\cdot,t)\|_{L_{p}}<2\tilde{\epsilon}_0.  
\end{equation}

Using this and Lemma \ref{estnearinitialforLeraywithbesov}, we may argue in a similar way to the proof of Theorem \ref{weakstronguniquenessBesov} to obtain the desired conclusion.
\end{remark} 
\setcounter{equation}{0}
\section{Uniqueness Criterion for Weak Leray Hopf Solutions}
 Now let us state two known facts, which will be used in the proof of Proposition \ref{extendladyzhenskayaserrinprodi}.
 If $v$ is a weak Leray-Hopf solution on $Q_{\infty}$ with initial data $u_{0} \in J(\mathbb{R}^3)$, then this implies that $v$ satisfies the integral equation in $Q_{\infty}$:
  \begin{equation}\label{vintegeqnweakLerayhopf}
  v(x,t)= S(t)u_0+G(v\otimes v)(x,t). 
  \end{equation}
 The second fact is as follows. Consider $3<p<\infty$ and $2<q<\infty$ such that
 $
  {3}/{p}+{2}/{q}=1.
  $
  Then there exists a constant $C=C(p,q)$ such that for all $f, g\in L^{q,\infty}(0,T; L^{p,\infty}(\mathbb{R}^3))$
  \begin{equation}\label{bicontinuityLorentz}
  \|G(f\otimes g)\|_{L^{q,\infty}(0,T; L^{p,\infty}(\mathbb{R}^3))}\leq C\|f\|_{L^{q,\infty}(0,T; L^{p,\infty}(\mathbb{R}^3))}\|g\|_{L^{q,\infty}(0,T; L^{p,\infty}(\mathbb{R}^3))}.
  \end{equation}
 These statements and their corresponding proofs, can be found in \cite{LR1}\footnote{Specifically, Theorem 11.2 of \cite{LR1}.} and \cite{LRprioux}\footnote{Specifically, Lemma 6.1 of \cite{LRprioux}.}, for example.
\subsection{Proof of Proposition \ref{extendladyzhenskayaserrinprodi}}
\begin{proof}
\begin{itemize}
\item[] \textbf{Case 1: $u$ satisfies (\ref{extendladyzhenskayaserrinprodi2})-(\ref{smallness})  }
\end{itemize}
In this case, we see that the facts mentioned in the previous paragraph imply
\begin{equation}\label{lorentzinitialdatasmall}
\|S(t)u_0\|_{L^{q,\infty}(0,T; L^{p,\infty})}\leq \epsilon_{*}+C\epsilon_{*}^{2}.
\end{equation}
For $0<t_{0}<t_{1}$, we may apply O'Neil's convolution inequality (Proposition \ref{O'Neil}), to infer that
\begin{equation}\label{almostdecreasingverified}
\|S(t_1)u_0\|_{L^{p,\infty}}\leq 3p \|S(t_{0})u_0\|_{L^{p,\infty}}.
\end{equation}
This, in conjunction with (\ref{lorentzinitialdatasmall}), allows us to apply Lemma \ref{pointwiselorentzdecreasing} to obtain  that for $0<t<T$
\begin{equation}\label{lorentzpointwiseexplicit}
\|S(t)u_0\|_{L^{p,\infty}}\leq \frac{ 6p(\epsilon_{*}+C\epsilon_{*}^{2})}{t^{\frac{1}{q}}}.
\end{equation}
A further application of O'Neil's convolution inequality gives
\begin{equation}\label{lorentzbesovembedding}
\|S(t)u_0\|_{L_{2p}}\leq \frac{C'(p)\|S(t/2)u_0\|_{L^{p,\infty}}}{t^{\frac{3}{4p}}}
\end{equation}
This and (\ref{lorentzpointwiseexplicit}) implies that for $0<t<T$ we have
\begin{equation}\label{katoclassestimate}
\sup_{0<t<T} t^{\frac{-s_{2p}}{2}} \|S(t)u_0\|_{L_{2p}}\leq C''(p,q)(\epsilon_{*}+\epsilon_{*}^2).
\end{equation}
Recalling that $u_0 \in J(\mathbb{R}^3)$ ($\subset \mathcal{S}_{h}^{'}$), we see that by Young's inequality
$$ t^{\frac{-s_{2p}}{2}}\|S(t)u_0\|_{L_{2p}}\leq \frac{C'''(p)\|u_{0}\|_{L_{2}}}{t^{\frac{1}{4}}}.$$
From this and (\ref{katoclassestimate}), we deduce that
\begin{equation}\label{initialdataspaces}
u_{0} \in J(\mathbb{R}^3)\cap \dot{B}^{s_{2p}}_{2p,\infty}(\mathbb{R}^3).
\end{equation}
Using (\ref{katoclassestimate})-(\ref{initialdataspaces}) and Remark \ref{remarkcannoneweakstrong}, once reaches the desired conclusion provided $\epsilon_{*}$ is sufficiently small.
\begin{itemize}
\item[] \textbf{Case 2: $u$ satisfies (\ref{extendladyzhenskayaserrinprodi1})-(\ref{integrabilitycondition1})  }
\end{itemize}
The assumptions (\ref{extendladyzhenskayaserrinprodi1})-(\ref{integrabilitycondition1}) imply that $u\in L^{q,\infty}(0,T; L^{p,\infty}(\mathbb{R}^3))$ with  
\begin{equation}\label{shrinkingLorentz}
\lim_{S\rightarrow 0} \|u\|_{L^{q,\infty}(0,S; L^{p,\infty})}=0.
\end{equation}
From \cite{Sohr}, we have that  
\begin{equation}\label{sohrreg}
u \in L_{\infty}(\mathbb{R}^3 \times ]T',T[)\,\,\rm{for\,\,any}\,\, 0<T'<T.
\end{equation}
Now (\ref{shrinkingLorentz}), allows us to reduce to case 1 on some time interval. This observation, combined with (\ref{sohrreg}), enables us to deduce that $u$ is unique on $Q_{T}$ amongst all other weak Leray-Hopf solutions with the same initial value. 
 
\end{proof}

\setcounter{equation}{0}
\section{Appendix}
\subsection{Appendix I: Sketch of Proof of Theorem \ref{regularitycriticalbmo-1}}
\begin{proof}
\begin{itemize}
\item[]\textbf{ Step 1: the mollified  integral equation}
\end{itemize}
Let $\omega \in C_{0}^{\infty}(B(1))$ be a standard mollifier. Moreover, denote $$\omega_{\epsilon}(x):= \frac{1}{\epsilon^3} \omega\Big(\frac{x}{\epsilon}\Big).$$ 
Recall Young's inequality:
\begin{equation}\label{Youngsrecall}
\|f\star g\|_{L^{r}}\leq C_{p,q}\|f\|_{L^{p}}\|g\|_{L^{q}}
\end{equation}
where $1<p,q,r<\infty$, $0<s_1,s_2\leq\infty$, $\frac 1 p+\frac 1 q=\frac 1 r +1$.

Applying this and the pointwise estimate (\ref{kenrelKest})  gives the following:
\begin{equation}\label{Ktensormainest1recall}
\|K(\cdot, t-\tau)\star (f\otimes (\omega_{\epsilon}\star g))(\cdot,\tau)\|_{L_{2}}\leq  \|K(\cdot,t-\tau)\|_{L_1}\|f\otimes (\omega_{\epsilon}\star g)\|_{L_{{2}}}\leq$$$$\leq C (t-\tau)^{-\frac{1}{2}}\|f\otimes (\omega_{\epsilon}\star g)\|_{L_{2}}\leq
C (t-\tau)^{-\frac{1}{2}}\frac{\|f\|_{\mathcal{E}_T}\|g\|_{L_{\infty}(0,T; L_{2})}}{\tau^{\frac 1 2}}.
\end{equation}
 One can then show that 
\begin{equation}\label{Gtensorest1bmo-1}
\|G(f\otimes (\omega_{\epsilon}\star g))\|_{L_{\infty}(0,T; L_{2})}\leq C\|f\|_{\mathcal{E}_T}\|g\|_{L_{\infty}(0,T; L_{2})}.
\end{equation}
From \cite{LRprioux}, it is seen that $\mathcal{E}_{T}$ is preserved by the operation of mollification:
\begin{equation}\label{kochtatarupathspacemollify}
\| \omega_{\epsilon}\star g\|_{\mathcal{E}_{T}}\leq C'\|g\|_{\mathcal{E}_{T}}.
\end{equation}
Here, $C'$ is independent of $T$ and $\epsilon$. Using this and (\ref{bilinbmo-1}), we obtain:
\begin{equation}\label{Gtensorest2bmo-1}
\|G(f\otimes (\omega_{\epsilon}\star g))\|_{\mathcal{E}_{T}}\leq C\|f\|_{\mathcal{E}_T}\|g\|_{\mathcal{E}_T}.
\end{equation}

We briefly describe successive approximations.
 For $n=1,2,\cdots$ let $v^{(0)}= S(t)u_0$, $$v^{(n+1)}= v^{(0)}+G(v^{(n)} \otimes (\omega_{\epsilon}\star v^{(n)})).$$
 Moreover for $n=0,1,\ldots$ define:
 \begin{equation}\label{Mdefbmo-1}
 M^{(n)}:=\|v^{(n)}\|_{\mathcal{E}_T}
 \end{equation}
 and 
 \begin{equation}\label{Kdefbmo-1}
 K^{(n)}:=\|v^{(n)}\|_{L_{\infty}(0,T; L_{2})}.
 \end{equation}
 Then using (\ref{Gtensorest1bmo-1}) and (\ref{Gtensorest2bmo-1}),  we have the following iterative relations:
 \begin{equation}\label{Miterativebmo-1}
 M^{(n+1)}\leq M^{(0)}+C(M^{(n)})^2
 \end{equation}
 and 
 \begin{equation}\label{Kiterativebmo-1}
 K^{(n+1)}\leq K^{(0)}+CM^{(n)}K^{(n)}.
\end{equation}
 If 
 \begin{equation}\label{Msmallnessconditionbmo-1}
 4CM^{(0)}< 1,
 \end{equation}
 then one can show that for $n=1,2\cdots$ we have
 \begin{equation}\label{Mboundbmo-1}
 M^{(n)}<2M^{(0)}
 \end{equation}
and
 \begin{equation}\label{Kboundbmo-1}
 K^{(n)}<2K^{(0)}.
\end{equation}
Using (\ref{Miterativebmo-1})-(\ref{Kboundbmo-1}) and arguing as in \cite{Kato}, we see that there exists $v^{\epsilon}\in L_{\infty}(0,T;L_{2})\cap\mathcal{E}_{T}$ such that
 \begin{equation}\label{converg1bmo-1}
 \lim_{n\rightarrow\infty}\|v^{n}-v^{\epsilon}\|_{\mathcal{E}_{T}}=0,
 \end{equation}
 \begin{equation}\label{converg2bmo-1}
 \lim_{n\rightarrow\infty}\|v^{n}-v^{\epsilon}\|_{L_{\infty}(0,T;L_{2})}=0
 \end{equation}
 and $v^{\epsilon}$ solves the integral equation 
 \begin{equation}\label{integraleqnmollified}
 v^{\epsilon}(x,t)= S(t)u_0+G(v^{\epsilon}\otimes (\omega_{\epsilon}\star v^{\epsilon}))(x,t)
 \end{equation}
 in $Q_{T}$. Define $$\pi_{v^{\epsilon}\otimes (\omega_{\epsilon}\star v^{\epsilon})}:= \mathcal{R}_{i}\mathcal{R}_{j}(v^{\epsilon}_{i}(\omega_{\epsilon}\star v^{\epsilon})_{j}),$$ where $\mathcal{R}_{i}$ denotes the Riesz transform and repeated indices are summed. 
 One can readily show that on $Q_{T}$,   $(v^{\epsilon}, \pi_{v^{\epsilon}\otimes (\omega_{\epsilon}\star v^{\epsilon})})$ are solutions to the mollified Navier-Stokes system:
 \begin{equation}\label{vepsiloneqn}
 \partial_{t} v^{\epsilon}-\Delta v^{\epsilon} +\rm{div}\,(v^{\epsilon}\otimes (\omega_{\epsilon}\star v^{\epsilon}))=-\nabla \pi_{v^{\epsilon}\otimes (\omega_{\epsilon}\star v^{\epsilon})},
 \end{equation}
 \begin{equation}\label{vepsilondivfree}
 \rm{div\,v^{\epsilon}}\,=0,
 \end{equation}
 \begin{equation}\label{vepsilonintialcondition}
 v^{\epsilon}(\cdot,0)=u_0(\cdot).
 \end{equation}
 We can also infer that 
 $v^{\epsilon}\in W^{1,0}_{2}(Q_T)\cap C([0,T]; J(\mathbb{R}^3))$, along with the energy equality
 \begin{equation}\label{vepsilonenergyest}
\|v^{\epsilon}(\cdot,t)\|_{L_{2}}^2+\int\limits_0^t\int\limits_{\mathbb{R}^3}|\nabla v^{\epsilon}(x,s)|^2 dxds= \|u_0\|^2_{L_2}.
 \end{equation} 
 
 \begin{itemize}
 \item[]\textbf{Step 2: passing to the limit in $\epsilon\rightarrow 0$}
 \end{itemize}
 Since $u_0 \in VMO^{-1}(\mathbb{R}^3)\cap J(\mathbb{R}^3)$, it is known from \cite{LRprioux} (specifically, Theorem 3.5 there) that there exists a $v\in \mathcal{E}_{T}$ such that
 \begin{equation}\label{convergmild}
 \lim_{\epsilon\rightarrow 0}\|v^{\epsilon}- v\|_{\mathcal{E}_{T}}=0
 \end{equation}
 with $v$ satisfying the integral equation (\ref{vintegeqnbmo-1}) in $Q_{T}$. Using arguments from \cite{Le} and (\ref{vepsiloneqn})-(\ref{vepsilonenergyest}), we see that $v^{\epsilon}$ will converge to a weak Leray-Hopf solution on $Q_{T}$ with initial data $u_0$. Thus, $v\in \mathcal{E}_{T}$ is a weak Leray-Hopf solution.

 The remaining conclusions of Theorem \ref{regularitycriticalbmo-1} follow from similar reasoning as in the proof of the statements of Theorem \ref{regularity}, hence we omit details of them.
 \end{proof}

\textbf{Acknowledgement.} The author wishes to warmly thank Kuijie Li, whose remarks on the first version of this paper led to an improvement to the statement of Theorem \ref{weakstronguniquenessBesov}.


\begin{thebibliography}{99}
\bibitem{bahourichemindanchin}
 Bahouri, H.; Chemin, J. Y.; Danchin, R. Fourier analysis and nonlinear partial differential equations. Grundlehren der Mathematischen Wissenschaften [Fundamental Principles of Mathematical Sciences], 343. Springer, Heidelberg, 2011. xvi+523 pp. ISBN: 978-3-642-16829-1

 

\bibitem{barkerser16}
Barker, T.; Seregin, G; \v Sver\'ak, V. On stability of weak Navier-Stokes solutions with large $L^{3,\infty}$ initial data. In preparation.
\bibitem{berghlofstrom}
Bergh,\!\!\! J.; {L\"ofstr\"om}, \!\!\!J. Interpolation spaces. An introduction. Grundlehren der Mathematischen Wissenschaften, No. 223. Springer-Verlag, Berlin-New York, 1976.
 
 \bibitem {CKN}
Caffarelli, L., Kohn, R.-V., Nirenberg, L., \emph{Partial regularity of
suitable weak solutions of the Navier-Stokes equations}, Comm. Pure
Appl. Math., Vol. XXXV (1982), pp. 771--831.
\bibitem{Calderon90}
Calderon, C. P. Existence of weak solutions for the Navier-Stokes equations with initial data in Lp. Trans. Amer. Math. Soc. 318 (1990), no. 1, 179–200. 
\bibitem{cannone}
Cannone, M. A generalization of a theorem by Kato on Navier-Stokes equations. Rev. Mat. Iberoamericana  13  (1997),  no. 3, 515–541.
\bibitem{chemin}
 Chemin, J. Y. About weak-strong uniqueness for the 3D incompressible Navier-Stokes system. Comm. Pure Appl. Math. 64 (2011), no. 12, 1587–1598.
 \bibitem{cheminplanchon}
 Chemin, J. Y.; Planchon, F. Self-improving bounds for the Navier-Stokes equations. Bull. Soc. Math. France 140 (2012), no. 4, 583–597 (2013).
 \bibitem{dongzhang}
  Dong, B. Q.; Zhang, Z. On the weak-strong uniqueness of Koch-Tataru's solution for the Navier-Stokes equations. J. Differential Equations 256 (2014), no. 7, 2406–2422.
\bibitem{dubois}
Dubois, S. Uniqueness for some Leray-Hopf solutions to the Navier-Stokes equations. J. Differential Equations  189  (2003),  no. 1, 99–147. 
\bibitem{ESS2003}
Escauriaza, L.; Seregin, G.; \v Sver\'ak, V. $L_{3,\infty}$-solutions of Navier-Stokes equations and backward uniqueness. (Russian) Uspekhi Mat. Nauk 58 (2003), no. 2(350), 3--44; translation in Russian Math. Surveys 58 (2003), no. 2, 211–250.

\bibitem{FJR}
Fabes, E. B.; Jones, B. F.; Riviere, N. M. The initial value problem for the Navier-Stokes equations with data in 
$L_p$. Arch. Rational Mech. Anal. 45 (1972), 222–240.

\bibitem{farwiggiga2016}
Farwig, R.; Giga, Y. Well-chosen Weak Solutions of the Instationary Navier-Stokes System and Their Uniqueness. TU Darmstadt Preprints, no. 2707, (2016) 
\bibitem{farwiggigahsu2014}
Farwig, R.; Giga, Y.; Hsu, P.-Y. Initial values for the Navier-Stokes equations in spaces with weights in time. Funkcialaj Ekvacioj, 59 (2016)
\bibitem{farwiggigahsu2016}
Farwig, R.; Giga, Y.; Hsu, P.-Y. The Navier-Stokes equations with initial values in Besov spaces of type $B_{q,\infty}^{−1+3/q}$
. TU Darmstadt Preprints, no. 2709, (2016)

\bibitem{fefferman}
Fefferman, C. L. Existence and Smoothness of the Navier-Stokes Equation. The Millennium Prize Problems, Clay Mathematics Institute, Cambridge, 57-67 (2006)
\bibitem{GP2000}
 Gallagher, I; Planchon, F. On global infinite energy solutions to the Navier-Stokes equations in two dimensions. Arch. Ration. Mech. Anal. 161 (2002), no. 4, 307–337.
 
 
  
  
  \bibitem{germain}
  Germain, P. Multipliers, paramultipliers, and weak-strong uniqueness for the Navier-Stokes equations. J. Differential Equations 226 (2006), no. 2, 373–428. 
  \bibitem{GigaMiy1989}
Giga, Y.; Miyakawa, T. Navier-Stokes flow in $\mathbb{R}^3$ with measures as initial vorticity and Morrey spaces. Comm. Partial Differential Equations  14  (1989),  no. 5, 577–618.
  
\bibitem{jiasverak}
Jia, H.; \v Sver\'ak, V. Minimal $L_3$-initial data for potential Navier-Stokes singularities. SIAM J. Math. Anal.  45  (2013),  no. 3, 1448–1459.
\bibitem{jiasverak2015}
Jia, H.; \v Sver\'ak V. Are the incompressible 3d Navier-Stokes equations locally ill-posed in the natural energy space? J. Funct. Anal. 268 (2015), no. 12, 3734–3766.
\bibitem{Kato} Kato, T. Strong Lp-solutions of the Navier-Stokes equation in $R^m$, with applications to weak solutions. Math. Z.  187  (1984),  no. 4, 471–480. 
\bibitem{KatoFujita}
Kato, T.; Fujita, H. On the nonstationary Navier-Stokes system. Rend. Sem. Mat. Univ. Padova 32 1962 243–260.




\bibitem{kochtataru}
Koch, H.; Tataru, D. Well-posedness for the Navier-Stokes equations. Adv. Math. 157 (2001), no. 1, 22–35.
 

\bibitem{KozYam1994}
 Kozono, H.; Yamazaki, M. Semilinear heat equations and the Navier-Stokes equation with distributions in new function spaces as initial data. Comm. Partial Differential Equations  19  (1994),  no. 5-6, 959–1014.

\bibitem {Le}
Leray, J. \emph{ Sur le mouvement d'un liquide visqueux emplissant
l'espace}, Acta Math. \textbf{63} (1934),  193--248.
\bibitem {LR1}
Lemarie-Rieusset, P. G., Recent developemnets in the Navier-Stokes
problem, Chapman\&Hall/CRC reseacrh notes in mathematics series,
431.
\bibitem{LRprioux}
Lemarie-Rieusset, P. G.; Prioux, N. The Navier-Stokes equations with data in $bmo^{-1}$.
. Nonlinear Anal. 70 (2009), no. 1, 280–297.
 \bibitem{O'Neil}
 O'Neil, R. Convolution operators and L(p,q) spaces. Duke Math. J. 30 1963 129–142.
\bibitem{Plan1996} 
 
 Planchon, F. Global strong solutions in Sobolev or Lebesgue spaces to the incompressible Navier-Stokes equations in $\mathbb{R}^3$. Ann. Inst. H. Poincaré Anal. Non Linéaire  13  (1996),  no. 3, 319–336.


\bibitem{prodi}
 Prodi, G. Un teorema di unicita per le equazioni di Navier-Stokes. (Italian) Ann. Mat. Pura Appl. (4) 48 1959 173–182. 


\bibitem{GregoryNote1}
 Seregin, G.,
Leray-Hopf solutions to Navier-Stokes equations with weakly converging initial data.  Mathematical aspects of fluid mechanics, 251--258,
London Math. Soc. Lecture Note Ser., 402, Cambridge Univ. Press, Cambridge, 2012.


\bibitem{sersve2016}
Seregin, G., \v Sver\'ak, V. On global weak solutions to the Cauchy problem for the Navier-Stokes equations with large $L_3$-initial data. 
Nonlinear Analysis, Theory, Methods and Applications. (2016) (in press)

\bibitem{Serrin}
 Serrin, J. The initial value problem for the Navier-Stokes equations. 1963 Nonlinear Problems (Proc. Sympos., Madison, Wis., 1962) pp. 69–98 Univ. of Wisconsin Press, Madison, Wis.

\bibitem{Sohr}
Sohr, H. A regularity class for the Navier-Stokes equations in Lorentz spaces. J. Evol. Equ. 1 (2001), no. 4, 441–467.





\bibitem{Taylor1992}
Taylor, M. E. Analysis on Morrey spaces and applications to Navier-Stokes and other evolution equations. Comm. Partial Differential Equations  17  (1992),  no. 9-10, 1407–1456.

\end{thebibliography}
\end{document}